\theoremstyle{plain}
  \newtheorem{thm}{Theorem}[section]
  \newtheorem{lem}[thm]{Lemma}
  \newtheorem{prop}[thm]{Proposition}
  \newtheorem{cor}[thm]{Corollary}
\theoremstyle{definition}
  \newtheorem{defn}[thm]{Definition}
  \newtheorem{exmp}[thm]{Example}
  \newtheorem{rem}[thm]{Remark}
\def\oarrowfill@#1#2#3#4#5{%
  $\m@th\thickmuskip0mu\medmuskip\thickmuskip\thinmuskip\thickmuskip
   \relax#5#1\mkern-7mu%
   \cleaders\hbox{$#5\mkern-2mu#2\mkern-2mu$}\hfill
   \mathclap{#3}\mathclap{#2}%
   \cleaders\hbox{$#5\mkern-2mu#2\mkern-2mu$}\hfill
   \mkern-7mu#4$%
}
\def\orightarrowfill@{%
  \oarrowfill@\relbar\relbar\circ\rightarrow}
\newcommand\xorightarrow[2][]{%
  \ext@arrow 0055{\orightarrowfill@}{#1}{#2}}
\newcommand{\da}{\downarrow}
\newcommand{\ua}{\uparrow}
\newcommand{\ra}{\rightarrow}
\newcommand{\la}{\leftarrow}
\newcommand{\Ra}{\rightarrow}
\newcommand{\La}{\leftarrow}
\newcommand{\lra}{\longrightarrow}
\newcommand{\lda}{\swarrow}
\newcommand{\rda}{\searrow}
\newcommand{\rat}{\!\rightarrowtail\!}
\newcommand{\olra}{\xorightarrow{\ \ \ \ }}
\newcommand{\bv}{\bigvee}
\newcommand{\bw}{\bigwedge}
\newcommand{\dv}{\dashv}
\newcommand{\rhu}{\rightharpoonup}
\newcommand{\ulc}{\ulcorner}
\newcommand{\urc}{\urcorner}
\newcommand{\ga}{\gamma}
\newcommand{\lam}{\lambda}
\newcommand{\CD}{\mathcal{D}}
\newcommand{\CF}{\mathcal{F}}
\newcommand{\CG}{\mathcal{G}}
\newcommand{\CK}{\mathcal{K}}
\newcommand{\CM}{\mathcal{M}}
\newcommand{\CP}{\mathcal{P}}
\newcommand{\CQ}{\mathcal{Q}}
\newcommand{\CT}{\mathcal{T}}
\newcommand{\CU}{\mathcal{U}}
\newcommand{\CV}{\mathcal{V}}
\newcommand{\BU}{{\bf U}}
\newcommand{\BY}{{\bf Y}}
\newcommand{\FD}{\frak{D}}
\newcommand{\bbA}{{\mathbb{A}}}
\newcommand{\bbB}{{\mathbb{B}}}
\newcommand{\bbC}{{\mathbb{C}}}
\newcommand{\bbX}{{\mathbb{X}}}
\newcommand{\sY}{{\sf Y}}
\newcommand{\skel}{{\sf skel}}
\newcommand{\colim}{{\rm colim}}
\newcommand{\dphi}{\phi^{\da}}
\newcommand{\uphi}{\phi_{\ua}}
\newcommand{\dpsi}{\psi^{\da}}
\newcommand{\upsi}{\psi_{\ua}}
\newcommand{\raphi}{\underline{\phi}}
\newcommand{\laphi}{\overline{\phi}}
\newcommand{\lapsi}{\overline{\psi}}
\newcommand{\CPd}{{\mathcal{P}}^{\dag}}
\newcommand{\PA}{\CP\bbA}
\newcommand{\PB}{\CP\bbB}
\newcommand{\PX}{\CP\bbX}
\newcommand{\PdA}{\CPd\bbA}
\newcommand{\PdB}{\CPd\bbB}
\newcommand{\sYd}{\sY^{\dag}}
\newcommand{\hF}{\ulc F \urc}
\newcommand{\hG}{\ulc G \urc}
\begin{document}

\title {Categories enriched over a quantaloid: Isbell adjunctions and Kan adjunctions}

\author{Lili Shen}
\address{Lili Shen\\School of Mathematics, Sichuan University, Chengdu 610064, China}
\email{scu@mickeylili.com}

\author{Dexue Zhang}
\address{Dexue Zhang\\School of Mathematics, Sichuan University, Chengdu 610064, China}
\email{dxzhang@scu.edu.cn}

\keywords{Quantaloid, $\mathcal{Q}$-distributor, complete $\mathcal{Q}$-category, $\mathcal{Q}$-closure space, Isbell adjunction, Kan adjunction}
\subjclass[2010]{18A40, 18D20}
\thanks{This work is supported by Natural Science Foundation of China (11071174).}

\begin{abstract}
Each distributor between categories enriched over a small quantaloid \(\mathcal{Q}\) gives rise to two adjunctions between the categories of contravariant and covariant presheaves, and hence to two monads. These two adjunctions are respectively generalizations of Isbell adjunctions and Kan extensions in category theory. It is proved that these two processes are functorial with infomorphisms playing as morphisms between distributors; and that the free cocompletion functor of \(\mathcal{Q}\)-categories factors through both of these functors.
\end{abstract}

\maketitle

\section{Introduction} \label{introduction}

A quantaloid \cite{Rosenthal1996,Stubbe_2005} is a category enriched over the symmetric monoidal closed category consisting of complete lattices and join-preserving functions. Since a quantaloid $\CQ$ is a closed and locally complete bicategory, one can develop a theory of categories enriched over $\CQ$ \cite{Benabou1967}. It should be stressed, that for such categories, coherence issues will not be a concern in most cases. For an overview of this theory the reader is referred to \cite{Heymans:2010:SQG:2049377,heymans2011symmetry,Stubbe_2005,Stubbe_2006}.

This paper is concerned with an extension of Isbell adjunctions and Kan extensions for $\CQ$-categories. In order to state the question clearly, we recall here Isbell adjunctions and Kan extensions in category theory.

Let $\bbA$ be a small category. The Isbell adjunction (or Isbell conjugacy) refers to the adjunction between ${\bf Set}^{\bbA^{\rm op}}$ and $({\bf Set}^{\bbA})^{\rm op}$ arising from the Yoneda embedding $\sY:\bbA\lra{\bf Set}^{\bbA^{\rm op}}$ and the co-Yoneda embedding $\sYd:\bbA\lra({\bf Set}^{\bbA})^{\rm op}$. Given  a functor $F:\bbA\lra\bbB$  between small categories, composition with $F$ induces a functor $-\circ F:{\bf Set}^{\bbB^{\rm op}}\lra{\bf Set}^{\bbA^{\rm op}}$. The functor $-\circ F$ has both a left and a right adjoint,  called respectively the left and the right Kan extension of $F$. Isbell adjunctions and Kan extensions have also been considered for categories enriched over a symmetric monoidal closed category  \cite{Borceux1994,Day2007651,kelly1982basic,kelly2005notes,Lawvere1973,lawvere1986taking}.

In this paper, it is shown that for a small quantaloid $\CQ$, each $\CQ$-distributor $\phi:\bbA\olra\bbB$ between $\CQ$-categories induces two adjunctions:
\begin{equation} \label{Isbell_intro}
\uphi\dv\dphi:\PA\rhu\PdB
\end{equation} and
\begin{equation} \label{Kan_intro}
\phi^*\dv\phi_*:\PB\rhu\PA,
\end{equation}
where $\PA$ and $\PdA$ are the counterparts of ${\bf Set}^{\bbA^{\rm op}}$ and $({\bf Set}^\bbA)^{\rm op}$, respectively.

If $\phi$ is the identity distributor on $\bbA$, then the adjunction $\uphi\dv\dphi$ reduces to the Isbell adjunction in \cite{Stubbe_2005}. Given a $\CQ$-functor $F:\bbA\lra\bbB$, consider the graph $F_\natural :\bbA\olra\bbB$ and the cograph $F^\natural :\bbB\olra\bbA$. Then it holds that (Theorem \ref{why_kan})
$$(F^{\natural})^*\dv (F^{\natural})_*= F^{\la}=(F_{\natural})^*\dv(F_{\natural})_*,$$
where $F^\la:\PB\lra\PA$ is the counterpart of the functor $-\circ F$ for $\CQ$-categories.

Therefore, the adjunctions (\ref{Isbell_intro}) and (\ref{Kan_intro}) extend the fundamental construction of  Isbell adjunctions and Kan extensions, so, they will be called Isbell adjunctions and Kan adjunctions by abuse of language. This paper is mainly concerned with the functoriality of these constructions.

For each $\CQ$-distributor $\phi:\bbA\olra\bbB$, the related Isbell adjunction and Kan adjunction give rise to a monad $\dphi\circ\uphi$ on $\PA$ (called a closure operator on $\bbA$ in this paper) and a monad $\phi_*\circ\phi^*$ on $\PB$, respectively. The correspondence
$$(\phi:\bbA\olra\bbB)\ \mapsto\ (\bbA,\dphi\circ\uphi)$$
is functorial from the category of $\CQ$-distributors and infomorphisms (defined below) to that of $\CQ$-closure spaces (a $\CQ$-category together with a closure operator) and continuous functors; and the correspondence
$$(\phi:\bbA\olra\bbB)\ \mapsto\ (\bbB,\phi_*\circ\phi^*)$$
defines a contravariant functor from the category of $\CQ$-distributors and infomorphisms  to that of $\CQ$-closure spaces. Furthermore,  the fixed points of the closure operator $\dphi\circ\uphi:\PA\lra\PA$ (or equivalently, all the algebras if we consider $\dphi\circ\uphi$ as a monad) constitute a complete $\CQ$-category $\CM(\phi)$; the fixed points of the closure operator $\phi_*\circ\phi^*:\PB\lra\PB$ also constitute a complete $\CQ$-category $\CK(\phi)$. Thus, each distributor $\phi:\bbA\olra\bbB$ generates two complete $\CQ$-categories: $\CM(\phi)$ and $\CK(\phi)$. It will be shown that $\CM$ is functorial and $\CK$ contravariant functorial  from the category of $\CQ$-distributors and infomorphisms to that of complete $\CQ$-categories and left adjoints. Moreover, the free cocompletion functor $\CP$ of $\CQ$-categories factors through both $\CM$ and $\CK$.

It should be pointed out that some conclusions in this paper have been proved, in the circumstance of concept lattices, in \cite{Shen2013166} for discrete $\CQ$-categories in the case that $\CQ$ is an one-object quantaloid, i.e., a unital quantale. The situation dealt with here is much more involved, and the method developed here allows for a wide range of applicability.

\section{Categories enriched over a quantaloid} \label{quantaloid}

The theory of categories enriched over a quantaloid has been studied systematically in \cite{Stubbe_2005,Stubbe_2006}. In this section, we recall some basic concepts and fix some notations that will be used in the sequel.

Complete lattices and join-preserving functions constitute a symmetric monoidal closed category {\bf Sup}. A {\it quantaloid} $\CQ$ is a {\bf Sup}-enriched category \cite{Rosenthal1996,Stubbe_2005}. Explicitly, a quantaloid $\CQ$ is a category with a class of objects $\CQ_0$ such that $\CQ(A,B)$ is a complete lattice for all $A,B\in\CQ_0$, and the composition $\circ$ of morphisms preserves joins in both variables, i.e.,
$$g\circ\Big(\bv_{i}f_i\Big)=\bv_{i}(g\circ f_i)\quad\text{and}\quad\Big(\bv_{j}g_j\Big)\circ f=\bv_{j}(g_j\circ f)$$
for all $f,f_i\in\CQ(A,B)$ and $g,g_j\in\CQ(B,C)$. The complete lattice $\CQ(A,B)$ has a top element $\top_{A,B}$ and a bottom element $\bot_{A,B}$.

In this paper, $\CQ$ is always assumed to be a small  quantaloid, i.e., $\CQ_0$ is a set.

For each $X\in\CQ_0$ and $f\in\CQ(A,B)$, both functions
$$-\circ f:\CQ(B,X)\lra\CQ(A,X):g\mapsto g\circ f,$$
$$f\circ -:\CQ(X,A)\lra\CQ(X,B):g\mapsto f\circ g$$
have respective right adjoints:
$$-\lda f:\CQ(A,X)\lra\CQ(B,X):g\mapsto g\lda f,$$
$$f\rda -:\CQ(X,B)\lra\CQ(X,A):g\mapsto f\rda g.$$
The operators $\lda$ and $\rda$ are respectively the {\it left} and {\it right implications}.

A {\it $\CQ$-category} \cite{Stubbe_2005} $\bbA$ consists of a set $\bbA_0$ equipped with a map $t:\bbA_0\lra\CQ_0:x\mapsto tx$ ($tx$ is called the {\it type} of $x$ and $\bbA_0$ is called a {\it $\CQ$-typed set}) and hom-arrows $\bbA(x,y)\in\CQ(tx,ty)$ such that
\begin{itemize}
\item[\rm (1)] $1_{tx}\leq\bbA(x,x)$ for all $x\in\bbA_0$;
\item[\rm (2)] $\bbA(y,z)\circ\bbA(x,y)\leq\bbA(x,z)$ for all $x,y,z\in\bbA_0$.
\end{itemize}

A {\it $\CQ$-functor} \cite{Stubbe_2005} $F:\bbA\lra\bbB$ between $\CQ$-categories is a map $F:\bbA_0\lra\bbB_0$ such that
\begin{itemize}
\item[\rm (1)] $F$ is {\it type-preserving} in the sense that $\forall x\in\bbA_0$, $tx=t(Fx)$;
\item[\rm (2)] $\forall x,x'\in\bbA_0$, $\bbA(x,x')\leq\bbB(Fx,Fx')$.
\end{itemize}

A $\CQ$-functor $F:\bbA\lra\bbB$ is {\it fully faithful} if $\bbA(x,x')=\bbB(Fx,Fx')$  for all $x,x'\in\bbA_0$. Bijective fully faithful $\CQ$-functors are exactly the isomorphisms in the category $\CQ$-{\bf Cat} of $\CQ$-categories and $\CQ$-functors.

A $\CQ$-category $\bbB$ is a (full) $\CQ$-subcategory of $\bbA$ if $\bbB_0$ is a subset of $\bbA_0$ and  $\bbB(x,y)=\bbA(x,y)$ for all $x,y\in\bbB_0$.

Given a $\CQ$-category $\bbA$, there is a natural underlying preorder $\leq$ on $\bbA_0$. For $x,y\in\bbA_0$, $x\leq y$ if and only if they are of the same type $tx=ty=A$ and $1_A\leq\bbA(x,y)$. Two objects $x,y$ in $\bbA$ are {\it isomorphic} if $x\leq y$ and $y\leq x$, written $x\cong y$. $\bbA$ is {\it skeletal} if no two different objects in $\bbA$ are isomorphic.

The underlying preorders on $\CQ$-categories induce an order between $\CQ$-functors:
$$F\leq G:\bbA\lra\bbB\iff\forall x\in\bbA_0,Fx\leq Gx\ \text{in}\ \bbB_0.$$
We denote $F\cong G:\bbA\lra\bbB$ if $F\leq G$ and $G\leq F$.

A pair of $\CQ$-functors $F:\bbA\lra\bbB$ and $G:\bbB\lra\bbA$ form an {\it adjunction} \cite{Stubbe_2005}, written $F\dv G:\bbA\rhu\bbB$, if $1_{\bbA}\leq G\circ F$ and $F\circ G\leq 1_{\bbB}$, where $1_\bbA$ and $1_\bbB$ are respectively the identity $\CQ$-functors on $\bbA$ and $\bbB$. In this case, $F$ is called a {\it left adjoint} of $G$ and $G$ a {\it right adjoint} of $F$.

A {\it $\CQ$-distributor} \cite{Stubbe_2005} $\phi:\bbA\olra\bbB$ between $\CQ$-categories is a map that assigns to each pair  $(x,y)\in\bbA_0\times\bbB_0$ a morphism $\phi(x,y)\in\CQ(tx,ty)$ in $\CQ$, such that
\begin{itemize}
\item[\rm (1)] $\forall x\in\bbA_0$, $\forall y,y'\in\bbB_0$, $\bbB(y',y)\circ\phi(x,y')\leq\phi(x,y)$;
\item[\rm (2)] $\forall x,x'\in\bbA_0$, $\forall y\in\bbB_0$, $\phi(x',y)\circ\bbA(x,x')\leq\phi(x,y)$.
\end{itemize}

$\CQ$-categories and $\CQ$-distributors constitute a quantaloid $\CQ$-{\bf Dist} \cite{Stubbe_2005} in which
\begin{itemize}
\item the local order is defined pointwise, i.e., for $\CQ$-distributors $\phi,\psi:\bbA\olra\bbB$,
    $$\phi\leq\psi\iff\forall x\in\bbA_0,\forall y\in\bbB_0,\phi(x,y)\leq\psi(x,y);$$
\item the composition $\psi\circ\phi:\bbA\olra\bbC$ of $\CQ$-distributors $\phi:\bbA\olra\bbB$ and $\psi:\bbB\olra\bbC$ is given by
    $$\forall x\in\bbA_0,\forall z\in\bbC_0,(\psi\circ\phi)(x,z)=\bv_{y\in\bbB_0}\psi(y,z)\circ\phi(x,y);$$
\item the identity $\CQ$-distributor on a $\CQ$-category $\bbA$ is the hom-arrows of $\bbA$ and will be denoted by $\bbA:\bbA\olra\bbA$;
\item for $\CQ$-distributors $\phi:\bbA\olra\bbB$, $\psi:\bbB\olra\bbC$ and $\eta:\bbA\olra\bbC$, the left implication $\eta\lda\phi:\bbB\olra\bbC$ and the right implication $\psi\rda\eta:\bbA\olra\bbB$ are given by
    $$\forall y\in\bbB_0,\forall z\in\bbC_0,(\eta\lda\phi)(y,z)=\bw_{x\in\bbA_0}\eta(x,z)\lda\phi(x,y)$$
    and
    $$\forall x\in\bbA_0,\forall y\in\bbB_0,(\psi\rda\eta)(x,y)=\bw_{z\in\bbC_0}\psi(y,z)\rda\eta(x,z).$$
\end{itemize}

An {\it adjunction} \cite{Stubbe_2005} in a quantaloid $\CQ$,  $f\dv g:A\rhu B$ in symbols,  is a pair of morphisms $f:A\lra B$ and $g:B\lra A$ in $\CQ$ such that $1_A\leq g\circ f$ and $f\circ g\leq 1_B$. In this case, $f$ is a left adjoint of $g$ and $g$ a right adjoint of $f$.  In particular, a pair of $\CQ$-distributors $\phi:\bbA\olra\bbB$ and $\psi:\bbB\olra\bbA$ form an adjunction $\phi\dv\psi:\bbA\rhu\bbB$ in the quantaloid $\CQ$-{\bf Dist} if $\bbA\leq\psi\circ\phi$ and $\phi\circ\psi\leq\bbB$.

Every $\CQ$-functor $F:\bbA\lra\bbB$ induces an adjunction $F_{\natural}\dv F^{\natural}:\bbA\rhu\bbB$ in $\CQ$-{\bf Dist} with $F_{\natural}(x,y)=\bbB(Fx,y)$ and $F^{\natural}(y,x)=\bbB(y,Fx)$ for all $x\in\bbA_0$ and $y\in\bbB_0$. The $\CQ$-distributors $F_{\natural}:\bbA\olra\bbB$ and $F^{\natural}:\bbB\olra\bbA$ are called the {\it graph} and {\it cograph} of $F$, respectively.
\begin{prop} {\rm\cite{Heymans:2010:SQG:2049377}} \label{graph_cograph_implication}
If $f\dv g:A\rhu B$ in a quantaloid $\CQ$, then the following identities hold for all $\CQ$-arrows $h,h'$ whenever the compositions and implications make sense:
\begin{itemize}
\item[\rm (1)] $h\circ f=h\lda g$, $g\circ h=f\rda h$.
\item[\rm (2)] $(f\circ h)\rda h'=h\rda(g\circ h')$, $(h'\circ f)\lda h=h'\lda(h\circ g)$.
\item[\rm (3)] $(h\rda h')\circ f=h\rda(h'\circ f)$, $g\circ(h'\lda h)=(g\circ h')\lda h$.
\item[\rm (4)] $g\circ(h\rda h')=(h\circ f)\rda h'$, $(h'\lda h)\circ f=h'\lda(g\circ h)$.
\end{itemize}
\end{prop}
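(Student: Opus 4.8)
The plan is to establish (1) directly from the two defining inequalities $1_A\leq g\circ f$ and $f\circ g\leq 1_B$, and then to deduce (2), (3) and (4) from (1) by transposing across the two adjunctions that come with the quantaloid structure: for every object $X$, the map $-\circ g$ is left adjoint to $-\lda g$ between $\CQ(A,X)$ and $\CQ(B,X)$, and $f\circ -$ is left adjoint to $f\rda -$ between $\CQ(X,A)$ and $\CQ(X,B)$.

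For the identity $h\circ f=h\lda g$ in (1), since $-\circ g$ is left adjoint to $-\lda g$ it suffices to verify the two inequalities $(h\circ f)\circ g\leq h$ and $h\lda g\leq h\circ f$. The first reads $(h\circ f)\circ g=h\circ(f\circ g)\leq h\circ 1_B=h$; the second reads $h\lda g\leq(h\lda g)\circ(g\circ f)=\big((h\lda g)\circ g\big)\circ f\leq h\circ f$, where we used $1_A\leq g\circ f$ and the counit inequality $(h\lda g)\circ g\leq h$. The identity $g\circ h=f\rda h$ is the left--right mirror image, proved the same way from $f\circ g\leq 1_B$, $1_A\leq g\circ f$ and the adjunction between $f\circ -$ and $f\rda -$. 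The conceptual point of (1) is that post-composition with $f$ coincides with the right adjoint $-\lda g$, and pre-composition with $g$ coincides with the right adjoint $f\rda -$; this furnishes adjoint descriptions of expressions built from $f$ and $g$ that are unavailable for arbitrary $\CQ$-arrows.

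Given (1), items (2)--(4) are routine transpositions using only associativity and the units/counits of the implication adjunctions. For the first identity in (2): $k\leq(f\circ h)\rda h'\iff f\circ(h\circ k)\leq h'\iff h\circ k\leq f\rda h'=g\circ h'\iff k\leq h\rda(g\circ h')$, the middle equality being (1). For the first identity in (3): by (1), $(h\rda h')\circ f=(h\rda h')\lda g$, and $k\leq(h\rda h')\lda g\iff(h\circ k)\circ g\leq h'\iff h\circ k\leq h'\lda g=h'\circ f\iff k\leq h\rda(h'\circ f)$. For the first identity in (4): by (1), $g\circ(h\rda h')=f\rda(h\rda h')$, and $k\leq f\rda(h\rda h')\iff(h\circ f)\circ k\leq h'\iff k\leq(h\circ f)\rda h'$. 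The second identity in each of (2)--(4) follows verbatim by interchanging the roles of $\lda$ and $\rda$ and of $f$ and $g$.

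I do not anticipate a genuine obstacle: the argument rests entirely on the two adjunction inequalities together with standard Galois-connection bookkeeping. The only thing requiring care is keeping straight which implication is right adjoint to which composition, on which side one composes, and that every transposition is applied to an arrow of the matching type.
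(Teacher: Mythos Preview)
Your proof is correct. Note, however, that the paper does not supply its own proof of this proposition: it is stated with a citation to Heymans' thesis and used as a black box. So there is nothing to compare against in the paper itself; you have filled in what the authors chose to import. Your argument---establishing (1) directly from the unit and counit inequalities of $f\dashv g$ and then reducing (2)--(4) to (1) via the transposition bijections of the $(-\circ g)\dashv(-\swarrow g)$ and $(f\circ-)\dashv(f\searrow-)$ adjunctions---is the standard one and is exactly what one would expect to find in the cited source.
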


The identities in Proposition \ref{graph_cograph_implication} will be frequently applied in the next sections to the adjunction $F_{\natural}\dv F^{\natural}:\bbA\rhu\bbB$ induced by a $\CQ$-functor $F:\bbA\lra\bbB$.

\begin{prop} {\rm\cite{Stubbe_2005}}
Let $F:\bbA\lra\bbB$ and $G:\bbB\lra\bbA$ be a pair of $\CQ$-functors. The following conditions are equivalent:
\begin{itemize}
\item[\rm (1)] $F\dv G:\bbA\rhu\bbB$.
\item[\rm (2)] $F_{\natural}=\bbB(F-,-)=\bbA(-,G-)=G^{\natural}$.
\item[\rm (3)] $G_{\natural}\dv F_{\natural}:\bbB\rhu\bbA$ in $\CQ$-{\bf Dist}.
\item[\rm (4)] $G^{\natural}\dv F^{\natural}:\bbA\rhu\bbB$ in $\CQ$-{\bf Dist}.
\end{itemize}
\end{prop}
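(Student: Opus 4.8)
\emph{Sketch of the argument I would give.} The plan is to route everything through condition (2), which is the concrete statement $\bbB(Fx,y)=\bbA(x,Gy)$ for all $x\in\bbA_0$, $y\in\bbB_0$ --- equivalently, the equality $F_\natural=G^\natural$ of $\CQ$-distributors $\bbA\olra\bbB$, since $\bbB(F-,-)=F_\natural$ and $\bbA(-,G-)=G^\natural$ by the definitions of graph and cograph. First I would prove (1)$\iff$(2) by a direct hom-arrow computation, and then obtain (2)$\iff$(3) and (2)$\iff$(4) formally.

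For (1)$\Rightarrow$(2): writing out $F\dv G$ as $1_{tx}\le\bbA(x,GFx)$ and $1_{ty}\le\bbB(FGy,y)$, I would get $\bbB(Fx,y)\le\bbA(x,Gy)$ by combining the $\CQ$-functoriality inequality $\bbB(Fx,y)\le\bbA(GFx,Gy)$ with the unit inequality and the transitivity of $\bbA$: $\bbB(Fx,y)\le\bbA(GFx,Gy)\circ\bbA(x,GFx)\le\bbA(x,Gy)$; the reverse inequality $\bbA(x,Gy)\le\bbB(Fx,y)$ is the mirror argument, using that $F$ is a $\CQ$-functor and $1_{ty}\le\bbB(FGy,y)$. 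For (2)$\Rightarrow$(1): I would specialise $\bbB(Fx,y)=\bbA(x,Gy)$ at $y=Fx$ to get $1_{tx}\le\bbB(Fx,Fx)=\bbA(x,GFx)$, i.e. $1_\bbA\le G\circ F$, and at $x=Gy$ to get $1_{ty}\le\bbA(Gy,Gy)=\bbB(FGy,y)$, i.e. $F\circ G\le 1_\bbB$.

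For the equivalence of (2) with (3) and (4), I would invoke the already-recorded fact that $H_\natural\dv H^\natural$ in $\CQ$-{\bf Dist} for every $\CQ$-functor $H$, applied both to $F$ (giving $F_\natural\dv F^\natural:\bbA\rhu\bbB$) and to $G$ (giving $G_\natural\dv G^\natural:\bbB\rhu\bbA$). Then (2), i.e. $F_\natural=G^\natural$, immediately yields $G_\natural\dv G^\natural=F_\natural$, which is (3), and $G^\natural=F_\natural\dv F^\natural$, which is (4). Conversely, using that adjoints in a quantaloid are unique --- if $\phi\dv\psi$ and $\phi\dv\psi'$ then $\psi'=1\circ\psi'\le(\psi\circ\phi)\circ\psi'=\psi\circ(\phi\circ\psi')\le\psi\circ 1=\psi$, and symmetrically $\psi\le\psi'$, so $\psi=\psi'$, and dually for left adjoints --- condition (3) forces the right adjoint $F_\natural$ of $G_\natural$ to coincide with $G^\natural$, and condition (4) forces the left adjoint $G^\natural$ of $F^\natural$ to coincide with $F_\natural$; either way one recovers (2).

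The only step involving real computation is (1)$\iff$(2), and there the only subtlety is keeping the directions of the quantaloidal inequalities straight (composition is monotone in both variables, hom-arrows compose subject to $\le$, and $\CQ$-functors satisfy $\bbA(x,x')\le\bbB(Fx,Fx')$). I do not expect a genuine obstacle; the remainder is formal bookkeeping with the uniqueness of adjoints in $\CQ$-{\bf Dist}.
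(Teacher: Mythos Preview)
Your argument is correct. The paper itself does not supply a proof of this proposition; it is quoted from \cite{Stubbe_2005} and stated without demonstration, so there is nothing to compare your approach against. Your route through (2) --- establishing (1)$\iff$(2) by the standard unit/counit-plus-functoriality computation, and then reading off (3) and (4) from $H_\natural\dv H^\natural$ together with uniqueness of adjoints in a quantaloid --- is the natural one and is exactly how one would expect the cited reference to proceed.
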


\begin{prop} \label{fully_faithful_graph_cograph}
Let $F:\bbA\lra\bbB$ be a $\CQ$-functor.
\begin{itemize}
\item[\rm (1)] If $F$ is fully faithful, then $F^{\natural}\circ F_{\natural}=\bbA$.
\item[\rm (2)] If $F$ is essentially surjective in the sense that there is some $x\in\bbA_0$ such that $Fx\cong y$ in $\bbB$ for all $y\in\bbB_0$, then $F_{\natural}\circ F^{\natural}=\bbB$.
\end{itemize}
\end{prop}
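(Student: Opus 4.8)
The plan is to start from the adjunction $F_{\natural}\dv F^{\natural}:\bbA\rhu\bbB$ in $\CQ$-{\bf Dist} recalled above, which already supplies the inequalities $\bbA\leq F^{\natural}\circ F_{\natural}$ and $F_{\natural}\circ F^{\natural}\leq\bbB$; in each part it then suffices to prove the reverse inequality by a pointwise computation using the definitions of graph, cograph and of composition in $\CQ$-{\bf Dist}.

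For (1), I would write out, for $x,x'\in\bbA_0$,
$$(F^{\natural}\circ F_{\natural})(x,x')=\bv_{y\in\bbB_0}F^{\natural}(y,x')\circ F_{\natural}(x,y)=\bv_{y\in\bbB_0}\bbB(y,Fx')\circ\bbB(Fx,y).$$
The composition axiom of the $\CQ$-category $\bbB$ gives $\bbB(y,Fx')\circ\bbB(Fx,y)\leq\bbB(Fx,Fx')$ for each $y\in\bbB_0$, so the whole join is $\leq\bbB(Fx,Fx')$, and $\bbB(Fx,Fx')=\bbA(x,x')$ because $F$ is fully faithful. Hence $F^{\natural}\circ F_{\natural}\leq\bbA$, and combining with the adjunction inequality yields $F^{\natural}\circ F_{\natural}=\bbA$.

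For (2), I would likewise expand, for $y,y'\in\bbB_0$,
$$(F_{\natural}\circ F^{\natural})(y,y')=\bv_{x\in\bbA_0}\bbB(Fx,y')\circ\bbB(y,Fx),$$
and then use essential surjectivity: given $y$, pick $x\in\bbA_0$ with $Fx\cong y$, so that $1_{ty}\leq\bbB(y,Fx)$ and $1_{ty}\leq\bbB(Fx,y)$ (note $t(Fx)=ty$). From $1_{ty}\leq\bbB(Fx,y)$ and the composition axiom, $\bbB(Fx,y')\geq\bbB(y,y')\circ\bbB(Fx,y)\geq\bbB(y,y')\circ 1_{ty}=\bbB(y,y')$; composing on the right with $\bbB(y,Fx)\geq 1_{ty}$ and using monotonicity of composition, $\bbB(Fx,y')\circ\bbB(y,Fx)\geq\bbB(y,y')\circ 1_{ty}=\bbB(y,y')$. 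So already the single term indexed by this $x$ dominates $\bbB(y,y')$, giving $\bbB\leq F_{\natural}\circ F^{\natural}$, and again the adjunction inequality finishes the argument.

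Neither part is hard — both are short, diagram-free calculations — but the points to watch are the bookkeeping of types and the order of composition in the quantaloid (so that every composite $g\circ f$ is legal), and ensuring the essential-surjectivity hypothesis is invoked precisely where it is needed, namely to exhibit one index whose term in the supremum already realizes the bound.
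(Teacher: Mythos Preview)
Your proof is correct and follows essentially the same approach as the paper: both parts are handled by the same pointwise computations, using full faithfulness in (1) and picking $x$ with $Fx\cong y$ in (2), together with the adjunction inequalities $\bbA\leq F^{\natural}\circ F_{\natural}$ and $F_{\natural}\circ F^{\natural}\leq\bbB$ for the trivial directions. The only cosmetic difference is that in (2) the paper replaces $\bbB(Fx,y')$ and $\bbB(y,Fx)$ directly by $\bbB(y,y')$ and $\bbB(y,y)$ using $Fx\cong y$, whereas you argue the needed inequality via monotonicity; these are interchangeable.
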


\begin{proof}
(1) If $F$ is fully faithful, then for all $x,x'\in\bbA_0$,
$$(F^{\natural}\circ F_{\natural})(x,x')=\bv_{y\in\bbB_0}\bbB(y,Fx')\circ\bbB(Fx,y)=\bbB(Fx,Fx')=\bbA(x,x').$$

(2) If $F$ is essentially surjective, then for all $y,y'\in\bbB_0$, there is some $x\in\bbA_0$ such that $Fx\cong y$. Thus
\begin{align*}
(F_{\natural}\circ F^{\natural})(y,y')&=\bv_{a\in\bbA_0}\bbB(Fa,y')\circ\bbB(y,Fa)\\
&\geq\bbB(Fx,y')\circ\bbB(y,Fx)\\
&=\bbB(y,y')\circ\bbB(y,y)\\
&\geq\bbB(y,y').
\end{align*}
Since $F_{\natural}\circ F^{\natural}\leq\bbB$ holds trivially, it follows that $F_{\natural}\circ F^{\natural}=\bbB$.
\end{proof}

Following \cite{Stubbe_2005}, for each $X\in\CQ_0$, write $*_X$ for the $\CQ$-category with only one object $*$ of type $t*=X$ and hom-arrow $1_X$.

A {\it contravariant presheaf} \cite{Stubbe_2005} on  a $\CQ$-category $\bbA$ is a $\CQ$-distributor $\mu:\bbA\olra *_X$  with $X\in\CQ_0$. Contravariant presheaves on a $\CQ$-category $\bbA$ constitute a $\CQ$-category $\PA$ in which
$$t\mu=X\quad\text{and}\quad\PA(\mu,\lam)=\lam\lda\mu$$
for all $\mu:\bbA\olra *_X$ and $\lam:\bbA\olra *_{Y}$ in $(\PA)_0$.

Dually,  a {\it covariant presheaf} on a $\CQ$-category $\bbA$ is a $\CQ$-distributor  $\mu:*_X\olra\bbA$. Covariant presheaves on $\bbA$ constitute a $\CQ$-category $\PdA$ in which
$$t\mu=X\quad\text{and}\quad\PdA(\mu,\lam)=\lam\rda\mu$$
for all $\mu:*_X\olra\bbA$ and $\lam:*_{Y}\olra\bbA$.

In particular, we denote $\CP(*_X)=\CP X$ and $\CPd (*_X)=\CPd X$ for each $X\in\CQ_0$.

\begin{rem} \label{PdA_QDist_order}
For each $\CQ$-category $\bbA$, it follows from the definition that the underlying preorder in $\PA$ coincides with the local order in $\CQ$-{\bf Dist}, while the underlying preorder in $\PdA$ is  the {\it reverse} local order in $\CQ$-{\bf Dist}. That is to say, for all $\mu,\lam\in\PdA$, we have
$$\mu\leq\lam\ \text{in}\ (\PdA)_0\iff\lam\leq\mu\ \text{in}\ \CQ\text{-}{\bf Dist}.$$
In order to get rid of the confusion about the symbol $\leq$, from now on we make the convention that the symbol $\leq$ between $\CQ$-distributors always denotes the local order in $\CQ$-{\bf Dist} if not otherwise specified.
\end{rem}

Given a $\CQ$-category $\bbA$ and $a\in\bbA_0$, write $\sY a$ for the $\CQ$-distributor
$$\bbA\olra *_{ta},\quad x\mapsto\bbA(x,a);$$
write $\sYd a$ for the $\CQ$-distributor
$$*_{ta}\olra\bbA,\quad x\mapsto\bbA(a,x).$$

The following  lemma implies that both $\sY:\bbA\lra\PA, a\mapsto\sY a$ and $\sYd:\bbA\lra\PdA, a\mapsto\sYd a$ are fully faithful $\CQ$-functors (hence embeddings if $\bbA$ is skeletal). Thus, $\sY$ and $\sYd$ are called respectively the {\it Yoneda embedding} and the {\it co-Yoneda embedding}.

\begin{lem}[Yoneda] \label{Yoneda_lemma} {\rm\cite{Stubbe_2005}}
$\PA(\sY a,\mu)=\mu(a)$ and $\PdA(\lam,\sYd a)=\lam(a)$
for all $a\in\bbA_0$, $\mu\in\PA$ and $\lam\in\PdA$.
\end{lem}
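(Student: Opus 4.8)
The plan is to reduce both equalities to the defining axioms of a $\CQ$-distributor together with the adjunctions $(-\circ f)\dv(-\lda f)$ and $(f\circ-)\dv(f\rda-)$ in the hom-lattices of $\CQ$. First I would unwind the left-hand sides. By construction of $\PA$ one has $\PA(\sY a,\mu)=\mu\lda\sY a$, and since $\sY a:\bbA\olra *_{ta}$ and $\mu:\bbA\olra *_{t\mu}$ the formula for the left implication of distributors identifies this with the single $\CQ$-arrow $\bw_{x\in\bbA_0}\big(\mu(x)\lda\bbA(x,a)\big)$ of $\CQ(ta,t\mu)$; dually, $\PdA(\lam,\sYd a)=\sYd a\rda\lam$ is identified, via the formula for the right implication, with $\bw_{x\in\bbA_0}\big(\bbA(a,x)\rda\lam(x)\big)$ in $\CQ(t\lam,ta)$. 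So it remains to prove $\bw_{x}\big(\mu(x)\lda\bbA(x,a)\big)=\mu(a)$ and $\bw_{x}\big(\bbA(a,x)\rda\lam(x)\big)=\lam(a)$.

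For the first equality, to obtain $\mu(a)\leq\mu(x)\lda\bbA(x,a)$ for every $x$ — which by the adjunction $(-\circ\bbA(x,a))\dv(-\lda\bbA(x,a))$ is equivalent to $\mu(a)\circ\bbA(x,a)\leq\mu(x)$ — one invokes axiom~(2) in the definition of the distributor $\mu$ (with $x'=a$); this gives $\mu(a)\leq\bw_{x}\big(\mu(x)\lda\bbA(x,a)\big)$. For the reverse inequality one bounds the meet above by its $x=a$ term: since $1_{ta}\leq\bbA(a,a)$ we get $h=h\circ 1_{ta}\leq h\circ\bbA(a,a)$ for any $h$, so $h\circ\bbA(a,a)\leq\mu(a)$ implies $h\leq\mu(a)$, and applying this to $h=\mu(a)\lda\bbA(a,a)$ yields $\mu(a)\lda\bbA(a,a)\leq\mu(a)$, hence $\bw_{x}\big(\mu(x)\lda\bbA(x,a)\big)\leq\mu(a)$. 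The second equality is the exact formal dual, run with the adjunction $(\bbA(a,x)\circ-)\dv(\bbA(a,x)\rda-)$, with axiom~(1) for $\lam$ in place of axiom~(2), and again with $1_{ta}\leq\bbA(a,a)$ applied on the $x=a$ term.

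There is no real obstacle here; the only points requiring care are the bookkeeping of types and the orientation of the implication operators $\lda,\rda$ — in particular that, by Remark~\ref{PdA_QDist_order} and because a covariant presheaf is a distributor pointing \emph{out of} the one-object category, $\PdA(\lam,\sYd a)$ unwinds to $\sYd a\rda\lam$ rather than to $\lam\rda\sYd a$. Finally I would record the advertised consequence by specialization: taking $\mu=\sY b$ gives $\PA(\sY a,\sY b)=(\sY b)(a)=\bbA(a,b)$ and taking $\lam=\sYd b$ gives $\PdA(\sYd b,\sYd a)=(\sYd b)(a)=\bbA(b,a)$, so $\sY$ and $\sYd$ are fully faithful $\CQ$-functors, as announced just before the lemma.
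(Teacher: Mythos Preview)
Your argument is correct: unwinding the implications and using the two distributor axioms together with $1_{ta}\leq\bbA(a,a)$ is exactly how one proves this, and the bookkeeping of types and of the direction of $\rda$ in $\PdA$ is handled properly. Note, however, that the paper does not actually give a proof of this lemma at all---it is quoted from \cite{Stubbe_2005}---so there is no in-paper argument to compare yours against; what you have written is the standard verification and would serve perfectly well as the omitted proof.
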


For each $\CQ$-distributor $\phi:\bbA\olra\bbB$ and $x\in\bbA_0,y\in\bbB_0$, write $\phi(x,-)$ for the $\CQ$-distributor  $\phi\circ\sYd_{\bbA}x: *_{tx}\olra\bbA\olra\bbB$; and write $\phi(-,y)$ for the $\CQ$-distributor ${\sf Y}_{\bbB}y\circ\phi:\bbA\olra\bbB\olra*_{ty}$.
Then the Yoneda lemma can be phrased as the commutativity of the following diagrams:
$$\bfig
\ptriangle/->`<-`<-/[\PA`*_{t\mu}`\bbA;\PA(-,\mu)`\sY_{\natural}`\mu] \place(250,500)[\circ]\place(0,250)[\circ]\place(250,250)[\circ]
\ptriangle(1200,0)/<-`->`->/[\PdA`*_{t\lam}`\bbA;\PdA(\lam,-)`(\sYd)^{\natural}`\lam] \place(1450,500)[\circ]\place(1200,250)[\circ]\place(1450,250)[\circ]
\efig$$
That is, $\mu=\PA(\sY-,\mu)$ and $\lam=\PdA(\lam,\sYd-)$.

\begin{rem} \label{distributor_notion}
Given $\CQ$-distributors $\phi:\bbA\olra\bbB$, $\psi:\bbB\olra\bbC$ and $\eta:\bbA\olra\bbC$, one can form $\CQ$-distributors such as $\phi(x,-),\eta\lda\phi,\eta\rda\psi(y,-)$,  etc.
We list here some basic  formulas related to these $\CQ$-distributors that will be used in the sequel.
$$\forall x\in\bbA_0,\forall z\in\bbC_0,(\psi\circ\phi)(x,z)=\psi(-,z)\circ\phi(x,-);$$
$$\forall x\in\bbA_0,(\psi\circ\phi)(x,-)=\psi\circ\phi(x,-);$$
$$\forall y\in\bbB_0, (\eta\lda\phi)(y,-)=\eta\lda\phi(-,y);$$
$$\forall x\in\bbA_0,\forall y\in\bbB_0,(\psi\rda\eta)(x,y)=\psi(y,-)\rda\eta(x,-).$$
\end{rem}

For a $\CQ$-functor $F:\bbA\lra\bbB$ between $\CQ$-categories, define $\CQ$-functors $F^{\ra}:\PA\lra\PB$ and $F^{\la}:\PB\lra\PA$ by $F^{\ra}(\mu)=\mu\circ F^{\natural}$ and $F^{\la}(\lam)=\lam\circ F_{\natural}$. Then
$$F^{\ra}\dv F^{\la}:\PA\rhu\PB$$
in $\CQ$-{\bf Cat}. For all $\lam\in\PB$ and $x\in\bbA_0$, it can be verified that
\begin{equation} \label{F_la_lam_Fx}
F^{\la}(\lam)(x)=\lam(Fx)\in \CQ(tx,t\lam).
\end{equation}

Dually, we may also define $\CQ$-functors $F^{\Ra}:\PdA\lra\PdB$ and $F^{\La}:\PdB\lra\PdA$ by $F^{\Ra}(\mu)=F_{\natural}\circ\mu$ and $F^{\La}(\lam)=F^{\natural}\circ\lam$. Then $$F^{\La}\dv F^{\Ra}:\PdB\rhu\PdA$$
in $\CQ$-{\bf Cat}. For all $\lam\in\PdB$ and $x\in\bbA_0$, it can be verified that
\begin{equation} \label{F_La_lam_Fx}
F^{\La}(\lam)(x)=\lam(Fx)\in\CQ(t\lam, tx).
\end{equation}

Note that the symbol $F^\ra$ is used for both of the $\CQ$-functors $\PA\lra\PB$ and $\PdA\lra\PdB$. This should cause no confusion since it can be easily detected from the context which one it stands for. So is the symbol $F^\la$.

We would like to stress that
\begin{equation} \label{Fla_Fra_adjuntion}
\mu\leq F^{\la}\circ F^{\ra}(\mu)\quad\text{and}\quad F^{\ra}\circ F^{\la}(\lam)\leq\lam
\end{equation}
for all $\mu\in\PA$ and $\lam\in\PB$; whereas
\begin{equation} \label{FLa_FRa_adjuntion}
\nu\leq F^{\La}\circ F^{\Ra}(\nu)\quad\text{and}\quad F^{\Ra}\circ F^{\La}(\ga)\leq\ga
\end{equation}
for all $\nu\in\PdA$ and $\gamma\in\PdB$ by Remark \ref{PdA_QDist_order}.

For a $\CQ$-functor $F:\bbA\lra\bbB$ and a contravariant presheaf $\mu\in\PA$, the {\it colimit of $F$ weighted by $\mu$} \cite{Stubbe_2005} is an object $\colim_{\mu}F\in\bbB_0$  (necessarily of type $t\mu$) such that
$$\bbB({\colim}_{\mu}F,-)=F_{\natural}\lda\mu.$$

Dually, for a covariant presheaf $\lam\in\PdA$, the {\it limit of $F$ weighted by $\lam$} is an object $\lim_{\lam}F\in\bbB_0$ (necessarily of type $t\lam$) such that
$$ \bbB(-,{\lim}_{\lam}F)=\lam\rda F^{\natural}.$$

A $\CQ$-category $\bbB$ is {\it cocomplete} (resp. {\it complete}) if $\colim_{\mu}F$ (resp. $\lim_{\lam}F$) exists for each $\CQ$-functor $F:\bbA\lra\bbB$ and $\mu\in\PA$ (resp. $\lam\in\PdA$).

In particular, for a $\CQ$-category $\bbA$  and $\mu\in\PA$ (resp. $\lam\in\PdA$), the colimit $\colim_{\mu}1_{\bbA}$ (the limit ${\lim}_{\lam}1_{\bbA}$, resp.) exists if there is some $a\in\bbA_0$ such that
$$\bbA(a,-)=\bbA\lda\mu\quad(\text{resp.}\ \bbA(-,a)=\lam\rda\bbA).$$
In this case, we say that $a$ is a {\it supremum} of $\mu$ (resp. an {\it infimum} of $\lam$), and denote it by $\sup\mu$ (resp. $\inf\lam$). Note that for any $\CQ$-functor $F:\bbA\lra\bbB$ and $\mu\in\PA$  (resp. $\lam\in\PdA$),
$$\colim_{\mu}F={\sup}_{\bbB}F^{\ra}(\mu)\quad(\text{resp.}\ {\lim}_{\lam}F={\inf}_{\bbB}F^{\ra}(\lam))$$
when it exists.

Let $\bbA$ be a $\CQ$-category. For $x\in\bbA_0$ and $f\in\CP(tx)$ (resp. $f\in\CPd(tx)$), the {\it tensor} (resp. {\it cotensor}) \cite{Stubbe_2006} of $f$ and $x$, denoted by $f\otimes x$ (resp. $f\rat x$), is an object in $\bbA_0$ of type $t(f\otimes x)=tf$ (resp. $t(f\rat x)=tf$) such that
$$\bbA(f\otimes x,-)=\bbA(x,-)\lda f\quad(\text{resp}.\ \bbA(-,f\rat x)=f\rda\bbA(-,x)).$$

For $x\in\bbA_0$ and $f\in\CP(tx)$, it is easily seen that the tensor $f\otimes x$ is exactly the supremum of $f\circ\sY x\in\PA$ if it exists. Dually, for $y\in\bbA_0$ and $g\in\CPd(ty)$, the cotensor $g\rat y$ is the infimum of $\sYd y\circ g\in\PdA$ if it exists.

A $\CQ$-category $\bbA$ is said to be {\it tensored} (resp. {\it cotensored}) if the tensor $f\otimes x$ (resp. the cotensor $f\rat x$) exists for all choices of $x$ and $f$.

\begin{exmp} \label{PA_tensor}
Let $\bbA$ be a $\CQ$-category.
\begin{itemize}
\item[\rm (1)] $\PA$ is a tensored and cotensored $\CQ$-category in which
$$f\otimes\mu=f\circ\mu,\quad g\rat\mu=g\rda\mu$$
for all $\mu\in\PA$ and $f\in\CP(t\mu)$, $g\in\CPd(t\mu)$.
\item[\rm (2)] $\PdA$ is a tensored and cotensored $\CQ$-category in which
$$f\otimes\lam=\lam\lda f,\quad g\rat\lam=\lam\circ g$$
for all $\lam\in\PdA$ and $f\in\CP(t\lam)$, $g\in\CPd(t\lam)$.
\end{itemize}
\end{exmp}

Let $\bbA$ be a $\CQ$-category and $X\in\CQ_0$. The objects in $\bbA$ with type $X$ constitute a subset of the underlying preordered set $\bbA_0$ and we denote it by $\bbA_X$. A $\CQ$-category $\bbA$ is said to be {\it order-complete} \cite{Stubbe_2006} if each $\bbA_X$ admits all joins  in the underlying preorder.

For each subset $\{x_i\}\subseteq\bbA_X$, if the join (resp. meet) of $\{x_i\}$ in $\bbA_X$ exists, then
$$\bv_i x_i=\sup\bv_i\sY x_i\quad\Big(\text{resp.}\quad\bw_i x_i=\inf\bw_i\sYd x_i\Big),$$
where  $\displaystyle\bv_i x_i$ and $\displaystyle\bw_i x_i$ denote respectively the join and the meet  in $\bbA_X$;  $\displaystyle\bv_i\sY x_i$ denotes the join in $(\PA)_X$ and $\displaystyle\bw_i\sYd x_i$ the meet in $(\PdA)_X$.

\begin{thm} {\rm\cite{Stubbe_2005,Stubbe_2006}} \label{complete_cocomplete_equivalent}
For a $\CQ$-category $\bbA$, the following conditions are equivalent:
\begin{itemize}
\item[\rm (1)] $\bbA$ is complete.
\item[\rm (2)] $\bbA$ is cocomplete.
\item[\rm (3)] $\bbA$ is tensored, cotensored, and order-complete.
\item[\rm (4)] Each $\mu\in\PA$ has a supremum.
\item[\rm (5)] $\sY$ has a left adjoint in $\CQ$-{\bf Cat}, given by $\sup:\PA\lra\bbA$.
\item[\rm (6)] Each $\lam\in\PdA$ has an infimum.
\item[\rm (7)] $\sYd$ has a right adjoint in $\CQ$-{\bf Cat}, given by $\inf:\PdA\lra\bbA$.
\end{itemize}
In this case, for each $\mu\in\PA$ and $\lam\in\PdA$,
$$\sup\mu=\bv_{a\in\bbA_0}(\mu(a)\otimes a),\quad\inf\lam=\bw_{a\in\bbA_0}(\lam(a)\rat a),$$
where $\displaystyle\bv$ and $\displaystyle\bw$ denote respectively the join in $\bbA_{t\mu}$ and the meet in $\bbA_{t\lam}$.
\end{thm}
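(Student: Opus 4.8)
The plan is to separate the seven conditions into the ``formal'' equivalences $(1)\Leftrightarrow(6)$, $(2)\Leftrightarrow(4)$, $(4)\Leftrightarrow(5)$, $(6)\Leftrightarrow(7)$, which are bookkeeping, and the substantive equivalence $(3)\Leftrightarrow(4)$, whose heart is the self-dual assertion $(4)\Leftrightarrow(6)$ (``cocomplete iff complete''). The two explicit formulas drop out of the proof of $(3)\Rightarrow(4)$ and of its dual $(3)\Rightarrow(6)$, and hence hold for any $\bbA$ satisfying any of the seven conditions, since each implies $(3)$.

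For the formal part, $(2)\Leftrightarrow(4)$ and $(1)\Leftrightarrow(6)$ are immediate from the identities $\colim_\mu F=\sup F^{\ra}(\mu)$ and $\lim_\lam F=\inf F^{\ra}(\lam)$ recorded above: one direction specialises $F=1_{\bbA}$, the other pushes a weight along $F^{\ra}$ into $\PA$ (resp. $\PdA$). For $(4)\Leftrightarrow(5)$: the pointwise suprema assemble into a type-preserving map $\sup\colon(\PA)_0\lra\bbA_0$, and unwinding the definitions gives $\bbA(\sup\mu,a)=\sY a\lda\mu=\PA(\mu,\sY a)$ for all $\mu,a$; a standard lemma — a type-preserving map satisfying such a hom-equality against a $\CQ$-functor is automatically a left-adjoint $\CQ$-functor, with functoriality and the unit $1\leq\sY\circ\sup$ both coming from $\bbA(\sup\mu,\sup\mu)\geq 1$ together with monotonicity of the hom in its second variable — upgrades this to $\sup\dv\sY$; conversely a left adjoint to $\sY$ computes all suprema. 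The case $(6)\Leftrightarrow(7)$ is dual.

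For $(3)\Rightarrow(4)$: given $\mu\in\PA$ of type $X$, tensoredness makes every $\mu(a)\otimes a$ an object of $\bbA_X$, order-completeness forms $s:=\bv_{a\in\bbA_0}(\mu(a)\otimes a)\in\bbA_X$, and one verifies $\bbA(s,-)=\bbA\lda\mu$ by combining the tensor identity $\bbA(f\otimes a,-)=\bbA(a,-)\lda f$ with the ``hom turns conical joins into meets'' identity $\bbA(\bv_i x_i,b)=\bw_i\bbA(x_i,b)$ — whose nontrivial inequality is precisely where cotensoredness enters, via the cotensor $(\bw_i\bbA(x_i,b))\rat b$. This yields $(4)$ and the stated formula for $\sup\mu$; the formula for $\inf\lam$ is the mirror image, from $(3)\Rightarrow(6)$. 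For $(4)\Rightarrow(3)$: tensoredness is $f\otimes x=\sup(f\circ\sY x)$ and order-completeness is $\bv_i x_i=\sup(\bv_i\sY x_i)$ with the join computed pointwise in the complete lattice $(\PA)_X$, both routine from the definition of $\sup$; the one missing ingredient, cotensoredness, I obtain only after $(4)\Leftrightarrow(6)$, through $g\rat x=\inf(\sYd x\circ g)$.

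Everything therefore rests on $(4)\Leftrightarrow(6)$, which I would prove via the Isbell conjugacy attached to the identity distributor $\bbA$. Define, on objects, $\mathrm{u}(\mu)=\bbA\lda\mu\in\PdA$ and $\mathrm{d}(\lam)=\lam\rda\bbA\in\PA$; in the order of $\CQ$-{\bf Dist} one has $\lam\leq\bbA\lda\mu\iff\lam\circ\mu\leq\bbA\iff\mu\leq\lam\rda\bbA$, so $\mathrm{u},\mathrm{d}$ form an antitone Galois connection, whence $\mathrm{d}\,\mathrm{u}\,\mathrm{d}=\mathrm{d}$; a short computation with the composition axiom gives $\mathrm{d}(\sYd a)=\sY a$ and $\mathrm{u}(\sY a)=\sYd a$; and directly from the definitions $a=\sup\mu\iff\sYd a=\mathrm{u}(\mu)$ and $b=\inf\lam\iff\sY b=\mathrm{d}(\lam)$. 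Now assume $(4)$: for $\lam\in\PdA$ the presheaf $\mathrm{d}(\lam)\in\PA$ has a supremum $a$, so $\sYd a=\mathrm{u}(\mathrm{d}\lam)$, and therefore $\sY a=\mathrm{d}(\sYd a)=\mathrm{d}\,\mathrm{u}\,\mathrm{d}(\lam)=\mathrm{d}(\lam)$, i.e.\ $a=\inf\lam$; hence $(6)$, and $(6)\Rightarrow(4)$ is symmetric. This closes the loop: $(4)\Leftrightarrow(6)$ supplies the missing cotensors in $(4)\Rightarrow(3)$, while $(2)\Leftrightarrow(4)\Leftrightarrow(5)$, $(1)\Leftrightarrow(6)\Leftrightarrow(7)$ and $(3)\Leftrightarrow(4)$ together give the full list, and the two formulas hold since each condition implies $(3)$. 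The genuinely delicate point is $(4)\Leftrightarrow(6)$ — equivalently, manufacturing cotensors out of colimits — and the Galois-connection argument above is what makes it short; alternatively one may invoke that $\PA$ is itself complete (limits being pointwise) and that $\sup\dv\sY$ presents $\bbA$ as a reflective retract of $\PA$.
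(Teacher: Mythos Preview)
The paper does not prove this theorem: it is stated with a citation to \cite{Stubbe_2005,Stubbe_2006} and no proof is supplied. So there is no ``paper's own proof'' to compare your proposal against.

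That said, your argument is correct and well organised. The bookkeeping equivalences $(1)\Leftrightarrow(6)$, $(2)\Leftrightarrow(4)$, $(4)\Leftrightarrow(5)$, $(6)\Leftrightarrow(7)$ are handled cleanly; the computation in $(3)\Rightarrow(4)$ is right, including the observation that cotensors are what make $\bbA(\bv_i x_i,b)=\bw_i\bbA(x_i,b)$ hold in the nontrivial direction. Your treatment of the crux $(4)\Leftrightarrow(6)$ via the Isbell pair $\mathrm{u}(\mu)=\bbA\lda\mu$, $\mathrm{d}(\lam)=\lam\rda\bbA$ is elegant and fully valid: the identities $\mathrm{d}(\sYd a)=\sY a$, $\mathrm{u}(\sY a)=\sYd a$ follow from the composition axiom (take $y=a$ for one inequality), and $\mathrm{d}\,\mathrm{u}\,\mathrm{d}=\mathrm{d}$ then transports suprema to infima exactly as you describe. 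It is worth noting that this very adjunction $\bbA\lda(-)\dashv(-)\rda\bbA$ reappears later in the paper as Corollary~\ref{ub_lb_adjunction}, so your approach is in harmony with the surrounding machinery; and the alternative you sketch at the end, exhibiting $\bbA$ as a reflective sub-$\CQ$-category of the complete $\PA$ via $\sup\dv\sY$, is essentially the route taken in Stubbe's cited papers. One minor remark: when you derive order-completeness in $(4)\Rightarrow(3)$ you only need joins in each $\bbA_X$, and that is exactly what the paper's definition of order-complete demands; the meets needed for the $\inf$ formula then come for free since a preorder with all joins has all meets.
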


\begin{exmp} \label{PX_PA_complete}
Let $\bbA$ be a $\CQ$-category.
\begin{itemize}
\item[\rm (1)] $\PA$ is a complete $\CQ$-category in which
$$\sup\Phi=\bv_{\mu\in\PA}\Phi(\mu)\circ\mu=\Phi\circ(\sY_{\bbA})_{\natural}$$
$$\bfig
\ptriangle/->`<-`<-/[\PA`*_{t\Phi}`\bbA; \Phi`(\sY_{\bbA})_{\natural}`\sup\Phi] \place(250,500)[\circ] \place(0,250)[\circ]\place(250,250)[\circ]
\efig$$
for all $\Phi\in\CP(\PA)$ \cite{Stubbe_2005} and
$$\inf\Psi=\bw_{\mu\in\PA}\Psi(\mu)\rda\mu=\Psi\rda(\sY_{\bbA})_{\natural}$$
$$\bfig
\btriangle[\bbA`*_{t\Psi}`\PA; \inf\Psi`(\sY_{\bbA})_{\natural}`\Psi]
\place(250,0)[\circ]\place(0,250)[\circ] \place(250,250)[\circ] \place(125,125)[\twoar(1,1)]
\efig$$
for all $\Psi\in\CPd(\PA)$, i.e., $\inf\Psi$ is the largest $\CQ$-distributor $\mu:\bbA\olra *_{t\Psi}$ such that $\Psi\circ\mu\leq (\sY_{\bbA})_{\natural}$.
\item[\rm (2)] $\PdA$ is a complete $\CQ$-category in which
$$\sup\Phi=(\sYd_{\bbA})^{\natural}\lda\Phi\quad\text{and} \quad\inf\Psi=(\sYd_{\bbA})^{\natural}\circ\Psi$$
for all $\Phi\in\CP(\PdA)$ and $\Psi\in\CPd(\PdA)$.
\end{itemize}

In particular, $\CP X$ and $\CPd X$ are both complete $\CQ$-categories for all $X\in\CQ_0$.
\end{exmp}

\begin{prop} \label{F_la_ra_condition} {\rm\cite{Stubbe_2006}}
Let $F:\bbA\lra\bbB$ be a $\CQ$-functor between $\CQ$-categories, with $\bbA$ complete, then $F$ is a left (resp. right) adjoint in $\CQ$-{\bf Cat} if and only if
\begin{enumerate}
\item[{\rm (1)}] $F$ preserves tensors (resp. cotensors) in the sense that $F(f\otimes_{\bbA}x)= f\otimes_{\bbB}Fx$ (resp. $F(f\rat_{\bbA}x)=f\rat_{\bbB}Fx$) for all $x\in\bbA_0$ and $f\in\CP(tx)$ (resp. $f\in\CPd(tx)$).
\item[{\rm (2)}] For all $X\in\CQ_0$, $F:\bbA_X\lra\bbB_X$ preserves arbitrary joins (resp. meets).
\end{enumerate}
\end{prop}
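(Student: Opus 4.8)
The plan is to establish the ``left adjoint'' equivalence directly and then obtain the ``right adjoint'' one by passing to opposite $\CQ$-categories. For the ``only if'' direction I would argue that a left adjoint $F\dv G$ automatically satisfies (1) and (2). Since $\CQ$-functors preserve the underlying preorders and are type-preserving, restricting the inequalities $1_\bbA\le G\circ F$ and $F\circ G\le 1_\bbB$ to objects of a fixed type $X$ exhibits $F:\bbA_X\lra\bbB_X$, as a map of preordered sets, as a left adjoint of $G:\bbB_X\lra\bbA_X$; hence $F$ preserves joins in every $\bbA_X$, which is (2). For (1), writing $F_\natural=\bbB(F-,-)=\bbA(-,G-)=G^\natural$, one computes $\bbB(F(f\otimes_\bbA x),y)=\bbA(f\otimes_\bbA x,Gy)=\bbA(x,Gy)\lda f=\bbB(Fx,y)\lda f$ for all $y\in\bbB_0$, so that $\bbB(F(f\otimes_\bbA x),-)=\bbB(Fx,-)\lda f$, which is precisely the universal property defining the tensor $f\otimes_\bbB Fx$; hence that tensor exists in $\bbB$ and equals $F(f\otimes_\bbA x)$.

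The substance is the ``if'' direction, where completeness of $\bbA$ enters. Assuming (1) and (2), I would produce the right adjoint explicitly as the composite
$$G:=\sup\nolimits_\bbA\circ\,F^\la\circ\sY_\bbB:\bbB\lra\PB\lra\PA\lra\bbA,$$
which is a $\CQ$-functor since $\sup_\bbA:\PA\lra\bbA$ exists by Theorem \ref{complete_cocomplete_equivalent} and each factor is a $\CQ$-functor. Because $F^\la(\sY_\bbB y)(a)=\bbB(Fa,y)$, the supremum formula in Theorem \ref{complete_cocomplete_equivalent} gives $Gy=\bv_{a\in\bbA_0}(\bbB(Fa,y)\otimes_\bbA a)$, a join taken in $\bbA_{ty}$. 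It then remains to verify the two adjunction inequalities. For $F\circ G\le 1_\bbB$: apply $F$, use (2) to move $F$ inside the join and (1) to rewrite each summand, obtaining $F(Gy)=\bv_a(\bbB(Fa,y)\otimes_\bbB Fa)$; since $g\otimes_\bbB b\le c$ iff $g\le\bbB(b,c)$, each summand lies below $y$, whence $F(Gy)\le y$. For $1_\bbA\le G\circ F$: the summand of $G(Fx)$ at $a=x$ is $\bbB(Fx,Fx)\otimes_\bbA x$, which dominates $1_{tx}\otimes_\bbA x\cong x$ by monotonicity of $-\otimes_\bbA x$ together with $1_{tx}\le\bbB(Fx,Fx)$; so $x\le G(Fx)$. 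This yields $F\dv G$.

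Finally, I would deduce the ``right adjoint'' statement by duality: applying the proven equivalence to $F^{\op}:\bbA^{\op}\lra\bbB^{\op}$ over $\CQ^{\op}$ interchanges left with right adjoints, tensors with cotensors, and joins with meets, and preserves completeness (again by Theorem \ref{complete_cocomplete_equivalent}, since $\bbA$ is complete iff $\bbA^{\op}$ is). I expect the only delicate point to be the ``if'' direction, namely checking that the $G$ manufactured from $\sup_\bbA$ really is right adjoint to $F$; this is exactly where both hypotheses are consumed, (2) to commute $F$ past the join defining $Gy$ and (1) to recognize the resulting tensors as taken in $\bbB$. The forward direction and the bookkeeping with types and implications are routine.
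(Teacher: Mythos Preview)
The paper does not prove this proposition; it is cited from \cite{Stubbe_2006} without proof. So there is no ``paper's own proof'' to compare against.

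That said, your argument is correct and is the standard one. The ``only if'' direction is routine: the hom-set identity $\bbB(F-,-)=\bbA(-,G-)$ immediately yields preservation of tensors, and the restriction to each $\bbA_X$ gives an order-theoretic left adjoint, hence preservation of joins. For the ``if'' direction, your candidate $G=\sup_\bbA\circ F^\la\circ\sY_\bbB$ is exactly the right one; the formula $Gy=\bigvee_{a}\bbB(Fa,y)\otimes_\bbA a$ from Theorem~\ref{complete_cocomplete_equivalent} makes the two triangle inequalities transparent, and you correctly identify where each hypothesis is used: (2) to push $F$ through the join, (1) to convert $\bbA$-tensors into $\bbB$-tensors, after which $\bbB(Fa,y)\otimes_\bbB Fa\le y$ is the unit of the tensor--hom adjunction in $\bbB$. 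The only small point worth making explicit is that conditions (1) and (2) are being read in the strong sense (the tensor $f\otimes_\bbB Fx$ and the relevant joins in $\bbB$ \emph{exist} and are computed by $F$), which is how the statement is intended and how you use it. The duality step is fine.
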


\begin{cor} {\rm\cite{Stubbe_2006}} \label{left_adjoint_preserves_sup}
Let $F:\bbA\lra\bbB$ be a $\CQ$-functor between $\CQ$-categories, with $\bbA$ complete, then $F:\bbA\lra\bbB$ is a left (resp. right) adjoint if and only if $F$ preserves supremum (resp. infimum) in the sense that $F(\sup_{\bbA}\mu)=\sup_{\bbB}F^{\ra}(\mu)$ for all $\mu\in\PA$ (resp. $F(\inf_{\bbA}\mu)=\inf_{\bbB}F^{\Ra}(\mu)$ for all $\mu\in\PdA$).
\end{cor}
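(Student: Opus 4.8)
The plan is to deduce this from Proposition~\ref{F_la_ra_condition}: over a complete domain, preserving suprema is exactly the conjunction of preserving tensors and preserving fibrewise joins, because suprema contain these two operations as special cases and, conversely, are reconstructed from them via the formula $\sup\mu=\bv_{a}\mu(a)\otimes a$ of Theorem~\ref{complete_cocomplete_equivalent}. I will argue the left-adjoint/supremum version; the right-adjoint/infimum version is entirely dual, replacing $\otimes,\bv,\sY,F^{\ra},\sup,\colim$ by $\rat,\bw,\sYd,F^{\Ra},\inf,\lim$ throughout.

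Before the two implications I would record three routine facts. \textbf{(i)} $F^{\ra}\circ\sY_{\bbA}=\sY_{\bbB}\circ F$ and $F^{\ra}(f\circ\mu)=f\circ F^{\ra}(\mu)$ for $\mu\in\PA$, $f\in\CP(t\mu)$: the first is the one-line computation $\bv_{a\in\bbA_0}\bbA(a,x)\circ\bbB(b,Fa)=\bbB(b,Fx)$, using $\bbA(a,x)\le\bbB(Fa,Fx)$, the composition axiom, and $1_{tx}\le\bbA(x,x)$; the second is associativity of composition in $\CQ$-{\bf Dist}. \textbf{(ii)} $F^{\ra}\colon\PA\lra\PB$ preserves joins in each fibre $(\PA)_X$: by Remark~\ref{PdA_QDist_order} these are joins in the complete lattice $\CQ$-{\bf Dist}$(\bbA,*_X)$, and $F^{\ra}=-\circ F^{\natural}$ preserves them because composition in a quantaloid preserves joins in each variable. \textbf{(iii)} For every $\mu\in\PA$, $\colim_{\mu}F=\bv_{a\in\bbA_0}\mu(a)\otimes_{\bbB}Fa$ whenever the right-hand side exists, since by the standard hom-formula for joins, the definition of the tensor, and the definition of the colimit one has $\bbB\bigl(\bv_{a}\mu(a)\otimes_{\bbB}Fa,\,-\bigr)=\bw_{a}\bigl(\bbB(Fa,-)\lda\mu(a)\bigr)=F_{\natural}\lda\mu$. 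I would also use the identity $\sup_{\bbB}F^{\ra}(\mu)=\colim_{\mu}F$ (valid whenever either side exists), recorded earlier in Section~\ref{quantaloid}.

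With these in hand both directions are short. If $F$ is a left adjoint, then by Proposition~\ref{F_la_ra_condition} it preserves tensors and fibrewise joins; applying $F$ to $\sup_{\bbA}\mu=\bv_{a}\mu(a)\otimes_{\bbA}a$ gives $F(\sup_{\bbA}\mu)=\bv_{a}F(\mu(a)\otimes_{\bbA}a)=\bv_{a}\mu(a)\otimes_{\bbB}Fa$, which by \textbf{(iii)} equals $\colim_{\mu}F=\sup_{\bbB}F^{\ra}(\mu)$. Conversely, suppose $F$ preserves suprema. From $f\otimes_{\bbA}x=\sup_{\bbA}(f\circ\sY_{\bbA}x)$ and \textbf{(i)} we get $F(f\otimes_{\bbA}x)=\sup_{\bbB}F^{\ra}(f\circ\sY_{\bbA}x)=\sup_{\bbB}\bigl(f\circ\sY_{\bbB}(Fx)\bigr)=f\otimes_{\bbB}Fx$, so $F$ preserves tensors; from $\bv_{i}x_{i}=\sup_{\bbA}(\bv_{i}\sY_{\bbA}x_{i})$ together with \textbf{(ii)}, \textbf{(i)} and again the formula $\sup_{\bbB}(\bv_{i}\sY_{\bbB}(Fx_{i}))=\bv_{i}Fx_{i}$, we get $F(\bv_{i}x_{i})=\bv_{i}Fx_{i}$, so $F$ preserves fibrewise joins. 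Proposition~\ref{F_la_ra_condition} then yields that $F$ is a left adjoint.

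The one thing to watch is the \emph{existence} of the $\bbB$-side objects: the displayed equalities in the statement tacitly assert that $\sup_{\bbB}F^{\ra}(\mu)$ exists, and Proposition~\ref{F_la_ra_condition} likewise builds into ``$F$ preserves tensors'' and ``$F$ preserves fibrewise joins'' the existence of the corresponding tensors and joins in $\bbB$. So at each step one must observe that the object written on the $\bbB$-side is either $F$ applied to something that exists in $\bbA$ (its existence in $\bbB$ then following from Proposition~\ref{F_la_ra_condition}, since $F$, being a left adjoint, sends tensors and joins to tensors and joins), or $\sup_{\bbB}$ of a presheaf of the form $F^{\ra}(-)$, which exists by the standing hypothesis that $F$ preserves suprema (it being $F$ applied to a supremum that exists because $\bbA$ is complete). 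Once this bookkeeping is carried out there is nothing else to prove, and the statement is genuinely a corollary of Proposition~\ref{F_la_ra_condition}; I expect this existence bookkeeping, rather than any conceptual point, to be the only real obstacle.
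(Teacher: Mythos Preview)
The paper does not supply its own proof of this corollary: it is quoted from \cite{Stubbe_2006} and stated without argument, immediately after Proposition~\ref{F_la_ra_condition}, so the only ``proof'' the paper offers is the implicit signal that it follows from that proposition. Your proposal carries out exactly this derivation---reducing preservation of suprema to preservation of tensors plus fibrewise joins via the decomposition $\sup\mu=\bigvee_{a}\mu(a)\otimes a$ of Theorem~\ref{complete_cocomplete_equivalent}, and conversely recovering tensors and fibrewise joins as particular suprema---and the argument is correct, including your handling of the existence bookkeeping on the $\bbB$-side.
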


Thus, left (resp. right) adjoint $\CQ$-functors between complete $\CQ$-categories are exactly suprema-preserving (resp. infima-preserving) $\CQ$-functors. Complete $\CQ$-categories and left adjoint $\CQ$-functors constitute a subcategory of $\CQ$-{\bf Cat} which will be  denoted by $\CQ$-{\bf CCat}.

The forgetful functor $\CQ\text{-}{\bf CCat}\lra\CQ\text{-}{\bf Cat}$ has a left adjoint $\CP:\CQ\text{-}{\bf Cat}\lra\CQ\text{-}{\bf CCat}$ that sends a $\CQ$-functor $F:\bbA\lra\bbB$ to the left adjoint $\CQ$-functor $F^{\ra}:\PA\lra\PB$. This implies that $\CP\bbA$ is the free cocompletion  of $\bbA$ \cite{Stubbe_2005}.

Now, we introduce the crucial notion in this paper, that of infomorphisms between $\CQ$-distributors. An infomorphism between $\CQ$-distributors is what a Chu transform between Chu spaces \cite{Barr1991,Pratt1995}. The terminology "infomorphism" is from \cite{Barwise1997,Ganter2007}.

\begin{defn}
Given $\CQ$-distributors $\phi:\bbA\olra\bbB$ and $\psi:\bbA'\olra\bbB'$, an infomorphism $(F,G):\phi\lra\psi$ is a pair of $\CQ$-functors $F:\bbA\lra\bbA'$ and $G:\bbB'\lra\bbB$
such that $G^\natural \circ\phi=\psi\circ F_\natural$, or equivalently, $\phi(-,G-)=\psi(F-,-)$.
$$\bfig
\square[\bbA`\bbB`\bbA'`\bbB';\phi`F_\natural `G^\natural`\psi]
\place(250,0)[\circ]
\place(250,500)[\circ]
\place(0,250)[\circ]
\place(500,250)[\circ]
\efig$$
\end{defn}

An adjunction $F\dv G:\bbA\rhu\bbB$ in $\CQ$-{\bf Cat} is exactly an infomorphism from the identity $\CQ$-distributor on $\bbA$ to the identity $\CQ$-distributor on $\bbB$. Thus, infomorphisms are an extension of adjoint $\CQ$-functors.

$\CQ$-distributors and infomorphisms constitute a category $\CQ$-{\bf Info}. The primary aim of this paper is to show that the constructions of Isbell adjunctions and Kan adjunctions are functors defined on $\CQ$-{\bf Info}.

\begin{prop} \label{Y_functor_Cat_Info}
Let $F:\bbA\lra\bbB$ be a $\CQ$-functor, then
$$(F,F^{\la}):((\sY_{\bbA})_{\natural}:\bbA\olra\PA)\lra((\sY_{\bbB})_{\natural}:\bbB\olra\PB)$$
is an infomorphism.
\end{prop}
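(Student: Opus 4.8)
The plan is to unwind the definition of infomorphism and then appeal to the Yoneda lemma together with the pointwise description of $F^{\la}$. First I would record that the pair $(F,F^{\la})$ has the correct shape for an infomorphism $(\sY_{\bbA})_{\natural}\lra(\sY_{\bbB})_{\natural}$: the source distributor is $(\sY_{\bbA})_{\natural}:\bbA\olra\PA$ and the target is $(\sY_{\bbB})_{\natural}:\bbB\olra\PB$, so the second component must be a $\CQ$-functor $\PB\lra\PA$, and $F^{\la}:\PB\lra\PA$ (defined by $F^{\la}(\lam)=\lam\circ F_{\natural}$) is exactly such a $\CQ$-functor. By the definition of infomorphism, what must be checked is the equality of $\CQ$-distributors $(F^{\la})^{\natural}\circ(\sY_{\bbA})_{\natural}=(\sY_{\bbB})_{\natural}\circ F_{\natural}:\bbA\olra\PB$, or, in the pointwise form $\phi(-,G-)=\psi(F-,-)$ given there,
$$(\sY_{\bbA})_{\natural}(a,F^{\la}\lam)=(\sY_{\bbB})_{\natural}(Fa,\lam)\qquad\text{for all }a\in\bbA_0,\ \lam\in\PB.$$

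Next I would evaluate the left-hand side. Since $(\sY_{\bbA})_{\natural}$ is the graph of the Yoneda embedding $\sY_{\bbA}:\bbA\lra\PA$, we have $(\sY_{\bbA})_{\natural}(a,\mu)=\PA(\sY_{\bbA}a,\mu)$ for every $\mu\in\PA$, and the Yoneda lemma (Lemma \ref{Yoneda_lemma}) identifies this with $\mu(a)$. Applying this to $\mu=F^{\la}\lam$ gives $(\sY_{\bbA})_{\natural}(a,F^{\la}\lam)=(F^{\la}\lam)(a)$, which by formula (\ref{F_la_lam_Fx}) equals $\lam(Fa)$. Evaluating the right-hand side by the same argument, $(\sY_{\bbB})_{\natural}(Fa,\lam)=\PB(\sY_{\bbB}(Fa),\lam)=\lam(Fa)$. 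Hence both sides equal $\lam(Fa)\in\CQ(ta,t\lam)$ (note $ta=t(Fa)$, as $F$ is type-preserving), and the required identity holds.

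I do not expect any real obstacle: the statement is a one-line computation once the Yoneda lemma and the pointwise formula for $F^{\la}$ are available. The only points that need a little care are purely bookkeeping --- confirming that it is $F^{\la}$, not $F^{\ra}$, that is the component going in the right direction for an infomorphism (it must be contravariant relative to $F$), and checking that the pointwise equation above is a faithful transcription of the distributor identity $(F^{\la})^{\natural}\circ(\sY_{\bbA})_{\natural}=(\sY_{\bbB})_{\natural}\circ F_{\natural}$ via the reformulation $\phi(-,G-)=\psi(F-,-)$.
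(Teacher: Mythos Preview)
Your proposal is correct and follows essentially the same approach as the paper: both verify the infomorphism condition pointwise by computing $(\sY_{\bbA})_{\natural}(a,F^{\la}\lam)=\lam(Fa)=(\sY_{\bbB})_{\natural}(Fa,\lam)$ via the Yoneda lemma and Equation~(\ref{F_la_lam_Fx}). The paper's proof omits your preliminary shape-checking remarks but the computation is identical.
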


\begin{proof}
For all $x\in\bbA_0$ and $\lam\in\PB$,
\begin{align*}
(\sY_{\bbA})_{\natural}(x,F^{\la}(\lam))&= \PA(\sY_\bbA(x),F^{\la}(\lam))\\
&=F^{\la}(\lam)(x)&\text{(by Yoneda lemma)}\\
&=\lam(Fx)&\text{(by Equation (\ref{F_la_lam_Fx}))}\\
&= \PB(\sY_\bbB(Fx),\lam)&\text{(by Yoneda lemma)}\\
&=(\sY_{\bbB})_{\natural}(Fx,\lam).
\end{align*}
Hence the conclusion holds.
\end{proof}

The above proposition gives rise to a fully faithful functor $\BY:\CQ\text{-}{\bf Cat}\lra\CQ\text{-}{\bf Info}$ that sends each $\CQ$-category $\bbA$ to the graph $(\sY_{\bbA})_{\natural}$ of the Yoneda embedding.

\begin{prop} \label{Y_U_adjunction}
$\BY:\CQ\text{-}{\bf Cat}\lra\CQ\text{-}{\bf Info}$ is a left adjoint of the forgetful functor $\BU:\CQ\text{-}{\bf Info}\lra\CQ\text{-}{\bf Cat}$ that sends an infomorphism
$$(F,G):(\phi:\bbA\olra\bbB)\lra(\psi:\bbA'\olra\bbB')$$
to the $\CQ$-functor $F:\bbA\lra\bbA'$.
\end{prop}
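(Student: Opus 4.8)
The plan is to establish the adjunction $\BY\dv\BU$ by producing, for each $\CQ$-category $\bbA$ and each $\CQ$-distributor $\psi:\bbA'\olra\bbB'$, a bijection
$$\CQ\text{-}{\bf Info}\big((\sY_{\bbA})_{\natural},\ \psi\big)\ \cong\ \CQ\text{-}{\bf Cat}(\bbA,\BU\psi),\qquad (F,G)\longmapsto F,$$
natural in $\bbA$ and in $\psi$ (recall $\BU\psi=\bbA'$). It will be convenient to use that a $\CQ$-distributor $\psi:\bbA'\olra\bbB'$ ``curries'' to a $\CQ$-functor $\widetilde\psi:\bbB'\lra\CP\bbA'$, $y'\mapsto\psi(-,y')$: indeed $\widetilde\psi$ is type-preserving, and the inequality $\bbB'(y'_1,y'_2)\le\CP\bbA'(\widetilde\psi y'_1,\widetilde\psi y'_2)=\psi(-,y'_2)\lda\psi(-,y'_1)$ unwinds pointwise to $\bbB'(y'_1,y'_2)\circ\psi(x',y'_1)\le\psi(x',y'_2)$ for $x'\in\bbA'_0$, which is exactly defining inequality (1) of $\psi$.

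Now fix $\psi:\bbA'\olra\bbB'$. By definition an infomorphism $(F,G):((\sY_{\bbA})_{\natural}:\bbA\olra\PA)\lra(\psi:\bbA'\olra\bbB')$ is a pair of $\CQ$-functors $F:\bbA\lra\bbA'$, $G:\bbB'\lra\PA$ with $(\sY_{\bbA})_{\natural}(x,Gy')=\psi(Fx,y')$ for all $x\in\bbA_0$, $y'\in\bbB'_0$. The Yoneda lemma rewrites the left-hand side as $\PA(\sY_{\bbA}x,Gy')=(Gy')(x)$, so the condition becomes $(Gy')(x)=\psi(Fx,y')$ for every $x$; by Equation (\ref{F_la_lam_Fx}) this says exactly $Gy'=F^{\la}(\psi(-,y'))$, i.e.\ $G=F^{\la}\circ\widetilde\psi$. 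Hence $G$ is forced by $F$, so $(F,G)\mapsto F$ is injective; it is surjective because, given any $\CQ$-functor $F:\bbA\lra\bbA'$, the composite $G:=F^{\la}\circ\widetilde\psi:\bbB'\lra\PA$ is a $\CQ$-functor and, by the same computation, $(F,G)$ satisfies the infomorphism equation. This yields the claimed bijection.

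For naturality, in $\bbA$: for $H:\bbA_1\lra\bbA$ one has $\BY H=(H,H^{\la})$, and precomposition sends $(F,G)$ to $(F,G)\circ(H,H^{\la})=(F\circ H,\ G\circ H^{\la})$, whose first component $F\circ H$ is the image of $F$ under $\CQ\text{-}{\bf Cat}(H,\BU\psi)$; in $\psi$: for an infomorphism $(K,L):\psi\lra\psi'$, postcomposition sends $(F,G)$ to $(K,L)\circ(F,G)=(K\circ F,\ G\circ L)$ with first component $K\circ F=\BU(K,L)\circ F$. Equivalently, one reads off the counit $\epsilon_{\psi}:=(1_{\bbA'},\widetilde\psi):\BY\BU\psi\lra\psi$ (an infomorphism since $(\sY_{\bbA'})_{\natural}(x',\widetilde\psi y')=\widetilde\psi(y')(x')=\psi(x',y')$ by Yoneda) and takes the unit to be the identity, which is legitimate because $\BU\circ\BY=1_{\CQ\text{-}{\bf Cat}}$ on the nose. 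The triangle identities then reduce to $\BU\epsilon_{\psi}=1_{\bbA'}$ (immediate) and $\epsilon_{\BY\bbA}=1_{\BY\bbA}$; the latter holds since the currying of $(\sY_{\bbA})_{\natural}$ is the $\CQ$-functor $\PA\lra\CP\bbA$, $\mu\mapsto(\sY_{\bbA})_{\natural}(-,\mu)=\mu$ (Yoneda once more), i.e.\ $1_{\PA}$, so $\epsilon_{\BY\bbA}=(1_{\bbA},1_{\PA})$. This proves $\BY\dv\BU$.

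I do not expect a genuine obstacle here: the whole argument is bookkeeping with the Yoneda lemma and the two defining inequalities of $\CQ$-distributors. The two points where care is needed are checking that the currying $\widetilde\psi$ is an honest $\CQ$-functor (this is where distributor axiom (1) enters) and keeping the variance straight in the composition law of $\CQ\text{-}{\bf Info}$, where the codomain $\CQ$-functors compose contravariantly; a mistake there would corrupt the naturality check.
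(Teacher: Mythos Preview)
Your argument is correct and is essentially the same as the paper's: both hinge on the observation that the infomorphism condition $(\sY_{\bbA})_{\natural}(x,Gy')=\psi(Fx,y')$ forces $G=F^{\la}\circ\widetilde\psi$ (the paper writes $\lapsi$ for your $\widetilde\psi$), so the second component of an infomorphism out of $\BY\bbA$ is determined by the first. The paper packages this as the universal property of the identity unit, while you package it as a hom-set bijection and additionally spell out the counit and triangle identities; the underlying computation is identical. One small slip: in your naturality check in $\bbA$, the composite $(F,G)\circ(H,H^{\la})$ should have second component $H^{\la}\circ G$, not $G\circ H^{\la}$ (the latter does not even type-check); this is exactly the contravariance you warn about at the end, but it does not affect the conclusion since only the first component is needed there.
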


\begin{proof}
It is clear that $\BU\circ\BY={\bf id}_{\CQ\text{-}{\bf Cat}}$, the identity functor on $\CQ$-{\bf Cat}. Thus $\{1_{\bbA}\}$ is a natural transformation from ${\bf id}_{\CQ\text{-}{\bf Cat}}$ to $\BU\circ\BY$. It remains to show that for each $\CQ$-category $\bbA$, $\CQ$-distributor $\psi:\bbA'\lra\bbB'$ and $\CQ$-functor $H:\bbA\lra\bbA'$, there is a unique infomorphism
$$(F,G):\BY(\bbA)\lra(\psi:\bbA'\olra\bbB')$$
such that the diagram
$$\bfig
\qtriangle<700,500>[\bbA`\BU\circ\BY(\bbA)`\bbA';1_{\bbA}`H`\BU(F,G)]
\efig$$
is commutative. By definition, $\BY(\bbA)$ is the graph $(\sY_{\bbA})_{\natural}:\bbA\olra\PA$ and $\BU(F,G)=F$. Thus, we only need to show that there is a unique $\CQ$-functor $G:\bbB'\lra\PA$ such that
$$(H,G):((\sY_{\bbA})_{\natural}:\bbA\olra\PA)\lra(\psi:\bbA'\olra\bbB')$$
is an infomorphism.

Let $G=H^{\la}\circ\lapsi:\bbB'\lra\PA$, where $\lapsi:\bbB'\lra\PA'$ is the $\CQ$-functor assigning each  $y'\in\bbB'_0$ to $\psi(-,y')$ in $\PA$. Then $$(H,G):((\sY_{\bbA})_{\natural}:\bbA\olra\PA) \lra(\psi:\bbA'\olra\bbB')$$
is an infomorphism since
$$(\sY_{\bbA})_{\natural}(x,Gy')=(Gy')(x)=H^{\la}\circ\lapsi(y')(x)=\lapsi(y')(Hx)=\psi(Hx,y')$$
for all $x\in\bbA_0$ and $y'\in\bbB'_0$. This proves the existence of $G$.

To see the uniqueness of $G$, suppose that $G':\bbB'\lra\PA$ is another $\CQ$-functor such that
$$(H,G'):((\sY_{\bbA})_{\natural}:\bbA\olra\PA)\lra(\psi:\bbA'\olra\bbB')$$
is an infomorphism. Then for all $x\in\bbA_0$ and $y'\in\bbB'_0$,
$$(G'y')(x)=(\sY_{\bbA})_{\natural}(x,G'y')=\psi(Hx,y')=\lapsi(y')(Hx)=H^{\la}\circ\lapsi(y')(x)=(Gy')(x),$$
hence $G'=G$.
\end{proof}

Similar to Proposition \ref{Y_functor_Cat_Info}, one can check that sending a $\CQ$-functor $F:\bbA\lra\bbB$ to the infomorphism
$$(F^{\La},F):((\sYd_{\bbB})^{\natural}:\PdB\olra\bbB)\lra((\sYd_{\bbA})^{\natural}:\PdA\olra\bbA)$$
induces a fully faithful functor $\BY^{\dag}:\CQ\text{-}{\bf Cat}\lra(\CQ\text{-}{\bf Info})^{\rm op}$.

\begin{prop} \label{Y_U_adjunction_contravariant}
$\BY^{\dag}:\CQ\text{-}{\bf Cat}\lra(\CQ\text{-}{\bf Info})^{\rm op}$ is a left adjoint of the contravariant forgetful functor $(\CQ\text{-}{\bf Info})^{\rm op}\lra\CQ\text{-}{\bf Cat}$ that sends each infomorphism
$$(F,G):(\phi:\bbA\olra\bbB)\lra(\psi:\bbA'\olra\bbB')$$
to the $\CQ$-functor $G:\bbB'\lra\bbB$.
\end{prop}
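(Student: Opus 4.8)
The plan is to dualize the proof of Proposition~\ref{Y_U_adjunction}, the only real care being the variance contributed by the opposite category; write $\BU^{\dag}$ for the contravariant forgetful functor in the statement. First I would record that $\BU^{\dag}\circ\BY^{\dag}={\bf id}_{\CQ\text{-}{\bf Cat}}$: on objects $\BU^{\dag}((\sYd_{\bbA})^{\natural}:\PdA\olra\bbA)=\bbA$, and on a $\CQ$-functor $F:\bbA\lra\bbB$ the infomorphism $\BY^{\dag}F=(F^{\La},F)$ is sent by $\BU^{\dag}$ to its second component $F$. Hence $\{1_{\bbA}\}$ is a natural transformation from ${\bf id}_{\CQ\text{-}{\bf Cat}}$ to $\BU^{\dag}\circ\BY^{\dag}$, and by the universal-arrow criterion it suffices to prove: for every $\CQ$-distributor $\psi:\bbA'\olra\bbB'$ and every $\CQ$-functor $H:\bbA\lra\bbB'$ there is a unique infomorphism
$$(F,H):(\psi:\bbA'\olra\bbB')\lra((\sYd_{\bbA})^{\natural}:\PdA\olra\bbA)$$
whose second component is exactly $H$; here the unknown is the $\CQ$-functor $F:\bbA'\lra\PdA$.

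The computation that makes this work is the Yoneda identity $(\sYd_{\bbA})^{\natural}(\mu,a)=\PdA(\mu,\sYd_{\bbA}a)=\mu(a)$ for $\mu\in\PdA$ and $a\in\bbA_0$ (Lemma~\ref{Yoneda_lemma}). With it, the infomorphism condition $\psi(-,H-)=(\sYd_{\bbA})^{\natural}(F-,-)$ for a candidate $(F,H)$ unfolds to
$$(Fa')(a)=\psi(a',Ha)\qquad\text{for all }a'\in\bbA'_0,\ a\in\bbA_0;$$
this forces $Fa'$ to be the covariant presheaf $a\mapsto\psi(a',Ha)$ on $\bbA$, so such an $F$ is unique if it exists.

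For existence I would set $F=H^{\La}\circ\rapsi:\bbA'\lra\PdA$, where $\rapsi:\bbA'\lra\PdB'$ is the $\CQ$-functor sending $a'\in\bbA'_0$ to the covariant presheaf $\psi(a',-)\in\PdB'$ (that $\rapsi$ is a $\CQ$-functor follows from the distributor axioms for $\psi$ together with the adjunction defining $\rda$, exactly as $\overline{\psi}=\lapsi$ is used in the proof of Proposition~\ref{Y_U_adjunction}). Then Equation~(\ref{F_La_lam_Fx}) gives, for all $a'\in\bbA'_0$ and $a\in\bbA_0$,
$$(Fa')(a)=\big(H^{\La}(\psi(a',-))\big)(a)=\psi(a',-)(Ha)=\psi(a',Ha),$$
which together with the Yoneda identity above is precisely the required infomorphism condition. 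Finally, naturality of the resulting hom-set bijection is automatic from the universal-arrow formulation (or it may be checked directly as in Proposition~\ref{Y_U_adjunction}). I do not expect a genuine obstacle here: the mathematical content is the one-line Yoneda identity and Equation~(\ref{F_La_lam_Fx}), and the only thing demanding attention is the bookkeeping — which leg of an infomorphism $\BU^{\dag}$ remembers, and the reversal of the universal property caused by passing to $(\CQ\text{-}{\bf Info})^{\rm op}$.
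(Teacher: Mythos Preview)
Your proposal is correct and follows exactly the route the paper indicates: the paper's proof is the single line ``Similar to Proposition~\ref{Y_U_adjunction}'', and what you have written is precisely that dualization carried out in detail, with the co-Yoneda identity from Lemma~\ref{Yoneda_lemma} replacing the Yoneda identity, $H^{\La}$ and $\rapsi$ replacing $H^{\la}$ and $\lapsi$, and the direction of the infomorphism reversed to account for the passage to $(\CQ\text{-}{\bf Info})^{\rm op}$. Your handling of the variance is correct, and the existence/uniqueness argument via $(Fa')(a)=\psi(a',Ha)$ is exactly the expected one.
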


\begin{proof}
Similar to Proposition \ref{Y_U_adjunction}.
\end{proof}

\section{$\CQ$-closure spaces} \label{closure_space}

\begin{defn}
Let $\bbA$ be a $\CQ$-category.
\begin{itemize}
\item[\rm (1)] An isomorphism-closed  $\CQ$-subcategory $\bbB$ of $\bbA$ is a {\it $\CQ$-closure system} (resp. {\it $\CQ$-interior system}) of $\bbA$ if the inclusion $\CQ$-functor $I:\bbB\lra\bbA$ is a right (resp. left) adjoint.
\item[\rm (2)] A $\CQ$-functor $F:\bbA\lra\bbA$ is a {\it $\CQ$-closure operator} (resp. {\it $\CQ$-interior operator}) on $\bbA$ if $1_{\bbA}\leq F$ (resp. $F\leq 1_{\bbA}$) and $F^2\cong F$.
\end{itemize}
\end{defn}

\begin{exmp} \label{adjunction_closure_interior}
Let $F\dv G:\bbA\rhu\bbB$ be an adjunction in $\CQ$-{\bf Cat}. Then $G\circ F:\bbA\lra\bbA$ is a $\CQ$-closure operator and $F\circ G:\bbB\lra\bbB$ is a $\CQ$-interior operator.
\end{exmp}

\begin{prop} \label{closure_interior_system_operator}
Let $\bbA$ be a $\CQ$-category, $\bbB$ an isomorphism-closed $\CQ$-subcategory of $\bbA$. The following conditions are equivalent:
\begin{itemize}
\item[\rm (1)] $\bbB$ is a $\CQ$-closure system (resp. $\CQ$-interior system) of $\bbA$.
\item[\rm (2)] There is a $\CQ$-closure operator (resp. $\CQ$-interior operator) $F:\bbA\lra\bbA$ such that $\bbB_0=\{x\in\bbA_0:Fx\cong x\}$.
\end{itemize}
\end{prop}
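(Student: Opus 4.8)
The plan is to prove the \(\CQ\)-closure-system version of the equivalence in full and then obtain the \(\CQ\)-interior-system version by dualizing (left adjoints in place of right adjoints, \(F\le 1_\bbA\) in place of \(1_\bbA\le F\), \(\CQ\)-interior operators in place of \(\CQ\)-closure operators). Throughout I would use that, since \(\bbB\) is a full \(\CQ\)-subcategory of \(\bbA\), its underlying preorder is simply the restriction of the one on \(\bbA\), so for \(u,v\in\bbB_0\) the relations ``\(u\le v\) in \(\bbB\)'' and ``\(u\le v\) in \(\bbA\)'' may be used interchangeably.

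For \((1)\Rightarrow(2)\): assume the inclusion \(I:\bbB\lra\bbA\) is a right adjoint, say \(L\dv I:\bbA\rhu\bbB\) with \(L:\bbA\lra\bbB\). Set \(F:=I\circ L:\bbA\lra\bbA\); by Example \ref{adjunction_closure_interior} this is a \(\CQ\)-closure operator, so it only remains to check \(\bbB_0=\{x\in\bbA_0:Fx\cong x\}\). If \(x\in\bbB_0\), then \(1_\bbA\le I\circ L\) yields \(x\le ILx=Fx\), and \(L\circ I\le 1_\bbB\) yields \(Fx=LIx\le x\) (read in \(\bbA\)), so \(Fx\cong x\). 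Conversely, if \(Fx\cong x\), then \(x\) is isomorphic to \(ILx\in\bbB_0\), whence \(x\in\bbB_0\) because \(\bbB\) is isomorphism-closed.

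For \((2)\Rightarrow(1)\): let \(F:\bbA\lra\bbA\) be a \(\CQ\)-closure operator with \(\bbB_0=\{x\in\bbA_0:Fx\cong x\}\). From \(F^2\cong F\) we get \(F(Fx)\cong Fx\), so \(Fx\in\bbB_0\) for all \(x\); hence \(F\) corestricts to a \(\CQ\)-functor \(L:\bbA\lra\bbB\) (same object map and hom-inequalities as \(F\)) with \(I\circ L=F\). I would then verify \(L\dv I:\bbA\rhu\bbB\): the inequality \(1_\bbA\le I\circ L\) is exactly \(1_\bbA\le F\), and \(L\circ I\le 1_\bbB\) holds since for \(x\in\bbB_0\) we have \(LIx=Fx\le x\) (one half of \(Fx\cong x\)). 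Thus \(I\) is a right adjoint, i.e.\ \(\bbB\) is a \(\CQ\)-closure system. (The prescribed \(\bbB_0\) is automatically isomorphism-closed, since \(\CQ\)-functors preserve the underlying preorder and hence isomorphisms.)

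I do not expect a genuine obstacle: the argument is a direct unwinding of the definitions of adjunction, \(\CQ\)-closure operator, and full \(\CQ\)-subcategory, together with Example \ref{adjunction_closure_interior}. The points that need attention are purely organizational: getting the direction of the adjunction \(L\dv I\) right so that Example \ref{adjunction_closure_interior} is applied with the correct roles, the well-definedness of the corestriction \(L\) (which is precisely what \(F^2\cong F\) provides), and the place where isomorphism-closedness of \(\bbB\) is actually invoked. The interior case is entirely dual and I would merely indicate it.
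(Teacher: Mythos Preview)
Your proposal is correct and follows essentially the same approach as the paper's proof: in both directions you form the closure operator as the composite of the left adjoint with the inclusion (or corestrict the given closure operator to obtain the left adjoint), and you verify the fixed-point description using the unit/counit inequalities together with isomorphism-closedness of $\bbB$. The only cosmetic difference is that the paper phrases the $(1)\Rightarrow(2)$ step via the hom-identity $\bbB(Gx,x)=\bbA(x,Ix)$ rather than the inequalities $1_\bbA\le I\circ L$ and $L\circ I\le 1_\bbB$, which is equivalent.
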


\begin{proof}
$(1)\Rightarrow(2)$: If the inclusion $\CQ$-functor $I:\bbB\lra\bbA$ has a left adjoint $G:\bbA\lra\bbB$, let $F=I\circ G$, then $F:\bbA\lra\bbA$ is a $\CQ$-closure operator. Since $Fx=Gx\in\bbB_0$ for all $x\in\bbA_0$ and $\bbB$ is isomorphism-closed, it is clear that $\{x\in\bbA_0:Fx\cong x\}\subseteq\bbB_0$. Conversely, for all $x\in\bbB_0$,
$$\bbB(Fx,x)=\bbB(Gx,x)=\bbA(x,Ix)=\bbA(x,x)\geq 1_{tx},$$
and $\bbB(x,Fx)\geq 1_{tx}$ holds trivially, hence $x\cong Fx$, as required.

$(2)\Rightarrow(1)$: We show that the inclusion $\CQ$-functor $I:\bbB\lra\bbA$ is a right adjoint. View $F$ as a $\CQ$-functor from $\bbA$ to $\bbB$, then $1_{\bbA}\leq I\circ F$. Since $F^2\cong F$, it follows that $F\circ I\cong 1_{\bbB}$. Thus $F\dv I:\bbA\rhu\bbB$, as required.
\end{proof}

\begin{rem}
For a $\CQ$-category $\bbA$, a $\CQ$-closure operator (resp. $\CQ$-interior operator) $F:\bbA\lra\bbA$ is exactly a monad (resp. comonad) \cite{Lane1998} on $\bbA$. The above proposition states that a $\CQ$-closure system (resp. $\CQ$-interior system) of $\bbA$ is exactly the category of algebras (resp. coalgebras) for a monad (resp. comonad) on $\bbA$. The terminology "$\CQ$-closure operator" (resp. "$\CQ$-interior operator") comes from its similarity to closure (resp. interior) operators in topology.
 \end{rem}

\begin{prop} \label{closure_system_complete}
Each $\CQ$-closure system (resp. $\CQ$-interior system) of a complete $\CQ$-category is itself a complete $\CQ$-category.
\end{prop}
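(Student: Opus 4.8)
The plan is to prove the $\CQ$-closure system case and deduce the $\CQ$-interior system case by a dual argument. Let $\bbA$ be a complete $\CQ$-category and $\bbB$ a $\CQ$-closure system of $\bbA$, so that the inclusion $\CQ$-functor $I:\bbB\lra\bbA$ has a left adjoint $L:\bbA\lra\bbB$. First I would record the basic fact --- already implicit in the proof of Proposition \ref{closure_interior_system_operator} --- that $L\circ I\cong 1_{\bbB}$: since $I$ is fully faithful (it is the inclusion of a full $\CQ$-subcategory) it reflects the underlying order, and combining $1_{\bbB}\leq L\circ I$ with $I\circ L\leq 1_{\bbA}$ yields $(L\circ I)x\cong x$ for every $x\in\bbB_0$. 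Hence the cograph $(L\circ I)^{\natural}$ is the identity distributor $\bbB$, and therefore $(L\circ I)^{\ra}=1_{\PB}$.

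The key observation is then that \emph{every} contravariant presheaf on $\bbB$ lies in the image of $L^{\ra}$: for $\nu\in\PB$ one computes $L^{\ra}(I^{\ra}(\nu))=(L\circ I)^{\ra}(\nu)=\nu$, the first equality being functoriality of $(-)^{\ra}$, which rests on the identity $(G\circ F)^{\natural}=F^{\natural}\circ G^{\natural}$ for cographs. Now I would invoke Corollary \ref{left_adjoint_preserves_sup}: as $\bbA$ is complete and $L$ is a left adjoint, $L$ preserves suprema, so $\sup_{\bbB}L^{\ra}(\mu)$ exists and equals $L(\sup_{\bbA}\mu)$ for every $\mu\in\PA$. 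Taking $\mu=I^{\ra}(\nu)$ shows that $\sup_{\bbB}\nu=L(\sup_{\bbA}I^{\ra}(\nu))$ exists for every $\nu\in\PB$, whence $\bbB$ is complete by the implication $(4)\Rightarrow(1)$ of Theorem \ref{complete_cocomplete_equivalent}.

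For the $\CQ$-interior system case the inclusion $I:\bbB\lra\bbA$ is a left adjoint, say $I\dv R$ with $R:\bbA\lra\bbB$. Dually, $R\circ I\cong 1_{\bbB}$, so $(R\circ I)_{\natural}=\bbB$ and $(R\circ I)^{\Ra}=1_{\PdB}$; thus every covariant presheaf $\nu\in\PdB$ equals $R^{\Ra}(I^{\Ra}(\nu))$. Since $\bbA$ is complete and $R$ is a right adjoint, Corollary \ref{left_adjoint_preserves_sup} tells us $R$ preserves infima, so $\inf_{\bbB}\nu=R(\inf_{\bbA}I^{\Ra}(\nu))$ exists for every $\nu\in\PdB$, and $\bbB$ is complete by $(6)\Rightarrow(1)$ of Theorem \ref{complete_cocomplete_equivalent}. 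I do not expect a genuine obstacle: the substance lies entirely in Corollary \ref{left_adjoint_preserves_sup} and Theorem \ref{complete_cocomplete_equivalent}, and the only point needing care is the bookkeeping --- working with contravariant presheaves and the left adjoint $L$ in the closure case versus covariant presheaves and the right adjoint $R$ in the interior case, and checking in each case that the composite of the two halves of the adjunction restricted to the $\bbB$-side is the identity.
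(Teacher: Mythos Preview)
Your proof is correct and follows essentially the same approach as the paper: both arguments use the adjunction $L\dv I$, invoke Corollary~\ref{left_adjoint_preserves_sup} to get that $L$ preserves suprema, and then show that $L^{\ra}\circ I^{\ra}$ is the identity on $\PB$ so that $\sup_{\bbB}\nu=L(\sup_{\bbA}I^{\ra}(\nu))$. The only difference is tactical: you obtain $L^{\ra}\circ I^{\ra}=1_{\PB}$ directly from $L\circ I\cong 1_{\bbB}$, whereas the paper reaches the same conclusion via a short chain of distributor identities using Proposition~\ref{fully_faithful_graph_cograph}(2).
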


\begin{proof}
Let $\bbB$ be a $\CQ$-closure system of a complete $\CQ$-category $\bbA$. By Proposition \ref{closure_interior_system_operator}, there is a $\CQ$-closure operator $F:\bbA\lra\bbA$ such that $\bbB_0=\{x\in\bbA_0:Fx\cong x\}$. View $F$ as a $\CQ$-functor from $\bbA$ to $\bbB$, then $F$ is essentially surjective and $F\dv I:\bbA\rhu\bbB$, where $I$ is the inclusion $\CQ$-functor. For all $\mu\in\PB$,
\begin{align*}
F({\sup}_{\bbA}I^{\ra}(\mu))&={\sup}_{\bbB}F^{\ra}\circ I^{\ra}(\mu)&(\text{by Corollary \ref{left_adjoint_preserves_sup}})\\
&={\sup}_{\bbB}\mu\circ I^{\natural}\circ F^{\natural}&(\text{by the definition\ of}\ F^{\ra}\ \text{and}\ I^{\ra})\\
&={\sup}_{\bbB}\mu\circ F_{\natural}\circ F^{\natural}\circ I^{\natural}\circ F^{\natural}&(\text{by Proposition \ref{fully_faithful_graph_cograph}(2)})\\
&={\sup}_{\bbB}\mu\circ F_{\natural}\circ (F\circ I\circ F)^{\natural}\\
&={\sup}_{\bbB}\mu\circ F_{\natural}\circ F^{\natural}&(\text{since}\ F\dv I:\bbA\rhu\bbB)\\
&={\sup}_{\bbB}\mu.&(\text{by Proposition \ref{fully_faithful_graph_cograph}(2)})
\end{align*}
Then it follows from Proposition \ref{complete_cocomplete_equivalent} that $F(\bbA)$ is a complete $\CQ$-category.
\end{proof}

\begin{prop} \label{closure_system_cotensor_meet}
Let $\bbA$ be a complete $\CQ$-category with tensor $\otimes$ and cotensor $\ \rat\ $, $\bbB$ an isomorphism-closed $\CQ$-subcategory of $\bbA$. Then $\bbB$ is a $\CQ$-closure system (resp. $\CQ$-interior system) of $\bbA$ if and only if
\begin{itemize}
\item[\rm (1)] for every subset $\{x_i\}\subseteq\bbB_0$ of the same type $X$, the meet $\displaystyle\bw\limits_i x_i$ (resp. the join $\displaystyle\bv\limits_i x_i$) in $\bbA_X$ belongs to $\bbB_0$.
\item[\rm (2)] for each $x\in\bbB_0$ and $f\in\CPd(tx)$ (resp. $f\in\CP(tx)$), the cotensor $f\rat x$ (resp. the tensor $f\otimes x$) in $\bbA$ belongs to $\bbB_0$.
\end{itemize}
\end{prop}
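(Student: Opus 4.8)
Our plan is to prove the statement for $\CQ$-closure systems, the statement for $\CQ$-interior systems being obtained by the evident dual argument (interchange tensors with cotensors, meets with joins, and left adjoints with right adjoints throughout).

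\emph{Necessity.} Suppose $\bbB$ is a $\CQ$-closure system of $\bbA$, so the inclusion $\CQ$-functor $I:\bbB\lra\bbA$ is a right adjoint. By Proposition \ref{closure_system_complete}, $\bbB$ is complete, hence Proposition \ref{F_la_ra_condition} applies to $I$ (its domain being the complete $\CQ$-category $\bbB$): the right adjoint $I$ preserves cotensors, and for each $X\in\CQ_0$ it preserves meets in $\bbB_X$. Since $\bbB$ is a \emph{full} $\CQ$-subcategory of $\bbA$, ``$I$ preserves cotensors'' says $f\rat_{\bbB}x=f\rat_{\bbA}x$, so the cotensor $f\rat_{\bbA}x$ formed in $\bbA$ lies in $\bbB_0$, which is item (2); and ``$I$ preserves the meet $\bw_i x_i$ of a family in $\bbB_X$'' says that this meet, whether computed in $\bbB_X$ or in $\bbA_X$, is the same object, which therefore lies in $\bbB_0$, i.e.\ item (1).

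\emph{Sufficiency.} Conversely, assume (1) and (2). We exhibit a left adjoint $G:\bbA\lra\bbB$ of $I$ by the ``closure'' formula
$$Ga:=\bw_{b\in\bbB_0}\big(\bbA(a,b)\rat b\big),$$
the cotensors being formed in $\bbA$ and the meet taken in the fibre $\bbA_{ta}$; this is legitimate because $\bbA(a,b)\in\CPd(tb)$ and each cotensor $\bbA(a,b)\rat b$ has type $ta$. By (2) every such cotensor lies in $\bbB_0$, whence by (1) the meet $Ga$ lies in $\bbB_0$; as $\bbB$ is isomorphism-closed we fix $Ga\in\bbB_0$. It then remains to verify that (i) $a\le Ga$ in $\bbA_{ta}$ for every $a\in\bbA_0$ (equivalently $1_\bbA\le I\circ G$); (ii) $\bbA(Ga,b)=\bbA(a,b)$ for all $a\in\bbA_0$ and $b\in\bbB_0$; and (iii) $G$ is a $\CQ$-functor. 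Granting these, (ii) together with the fullness of $\bbB$ states exactly that $G_\natural=\bbB(G-,-)=\bbA(-,I-)=I^\natural$, so $G\dv I:\bbA\rhu\bbB$ and $\bbB$ is a $\CQ$-closure system, as wanted.

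The three checks are routine once one recalls the defining property $\bbA(-,f\rat x)=f\rda\bbA(-,x)$ of the cotensor and the resulting counit $f\le\bbA(f\rat x,x)$: (i) reduces, via the universal property of the meet, to $a\le\bbA(a,b)\rat b$, which holds since $\bbA(a,\bbA(a,b)\rat b)=\bbA(a,b)\rda\bbA(a,b)\ge 1_{ta}$; in (ii) the inequality $\bbA(Ga,b)\le\bbA(a,b)$ comes from (i) by composition, and $\bbA(a,b)\le\bbA(Ga,b)$ from $Ga\le\bbA(a,b)\rat b$ plus the counit; and (iii) follows from (ii) since $\bbA(Ga,Ga')=\bbA(a,Ga')\ge\bbA(a,a')$, using $a'\le Ga'$ from (i). The real difficulty here is not computational but conceptual, and it lies in discovering the formula for $G$: hypotheses (1) and (2) merely assert that $\bbB$ is closed in $\bbA$ under \emph{limit}-type operations — meets and cotensors, which concern arrows \emph{into} objects — whereas being a $\CQ$-closure system demands a reflection $G$, a \emph{colimit}-type gadget pointing the opposite way. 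The crux is to see that in a complete $\bbA$ this reflection is nonetheless forced to be $Ga=\bw_{b\in\bbB_0}(\bbA(a,b)\rat b)$, hence expressible entirely through the data that (1) and (2) govern; after that, the only thing needing care is the bookkeeping of types — in particular, why the meet defining $Ga$ must be formed inside $\bbA_{ta}$ — and no separate appeal to completeness of $\bbB$ is required.
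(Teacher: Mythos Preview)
Your proof is correct. For the necessity direction you follow the paper's line exactly (invoke Proposition~\ref{F_la_ra_condition} applied to $I$, making explicit the appeal to Proposition~\ref{closure_system_complete} that guarantees $\bbB$ is complete so that the hypothesis of~\ref{F_la_ra_condition} is met).

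For sufficiency you take a genuinely different route. The paper's one-line proof again appeals to Proposition~\ref{F_la_ra_condition}: conditions~(1) and~(2) say that cotensors and fibrewise meets of objects of $\bbB$, computed in $\bbA$, remain in $\bbB$; hence $\bbB$ inherits cotensors and meets, therefore infima, and is complete; the inclusion $I$ then preserves cotensors and meets, so~\ref{F_la_ra_condition} yields that $I$ is a right adjoint. You instead bypass the completeness of $\bbB$ entirely and construct the reflection $G$ by the explicit formula $Ga=\bw_{b\in\bbB_0}(\bbA(a,b)\rat b)$, verifying the adjunction by hand. Your approach is more elementary and self-contained (no intermediate ``$\bbB$ is complete'' step), and it has the side benefit of exhibiting the closure explicitly --- indeed your formula specializes to the infimum formula~(\ref{closure_system_infimum}) that the paper records as a consequence. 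The paper's approach is terser and leans more heavily on the existing adjoint-functor criterion, at the cost of leaving the reader to supply the missing completeness check for~$\bbB$.
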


\begin{proof}
Follows immediately from Proposition \ref{F_la_ra_condition}.
\end{proof}

An immediate consequence of Proposition \ref{closure_system_cotensor_meet} is that the infimum (resp. supremum) in a $\CQ$-closure system (resp. $\CQ$-interior system) $\bbB$ of a complete $\CQ$-category $\bbA$ can be calculated as
\begin{equation} \label{closure_system_infimum}
{\inf}_{\bbB}\lam=\bw_{b\in\bbB_0}(\lam(b)\rat b),\quad\Big(\text{resp.}\ {\sup}_{\bbB}\mu=\bv_{b\in\bbB_0}(\mu(b)\otimes b)\Big)
\end{equation}
for $\lam\in\PdB$ (resp. $\mu\in\PB$), where the cotensors and meets (resp. tensors and joins) are calculated in $\bbA$.

\begin{defn}
A {\it $\CQ$-closure space} is a pair $(\bbA,C)$ that consists of a $\CQ$-category $\bbA$ and a $\CQ$-closure operator $C:\PA\lra\PA$. A {\it continuous $\CQ$-functor} $F:(\bbA,C)\lra(\bbB,D)$ between $\CQ$-closure spaces is a $\CQ$-functor $F:\bbA\lra\bbB$ such that $F^{\ra}\circ C\leq D\circ F^{\ra}$.
 The category of $\CQ$-closure spaces and continuous $\CQ$-functors is denoted by $\CQ$-{\bf Cls}.
\end{defn}

\begin{rem}
If $C, D$ are viewed as monads on $\CP\bbA, \CP\bbB$ respectively, then a $\CQ$-functor $F:(\bbA,C)\lra(\bbB,D)$ between $\CQ$-closure spaces is continuous if and only if $F^\ra:\CP\bbA\lra\CP\bbB$ is a lax map of monads from $C$ to $D$ in the sense of \cite{Leinster2004}.
\end{rem}

Note that for a $\CQ$-closure space $(\bbA,C)$, the $\CQ$-closure operator $C$ is idempotent since $\PA$ is skeletal. Let $C(\PA)$ denote the $\CQ$-subcategory of $\PA$ consisting of the fixed points of $C$. Since $\PA$ is a complete $\CQ$-category,  $C(\PA)$ is also a complete $\CQ$-category. A contravariant presheaf $\bbA\olra*_X$ is said to be {\it closed} in the $\CQ$-closure space $(\bbA,C)$ if it belongs to $C(\PA)$.
The following lemma states that continuous $\CQ$-functors behave in a manner similar to the continuous maps between topological spaces: the inverse image of a closed contravariant presheaf is closed.
\begin{lem}
A $\CQ$-functor $F:(\bbA,C)\lra(\bbB,D)$ between $\CQ$-closure spaces is continuous if and only if $F^{\la}(\lam)\in C(\PA)$ whenever $\lam\in D(\PB)$.
\end{lem}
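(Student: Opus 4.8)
The plan is to reduce everything to the adjunction $F^{\ra}\dv F^{\la}:\PA\rhu\PB$, together with three elementary facts: $C$ and $D$ are monotone (being $\CQ$-functors on the underlying preorders), they satisfy $1_{\PA}\leq C$ and $1_{\PB}\leq D$, and they are idempotent on the nose because $\PA$ and $\PB$ are skeletal. Throughout, "$F^{\la}(\lam)\in C(\PA)$" unwinds to $C(F^{\la}(\lam))=F^{\la}(\lam)$, and since $1_{\PA}\leq C$ this is equivalent to the single inequality $C(F^{\la}(\lam))\leq F^{\la}(\lam)$. Similarly, continuity of $F$ means $F^{\ra}(C(\nu))\leq D(F^{\ra}(\nu))$ for every $\nu\in\PA$. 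I will use repeatedly the two triangle inequalities $\mu\leq F^{\la}(F^{\ra}(\mu))$ and $F^{\ra}(F^{\la}(\lam))\leq\lam$ recorded in (\ref{Fla_Fra_adjuntion}).

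For the necessity direction, suppose $F$ is continuous and take $\lam\in D(\PB)$, so $D(\lam)=\lam$. By the adjunction $F^{\ra}\dv F^{\la}$, proving $C(F^{\la}(\lam))\leq F^{\la}(\lam)$ amounts to proving $F^{\ra}(C(F^{\la}(\lam)))\leq\lam$. Applying continuity to the presheaf $F^{\la}(\lam)\in\PA$ gives $F^{\ra}(C(F^{\la}(\lam)))\leq D(F^{\ra}(F^{\la}(\lam)))$; since $F^{\ra}(F^{\la}(\lam))\leq\lam$ and $D$ is monotone, the right-hand side is $\leq D(\lam)=\lam$, as desired. Hence $F^{\la}(\lam)\in C(\PA)$.

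For the sufficiency direction, assume $C(F^{\la}(\lam))=F^{\la}(\lam)$ for every $\lam\in D(\PB)$, and fix an arbitrary $\mu\in\PA$; we must show $F^{\ra}(C(\mu))\leq D(F^{\ra}(\mu))$. Set $\lam:=D(F^{\ra}(\mu))$; by idempotency of $D$ we have $\lam\in D(\PB)$, so the hypothesis applies to $\lam$. From $1_{\PB}\leq D$ we get $F^{\ra}(\mu)\leq\lam$, whence $\mu\leq F^{\la}(F^{\ra}(\mu))\leq F^{\la}(\lam)$ by the unit of the adjunction and monotonicity of $F^{\la}$. Applying the monotone map $C$, then the hypothesis, then the monotone map $F^{\ra}$, and finally the counit of the adjunction yields
$$F^{\ra}(C(\mu))\leq F^{\ra}(C(F^{\la}(\lam)))= F^{\ra}(F^{\la}(\lam))\leq\lam=D(F^{\ra}(\mu)),$$
which is exactly the inequality defining continuity.

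I do not expect a genuine obstacle: the argument is a bookkeeping exercise with the two triangle inequalities of $F^{\ra}\dv F^{\la}$ and the order-theoretic bounds on $C$ and $D$. The one point worth stating carefully is that we are working with \emph{contravariant} presheaves, so by the convention fixed in Remark \ref{PdA_QDist_order} the underlying order on $\PA$ and $\PB$ coincides with the local order in $\CQ$-{\bf Dist}; no inequality gets reversed, and the idempotency of $C$ and $D$ (needed only for the sufficiency direction, to know $D(F^{\ra}(\mu))$ is a fixed point) is guaranteed by skeletality of $\PA$ and $\PB$ as already observed before the statement.
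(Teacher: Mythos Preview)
Your proof is correct and follows essentially the same approach as the paper's: both directions use only the adjunction $F^{\ra}\dv F^{\la}$, monotonicity of $C,D$, the inequalities $1\leq C$, $1\leq D$, and idempotency. The paper phrases the condition globally as $C\circ F^{\la}\circ D\leq F^{\la}\circ D$ and manipulates composites, whereas you argue pointwise by fixing $\lam$ or $\mu$; the underlying chains of inequalities are the same.
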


\begin{proof}
It suffices to show that $F^{\ra}\circ C\leq D\circ F^{\ra}$ if and only if $C\circ F^{\la}\circ D\leq F^{\la}\circ D$.

Suppose $F^{\ra}\circ C\leq D\circ F^{\ra}$, then
$$F^{\ra}\circ C\circ F^{\la}\circ D\leq D\circ F^{\ra}\circ F^{\la}\circ D\leq D\circ D=D,$$
and consequently $C\circ F^{\la}\circ D\leq F^{\la}\circ D$.

Conversely, suppose $C\circ F^{\la}\circ D\leq F^{\la}\circ D$, then
$$C\leq C\circ F^{\la}\circ F^{\ra}\leq C\circ F^{\la}\circ D\circ F^{\ra}\leq F^{\la}\circ D\circ F^{\ra},$$
and consequently $F^{\ra}\circ C\leq D\circ F^{\ra}$.
\end{proof}

Thus a continuous $\CQ$-functor $F:(\bbA,C)\lra(\bbB,D)$ between $\CQ$-closure spaces induces a pair of $\CQ$-functors
$$F^{\triangleright}=D\circ F^{\ra}:C(\PA)\lra D(\PB)\quad\text{and}\quad F^{\triangleleft}=F^{\la}:D(\PB)\lra C(\PA).$$

\begin{prop}
If $F:(\bbA,C)\lra(\bbB,D)$ is a continuous $\CQ$-functor between $\CQ$-closure spaces, then $F^{\triangleright}\dv F^{\triangleleft}:C(\PA)\rhu D(\PB)$.
\end{prop}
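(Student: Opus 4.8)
The plan is to verify that the pair $(F^{\triangleright}, F^{\triangleleft})$ satisfies the two inequalities defining an adjunction in $\CQ$-{\bf Cat}, namely $1_{C(\PA)} \leq F^{\triangleleft} \circ F^{\triangleright}$ and $F^{\triangleright} \circ F^{\triangleleft} \leq 1_{D(\PB)}$. The crucial background fact is $F^{\ra} \dv F^{\la} : \PA \rhu \PB$, which gives $1_{\PA} \leq F^{\la} \circ F^{\ra}$ and $F^{\ra} \circ F^{\la} \leq 1_{\PB}$ (equivalently, the inequalities in (\ref{Fla_Fra_adjuntion})); I also have at hand that $C$ and $D$ are $\CQ$-closure operators, so $1 \leq C$, $1 \leq D$, $C^2 \cong C$, $D^2 \cong D$, and that $\PA$, $\PB$ are skeletal so these are genuine idempotents.

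First I would check the unit inequality. For $\mu \in C(\PA)$, so $C\mu = \mu$, I compute $F^{\triangleleft} \circ F^{\triangleright}(\mu) = F^{\la}(D(F^{\ra}(\mu)))$. Since $1 \leq D$ we have $F^{\ra}(\mu) \leq D(F^{\ra}(\mu))$, and applying the monotone $\CQ$-functor $F^{\la}$ gives $F^{\la}(F^{\ra}(\mu)) \leq F^{\la}(D(F^{\ra}(\mu)))$. Combined with $\mu \leq F^{\la}(F^{\ra}(\mu))$ from the adjunction, this yields $\mu \leq F^{\triangleleft}(F^{\triangleright}(\mu))$, which is exactly $1_{C(\PA)} \leq F^{\triangleleft} \circ F^{\triangleright}$ as an inequality of $\CQ$-functors on $C(\PA)$.

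Next I would check the counit inequality. For $\lam \in D(\PB)$, so $D\lam = \lam$, I need $F^{\triangleright}(F^{\triangleleft}(\lam)) = D(F^{\ra}(F^{\la}(\lam))) \leq \lam$. From the adjunction $F^{\ra} \circ F^{\la} \leq 1_{\PB}$ we get $F^{\ra}(F^{\la}(\lam)) \leq \lam$; applying the monotone $D$ gives $D(F^{\ra}(F^{\la}(\lam))) \leq D(\lam) = \lam$, as desired. The only subtlety to record along the way is that $F^{\triangleright}$ and $F^{\triangleleft}$ genuinely land in the stated subcategories — but this is precisely the content of the preceding lemma and the sentence after it (for $\mu \in C(\PA)$, $D(F^{\ra}(\mu))$ is a fixed point of $D$ since $D$ is idempotent, and for $\lam \in D(\PB)$, $F^{\la}(\lam) \in C(\PA)$ by the lemma), so I would simply cite that rather than reprove it.

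I do not expect a serious obstacle here: the argument is a routine "restrict an adjunction to subobjects cut out by a closure operator" manipulation, and every ingredient — monotonicity of $F^{\ra}$, $F^{\la}$, $C$, $D$; the two adjunction inequalities; idempotency and inflationarity of the closure operators; and the well-definedness of $F^{\triangleright}$, $F^{\triangleleft}$ — is already established above. If anything deserves a moment's care, it is making sure the inequalities are being read in the underlying preorder of $\PA$ (equivalently the local order of $\CQ$-{\bf Dist}, per Remark \ref{PdA_QDist_order}) consistently, so that "$\leq$" between presheaves means what it should; once that convention is fixed the proof is a three-line chain of inequalities in each direction.
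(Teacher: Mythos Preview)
Your proof is correct. You verify the adjunction via the unit/counit inequalities $1_{C(\PA)}\leq F^{\triangleleft}\circ F^{\triangleright}$ and $F^{\triangleright}\circ F^{\triangleleft}\leq 1_{D(\PB)}$, which is precisely the definition of adjunction adopted in the paper, and every step you list is valid.

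The paper takes a slightly different route: rather than checking the two triangle inequalities, it establishes the hom-set identity $\PB(F^{\triangleright}(\mu),\lam)=\PA(\mu,F^{\triangleleft}(\lam))$ by showing $\PB(D\circ F^{\ra}(\mu),\lam)=\PB(F^{\ra}(\mu),\lam)$ for $\lam\in D(\PB)$ and then invoking the known adjunction $F^{\ra}\dv F^{\la}$. The two arguments are equivalent and of comparable length; yours has the mild advantage of appealing directly to the paper's stated definition of adjunction, while the paper's version makes the role of $D$ as a reflector onto $D(\PB)$ a bit more explicit via the equality $\PB(D\nu,\lam)=\PB(\nu,\lam)$ for closed $\lam$.
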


\begin{proof}
It is sufficient to check that
$$\PB(D\circ F^{\ra}(\mu),\lam)=\PB(F^{\ra}(\mu),\lam)$$
for all $\mu\in C(\PA)$ and $\lam\in D(\PB)$ since it holds that $\PA(\mu,F^\la(\lambda))=\PB(F^\ra(\mu),\lambda)$. Indeed, since $D$ is  a $\CQ$-closure operator,
\begin{align*}
\PB(F^{\ra}(\mu),\lam)&\leq\PB(D\circ F^{\ra}(\mu),D(\lam))\\
&=\PB(D\circ F^{\ra}(\mu),\lam)\\
&=\lam\lda(D\circ F^{\ra}(\mu))\\
&\leq\lam\lda F^{\ra}(\mu)\\
&=\PB(F^{\ra}(\mu),\lam),
\end{align*} hence $\PB(D\circ F^{\ra}(\mu),\lam)=\PB(F^{\ra}(\mu),\lam)$.
\end{proof}

Skeletal complete $\CQ$-categories constitute a full subcategory of $\CQ$-{\bf CCat} and we denote it by $(\CQ\text{-}{\bf CCat})_{\skel}$. The above proposition gives rise to a functor
$$\CT:\CQ{\text -}{\bf Cls}\lra(\CQ\text{-}{\bf CCat})_{\skel}$$
that maps a continuous $\CQ$-functor $F:(\bbA,C)\lra(\bbB,D)$ to a left adjoint $\CQ$-functor $F^{\triangleright}:C(\PA)\lra D(\PB)$ between skeletal complete $\CQ$-categories.

For each complete $\CQ$-category $\bbA$, it follows from Theorem \ref{complete_cocomplete_equivalent} and Example \ref{adjunction_closure_interior} that $C_{\bbA}=\sY\circ\sup:\PA\lra\PA$ is a $\CQ$-closure operator, hence $(\bbA,C_{\bbA})$ is a $\CQ$-closure space.

\begin{prop}
If $F:\bbA\lra\bbB$ is a left adjoint $\CQ$-functor between complete $\CQ$-categories, then $F:(\bbA,C_{\bbA})\lra(\bbB,C_{\bbB})$ is a continuous $\CQ$-functor.
\end{prop}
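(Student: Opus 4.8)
The plan is to establish the slightly stronger identity $F^{\ra}\circ C_{\bbA}=C_{\bbB}\circ F^{\ra}$ of $\CQ$-functors $\PA\lra\PB$; the inequality $F^{\ra}\circ C_{\bbA}\leq C_{\bbB}\circ F^{\ra}$ required by the definition of continuity is then an immediate consequence. Recall that $C_{\bbA}=\sY_{\bbA}\circ\sup_{\bbA}$ and $C_{\bbB}=\sY_{\bbB}\circ\sup_{\bbB}$, so the whole statement reduces to two elementary factorisations.

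First I would record the compatibility of $F^{\ra}$ with the Yoneda embeddings, namely $F^{\ra}\circ\sY_{\bbA}=\sY_{\bbB}\circ F$. This is just the naturality of $\sY$ (it is the unit of the free cocompletion $\CP$), but it also admits a one-line direct check: writing $\bar a:*_{ta}\lra\bbA$ for the $\CQ$-functor naming an object $a\in\bbA_0$, one has $\sY_{\bbA}a=\bar a^{\natural}$, whence, by contravariance of cographs,
$$F^{\ra}(\sY_{\bbA}a)=\sY_{\bbA}a\circ F^{\natural}=\bar a^{\natural}\circ F^{\natural}=(F\circ\bar a)^{\natural}=\sY_{\bbB}(Fa).$$
Alternatively one expands $F^{\ra}(\sY_{\bbA}a)(y)=\bv_{x\in\bbA_0}\bbA(x,a)\circ\bbB(y,Fx)$ and notes that the summand at $x=a$ is $\geq\bbB(y,Fa)$ while every summand is $\leq\bbB(Fx,Fa)\circ\bbB(y,Fx)\leq\bbB(y,Fa)$. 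Second, since $\bbA$ is complete and $F$ is a left adjoint $\CQ$-functor, Corollary \ref{left_adjoint_preserves_sup} gives $F\circ\sup_{\bbA}=\sup_{\bbB}\circ F^{\ra}$.

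Combining the two, $F^{\ra}\circ C_{\bbA}=F^{\ra}\circ\sY_{\bbA}\circ\sup_{\bbA}=\sY_{\bbB}\circ F\circ\sup_{\bbA}=\sY_{\bbB}\circ\sup_{\bbB}\circ F^{\ra}=C_{\bbB}\circ F^{\ra}$, which finishes the argument. There is no genuine obstacle here; the only mildly delicate point is the identity $F^{\ra}\circ\sY_{\bbA}=\sY_{\bbB}\circ F$, and even that is routine. It is worth observing that completeness of $\bbB$ is used only to make $C_{\bbB}$ (equivalently $\sup_{\bbB}$) defined, whereas the substantive input is Corollary \ref{left_adjoint_preserves_sup}, i.e. the preservation of suprema by a left adjoint out of the complete $\CQ$-category $\bbA$.
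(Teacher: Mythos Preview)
Your proof is correct and takes a somewhat different route than the paper's. The paper argues directly via a chain of inequalities: starting from $F^{\ra}(C_{\bbA}(\mu))=\bbA(-,\sup_{\bbA}\mu)\circ F^{\natural}$, it uses functoriality of $F$ to bound this by $F_{\natural}(-,F(\sup_{\bbA}\mu))\circ F^{\natural}$, then the adjunction $F_{\natural}\dv F^{\natural}$ in $\CQ$-{\bf Dist} to reach $\bbB(-,F(\sup_{\bbA}\mu))$, and finally Corollary~\ref{left_adjoint_preserves_sup} to identify this with $C_{\bbB}(F^{\ra}(\mu))$. Your approach instead factors the computation through the naturality identity $F^{\ra}\circ\sY_{\bbA}=\sY_{\bbB}\circ F$ (which the paper establishes only later, as Equation~(\ref{Yoneda_natural}) in the proof of Theorem~\ref{T_D_adjunction}, but for which you supply an independent proof) and then directly invokes Corollary~\ref{left_adjoint_preserves_sup} in the form $F\circ\sup_{\bbA}=\sup_{\bbB}\circ F^{\ra}$. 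Your route is cleaner and more conceptual: it yields the equality $F^{\ra}\circ C_{\bbA}=C_{\bbB}\circ F^{\ra}$ rather than just the inequality required for continuity, and it avoids the intermediate pointwise manipulations. The paper's version, on the other hand, is more self-contained at this point of the exposition. Both arguments ultimately rest on the same substantive input, namely preservation of suprema by a left adjoint.
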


\begin{proof}
For all $\mu\in\PA$,
\begin{align*}
F^{\ra}\circ C_{\bbA}(\mu)&=C_{\bbA}(\mu)\circ F^{\natural}\\
&=\bbA(-,{\sup}_{\bbA}\mu)\circ F^{\natural}\\
&\leq\bbB(F-,F({\sup}_{\bbA}\mu))\circ F^{\natural}\\
&=F_{\natural}(-,F({\sup}_{\bbA}\mu))\circ F^{\natural}\\
&\leq\bbB(-,F({\sup}_{\bbA}\mu))&(\text{since}\ F_{\natural}\dv F^{\natural}:\bbA\rhu\bbB\ \text{in}\ \CQ\text{-}{\bf Dist})\\
&=\bbB(-,{\sup}_{\bbB}F^{\ra}(\mu))&(\text{by Corollary \ref{left_adjoint_preserves_sup}})\\
&=C_{\bbB}\circ F^{\ra}(\mu).
\end{align*}
Hence $F:(\bbA,C_{\bbA})\lra(\bbB,C_{\bbB})$ is continuous.
\end{proof}

The above proposition gives a functor $\CD:(\CQ\text{-}{\bf CCat})_{\skel}\lra\CQ\text{-}{\bf Cls}$.

For each $\CQ$-category $\bbA$, it is clear that $C_{\bbA}(\PA)=\{\sY_{\bbA} a\mid a\in\bbA_0\}$. So, for a skeletal $\CQ$-category $\bbA$, if we identify $\bbA$ with the $\CQ$-subcategory $C_{\bbA}(\PA)$ of $\PA$, then the functor $\CT:\CQ\text{-}{\bf Cls}\lra (\CQ\text{-}{\bf CCat})_{\skel}$ is a left inverse of $\CD:(\CQ\text{-}{\bf CCat})_{\skel}\lra\CQ\text{-}{\bf Cls}$ since $\CT\circ\CD(\bbA)=C_{\bbA}(\PA)$.

\begin{thm} \label{T_D_adjunction}
$\CT:\CQ\text{-}{\bf Cls}\lra (\CQ\text{-}{\bf CCat})_{\skel}$ is a left inverse and left adjoint of $\CD:(\CQ\text{-}{\bf CCat})_{\skel} \lra \CQ\text{-}{\bf Cls}$.
\end{thm}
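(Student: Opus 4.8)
The left-inverse assertion is essentially recorded in the paragraph preceding the statement: under the identification of a skeletal complete $\CQ$-category $\bbA$ with the $\CQ$-subcategory $C_\bbA(\PA)=\{\sY_\bbA a\mid a\in\bbA_0\}$ of $\PA$, one has $\CT\circ\CD(\bbA)=C_\bbA(\PA)$ on objects, and on a left adjoint $F\colon\bbA\lra\bbB$ the functor $\CT\circ\CD(F)=F^{\triangleright}=C_\bbB\circ F^{\ra}$ sends $\sY_\bbA a$ to $\sY_\bbB(\sup_\bbB F^{\ra}(\sY_\bbA a))=\sY_\bbB(Fa)$, i.e.\ corresponds to $F$. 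So the real work is the adjunction $\CT\dv\CD$, and the plan is to exhibit its unit directly. For a $\CQ$-closure space $(\bbA,C)$, let $\eta_{(\bbA,C)}\colon\bbA\lra C(\PA)$ be the $\CQ$-functor $a\mapsto C(\sY_\bbA a)$; it is the corestriction to $C(\PA)$ of the composite $\bbA\xrightarrow{\sY_\bbA}\PA\xrightarrow{C}\PA$, hence a $\CQ$-functor. I will then verify (i) that $\eta_{(\bbA,C)}$ is a continuous $\CQ$-functor $(\bbA,C)\lra\CD\CT(\bbA,C)=(C(\PA),C_{C(\PA)})$, and (ii) the expected universal property: every continuous $\CQ$-functor from $(\bbA,C)$ into some $\CD(\mathbb{L})$, with $\mathbb{L}$ skeletal and complete, factors uniquely through $\eta_{(\bbA,C)}$ by a left adjoint $\CQ$-functor $C(\PA)\lra\mathbb{L}$.

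For (i) the key computation is that $\sup_{C(\PA)}\eta^{\ra}(\nu)=C\nu$ for every $\nu\in\PA$. Since $C$ is a $\CQ$-closure operator on $\PA$, viewing $C$ as a $\CQ$-functor $\PA\lra C(\PA)$ makes it a left adjoint of the inclusion $\iota\colon C(\PA)\lra\PA$, so it preserves suprema (Corollary \ref{left_adjoint_preserves_sup}); combined with $\eta^{\ra}=C^{\ra}\circ(\sY_\bbA)^{\ra}$ this gives $\sup_{C(\PA)}\eta^{\ra}(\nu)=C\bigl(\sup_{\PA}(\sY_\bbA)^{\ra}(\nu)\bigr)$, and by Example \ref{PX_PA_complete}(1) and Proposition \ref{fully_faithful_graph_cograph}(1), $\sup_{\PA}(\sY_\bbA)^{\ra}(\nu)=\nu\circ(\sY_\bbA)^{\natural}\circ(\sY_\bbA)_{\natural}=\nu$. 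Continuity then drops out: for $\mu\in\PA$ both $\eta^{\ra}(C\mu)$ and $\eta^{\ra}(\mu)$ have supremum $C\mu$ in $C(\PA)$, so by the adjunction $\sup_{C(\PA)}\dv\sY_{C(\PA)}$ one gets $\eta^{\ra}(C\mu)\leq\sY_{C(\PA)}(C\mu)=C_{C(\PA)}(\eta^{\ra}\mu)$, which is exactly $\eta^{\ra}\circ C\leq C_{C(\PA)}\circ\eta^{\ra}$.

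For (ii), given $\mathbb{L}$ skeletal complete and a continuous $F\colon(\bbA,C)\lra(\mathbb{L},C_{\mathbb{L}})$, put $\overline F:=\sup_{\mathbb{L}}\circ F^{\ra}\colon\PA\lra\mathbb{L}$, the free-cocompletion extension of $F$, which is a left adjoint with $\overline F\circ\sY_\bbA=F$. Unravelling continuity of $F$ (using $C_{\mathbb{L}}=\sY_{\mathbb{L}}\circ\sup_{\mathbb{L}}$) and applying $\sup_{\mathbb{L}}$ yields $\overline F\circ C\leq\overline F$; since $1_{\PA}\leq C$ and $\mathbb{L}$ is skeletal, this forces $\overline F\circ C=\overline F$, whence $\overline F=\overline F\circ\iota\circ C=G\circ C$ with $G:=\overline F\circ\iota\colon C(\PA)\lra\mathbb{L}$, and $G\circ\eta_{(\bbA,C)}=\overline F\circ C\circ\sY_\bbA=F$. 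That $G$ is a left adjoint I will obtain from Corollary \ref{left_adjoint_preserves_sup} together with the description of suprema in a $\CQ$-closure system furnished by the proof of Proposition \ref{closure_system_complete}, namely $\sup_{C(\PA)}\Phi=C(\sup_{\PA}\iota^{\ra}\Phi)$: then $G(\sup_{C(\PA)}\Phi)=\overline F(C(\sup_{\PA}\iota^{\ra}\Phi))=\overline F(\sup_{\PA}\iota^{\ra}\Phi)=\sup_{\mathbb{L}}G^{\ra}(\Phi)$, using $\overline F\circ C=\overline F$ and that $\overline F$ preserves suprema. For uniqueness, if $G'\colon C(\PA)\lra\mathbb{L}$ is another left adjoint with $G'\circ\eta_{(\bbA,C)}=F$, then $G'\circ C\colon\PA\lra\mathbb{L}$ is a left adjoint extending $F$ along $\sY_\bbA$, so it coincides with $\overline F$ by freeness of $\PA$ and skeletality of $\mathbb{L}$; composing with $\iota$ and using $C\circ\iota=1_{C(\PA)}$ gives $G'=\overline F\circ\iota=G$.

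Finally, (i) and (ii) say precisely that $\eta$ is the unit of an adjunction $\CT\dv\CD$ (naturality of $\eta$ being automatic), and specializing (ii) to $C=C_{\mathbb{L}}$, where $\eta_{(\mathbb{L},C_{\mathbb{L}})}=C_{\mathbb{L}}\circ\sY_{\mathbb{L}}=\sY_{\mathbb{L}}$ (since $\sup_{\mathbb{L}}\circ\sY_{\mathbb{L}}=1_{\mathbb{L}}$) is an isomorphism onto $C_{\mathbb{L}}(\CP\mathbb{L})$, shows the counit is an isomorphism, in harmony with the left-inverse statement. I expect the crux to be the two points where one must upgrade "$\eta$ (resp.\ $G$) is a $\CQ$-functor" to "$\eta$ (resp.\ $G$) is a morphism of the relevant category" — the continuity of $\eta$ and the preservation of all suprema by $G$ — both of which rest on the explicit formula for suprema inside a $\CQ$-closure system coming from Proposition \ref{closure_system_complete} and its proof.
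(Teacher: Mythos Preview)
Your proposal is correct and follows the same architecture as the paper's proof: the unit is $\eta_{(\bbA,C)}=C\circ\sY_\bbA$, the key identity is $\sup_{C(\PA)}\eta^{\ra}(\nu)=C\nu$, and the universal arrow is obtained from $\sup_{\mathbb{L}}\circ F^{\ra}$. The differences are only in how the individual verifications are executed. Your continuity argument is a bit slicker than the paper's: you deduce $\eta^{\ra}(C\mu)\leq\sY_{C(\PA)}(C\mu)$ directly from $\sup_{C(\PA)}\dv\sY_{C(\PA)}$, whereas the paper carries out a longer chain of inequalities to reach the analogous $\eta^{\ra}(\mu)\leq\sY_{\bbX}(\mu)$ for $\mu\in\bbX$. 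For the factorising arrow, the paper exhibits an explicit right adjoint $F^{\triangleleft}\circ\sY_{\mathbb{L}}$, while you instead verify suprema-preservation using the formula $\sup_{C(\PA)}\Phi=C(\sup_{\PA}\iota^{\ra}\Phi)$ from Proposition~\ref{closure_system_complete}; both are valid and yours stays closer to the cocompletion viewpoint. For uniqueness the paper expands $\mu$ via tensors, whereas you invoke the universal property of $\PA$ as free cocompletion; again equivalent. Finally, the paper verifies naturality of $\eta$ explicitly (its Step~2), while you correctly note that naturality is forced once each $\eta_{(\bbA,C)}$ is shown to be initial in the relevant comma category, so no separate check is needed.
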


\begin{proof}
It remains to show that $\CT$ is a left adjoint of $\CD$ . Given a $\CQ$-closure space $(\bbA,C)$, denote $C(\PA)$ by $\bbX$, then $\CD\circ\CT(\bbA,C)=(\bbX,C_{\bbX})$. Let $\eta_{(\bbA,C)}=C\circ\sY_{\bbA}:\bbA\lra\bbX$. We show that $\eta=\{\eta_{(\bbA,C)}\}$ is a natural transformation from the identity functor  to $\CD\circ\CT$ and it is the unit of the desired adjunction.

{\bf Step 1}. $\eta_{(\bbA,C)}:(\bbA,C)\lra (\bbX,C_{\bbX})$ is a continuous $\CQ$-functor, i.e. $\eta_{(\bbA,C)}^{\ra}\circ C\leq C_{\bbX}\circ\eta_{(\bbA,C)}^{\ra}$.

Firstly, we show that $C(\mu)=\sup_{\bbX}\circ\eta_{(\bbA,C)}^{\ra}(\mu)$ for all $\mu\in\PA$. Consider the diagram:
$$\bfig
\Square[\CP(\PA)`\PA`\PX`\bbX;\sup_{\PA}`C^\ra`C`\sup_{\bbX}]
\morphism(-600,500)<600,0>[\PA`\CP(\PA);\sY_{\bbA}^{\ra}]
\morphism(-600,500)|l|<600,-500>[\PA`\CP\bbX;\eta_{(\bbA,C)}^\ra]
\efig$$
The commutativity of the left triangle follows from $\eta_{(\bbA,C)}=C\circ \sY_{\bbA}$. Since $C:\PA\lra\bbX$ is a left adjoint in $\CQ$-{\bf Cat} (obtained in the proof of Proposition \ref{closure_interior_system_operator}), it preserves supremum (Corollary \ref{left_adjoint_preserves_sup}), thus the right square commutes. The whole diagram is then commutative. For each $\mu\in\PA$, we have that
\begin{equation} \label{mu_sup_ymu}
\mu=\mu\circ\bbA=\mu\circ\sY_{\bbA}^{\natural}\circ(\sY_{\bbA})_{\natural}=\sY_{\bbA}^\ra(\mu)\circ(\sY_{\bbA})_{\natural}={\sup}_{\PA}\circ\sY_{\bbA}^\ra(\mu),
\end{equation}
where the second equality comes from the fact that the Yoneda embedding $\sY_{\bbA}$ is fully faithful and Proposition \ref{fully_faithful_graph_cograph}(1), while the last equality comes from Example \ref{PX_PA_complete}. Consequently,
$$C(\mu)=C\circ{\sup}_{\PA}\circ\sY_{\bbA}^\ra(\mu)={\sup}_{\bbX}\circ\eta_{(\bbA,C)}^{\ra}(\mu)$$
for all $\mu\in\PA$.

Secondly, we show that $\eta_{(\bbA,C)}^{\ra}(\mu)\leq\sY_{\bbX}(\mu)=\bbX(-,\mu)$ for each $\mu\in\bbX$. Indeed,
\begin{align*}
\eta_{(\bbA,C)}^{\ra}(\mu)&=\mu\circ\eta_{(\bbA,C)}^{\natural}\\
&=\mu\circ(C\circ\sY_{\bbA})^{\natural}\\
&=\PA(\sY_\bbA-,\mu)\circ\sY_{\bbA}^{\natural}\circ C^{\natural}&(\text{by Yoneda lemma})\\
&=(\sY_{\bbA})_{\natural}(-,\mu)\circ\sY_{\bbA}^{\natural}\circ C^{\natural}& \\
&\leq\PA(-,\mu)\circ C^{\natural}&(\text{since}\ (\sY_{\bbA})_{\natural}\dv\sY_{\bbA}^{\natural}:\bbA\rhu\PA\ \text{in}\ \CQ\text{-}{\bf Dist})\\
&\leq\bbX(C-,\mu)\circ C^{\natural}&(\text{since}\ C\ \text{is a}\ \CQ\text{-functor and}\ C(\mu)=\mu)\\
&=C_{\natural}(-,\mu)\circ C^{\natural}\\
&\leq\bbX(-,\mu).&(\text{since}\ C_{\natural}\dv C^{\natural}:\PA\rhu\bbX\ \text{in}\ \CQ\text{-}{\bf Dist})
\end{align*}

Therefore, for all $\mu\in\PA$,
$$\eta_{(\bbA,C)}^{\ra}\circ C(\mu)\leq\sY_{\bbX}\circ{\sup}_{\bbX}\circ\eta_{(\bbA,C)}^{\ra}(\mu)=C_{\bbX}\circ\eta_{(\bbA,C)}^{\ra}(\mu),$$
as desired.

{\bf Step 2}. $\eta=\{\eta_{(\bbA,C)}\}$ is a natural transformation. Let $F:(\bbA,C)\lra(\bbB,D)$ be a continuous $\CQ$-functor, we must show that
$$D\circ\sY_{\bbB}\circ F=\eta_{(\bbB,D)}\circ F=\CD\circ\CT\circ F\circ\eta_{(\bbA,C)}=D\circ F^{\ra}\circ C\circ\sY_{\bbA}.$$

Firstly, we show that
\begin{equation} \label{Yoneda_natural}
\sY_{\bbB}\circ F=F^{\ra}\circ\sY_{\bbA}
\end{equation}
for each $\CQ$-functor $F:\bbA\lra\bbB$. Indeed, for all $x\in\bbA_0$,
\begin{align*}
\sY_{\bbB}\circ Fx&=F^{\natural}(-,x)&(\text{by the definition of}\ F^{\natural})\\
&=(\bbA\circ F^{\natural})(-,x)\\
&=\bbA(-,x)\circ F^{\natural}&(\text{by Remark \ref{distributor_notion}})\\
&=F^{\ra}\circ\sY_{\bbA}x.&(\text{by the definition of}\ F^{\ra})
\end{align*}

Secondly, since $C$ is a $\CQ$-closure operator,
$$\sY_{\bbB}\circ F=F^{\ra}\circ\sY_{\bbA}\leq F^{\ra}\circ C\circ\sY_{\bbA},$$
and consequently $D\circ\sY_{\bbB}\circ F\leq D\circ F^{\ra}\circ C\circ\sY_{\bbA}$.

Thirdly, the continuity of $F$ leads to
$$F^{\ra}\circ C\circ\sY_{\bbA}\leq D\circ F^{\ra}\circ\sY_{\bbA}=D\circ\sY_{\bbB}\circ F,$$
hence $D\circ F^{\ra}\circ C\circ\sY_{\bbA}\leq D\circ\sY_{\bbB}\circ F$.

{\bf Step 3}. $\eta_{(\bbA,C)}:(\bbA,C)\lra (\bbX,C_{\bbX})$ is universal in the sense that for any skeletal complete $\CQ$-category $\bbB$ and continuous $\CQ$-functor $F:(\bbA,C)\lra (\bbB,C_{\bbB})$, there exists a unique left adjoint $\CQ$-functor $\overline{F}:\bbX\lra\bbB$ that makes the following diagram commute:
\begin{equation} \label{eta_universal}
\bfig
\qtriangle<600,500>[(\bbA,C)`(\bbX,C_{\bbX})`(\bbB,C_{\bbB});\eta_{(\bbA,C)}`F`\overline{F}]
\efig
\end{equation}

{\bf Existence.} Let $\overline{F}=\sup_{\bbB}\circ F^\ra:\bbX\lra\bbB$ be the following composition of $\CQ$-functors
$$\bbX\hookrightarrow\PA\to^{F^\ra}\PB\to^{\sup_{\bbB}}\bbB. $$

First,  $\overline{F}:\bbX\lra\bbB$ is a left adjoint in $\CQ$-{\bf Cat}. Indeed, $\overline{F}$ has a right adjoint $G:\bbB\lra\bbX$ given by $G=F^{\triangleleft}\circ\sY_{\bbB}$. $G$ is well-defined since $\sY_{\bbB}b$ is a closed  in $(\bbB,C_{\bbB})$ for each $b\in\bbB_0$. For all $\mu\in\bbX_0$ and $y\in\bbB_0$, it holds that
\begin{align*}
\bbB(\overline{F}(\mu),y)&=\bbB(-,y)\lda F^{\ra}(\mu)\\
&=\bbB(-,y)\lda(\mu\circ F^{\natural})\\
&=(\bbB(-,y)\circ F_\natural)\lda\mu&(\text{by Proposition \ref{graph_cograph_implication}(2)})\\
&=F_\natural (-,y)\lda\mu&(\text{by Remark \ref{distributor_notion}})\\
&=\PA(\mu,F^{\triangleleft}\circ\sY_{\bbB}y)&(\text{by the definition of}\ F_{\natural}\ \text{and}\ F^{\triangleleft})\\
&=\bbX(\mu,Gy),
\end{align*}
hence  $\overline{F}$ is a left adjoint of $G$.

Second,  $F=\overline{F}\circ\eta_{(\bbA,C)}$. Note that for all $x\in\bbA_0$,
\begin{align*}
\bbB(Fx,-)&=F_{\natural}(x,-)\\
&=(\bbB\lda F^{\natural})(x,-)& (\text{by Proposition \ref{graph_cograph_implication}(1)})\\
&=\bbB\lda F^{\natural}(-,x)\\
&=\bbB\lda(\sY_{\bbB}\circ Fx)\\
&=\bbB\lda(F^{\ra}\circ\sY_{\bbA}x),&(\text{by Equation (\ref{Yoneda_natural})})
\end{align*}
thus $F=\sup_{\bbB}\circ F^{\ra}\circ\sY_{\bbA}$. Consequently
\begin{align*}
\overline{F}\circ\eta_{(\bbA,C)}&={\sup}_{\bbB}\circ F^{\ra}\circ C\circ\sY_{\bbA}\\
&\leq{\sup}_{\bbB}\circ C_{\bbB}\circ F^{\ra}\circ\sY_{\bbA}&(\text{since}\ F\ \text{is continuous})\\
&={\sup}_{\bbB}\circ \sY_{\bbB}\circ{\sup}_{\bbB}\circ F^{\ra}\circ\sY_{\bbA}\\
&={\sup}_{\bbB}\circ F^{\ra}\circ\sY_{\bbA}&(\text{since}\ {\sup}_{\bbB}\dv\sY_{\bbB}:\PB\rhu\bbB)\\
&=F.
\end{align*}
Conversely, since $C$ is a $\CQ$-closure operator, it is clear that
$$F={\sup}_{\bbB}\circ F^{\ra}\circ\sY_{\bbA}\leq{\sup}_{\bbB}\circ F^{\ra}\circ C\circ\sY_{\bbA}=\overline{F}\circ\eta_{(\bbA,C)},$$
hence $F\cong\overline{F}\circ\eta_{(\bbA,C)}$, and consequently $F=\overline{F}\circ\eta_{(\bbA,C)}$ since $\bbB$ is skeletal.

{\bf Uniqueness.} Suppose $H:\bbX\lra\bbB$ is another left adjoint $\CQ$-functor that makes Diagram (\ref{eta_universal}) commute. For each $\mu\in\bbX$, since $C:\PA\lra\bbX$ is a left adjoint in $\CQ$-{\bf Cat}, we have
$$\mu=C(\mu)=C(\mu\circ\bbA)=C\Big(\bv_{x\in\bbA_0}\mu(x)\circ\sY_{\bbA}x\Big)=\bv_{x\in\bbA_0}\mu(x)\otimes_{\bbX}C(\sY_{\bbA}x),$$
where the last equality follows from Example \ref{PA_tensor} and Proposition \ref{F_la_ra_condition}. It follows that
\begin{align*}
H(\mu)&=H\Big(\bv_{x\in\bbA_0}\mu(x)\otimes_{\bbX}C(\sY_{\bbA}x)\Big) \\
&=\bv_{x\in\bbA_0}\mu(x)\otimes_{\bbB}(H\circ\eta_{(\bbA,C)}(x))&\text{(by Proposition \ref{F_la_ra_condition})}\\
&=\bv_{x\in\bbA_0}\mu(x)\otimes_{\bbB} Fx.
\end{align*}
Consequently,
\begin{align*}
\bbB(H(\mu),-)&=\bbB\Big(\bv_{x\in\bbA_0}\mu(x)\otimes_{\bbB} Fx,-\Big)\\
&=\bw_{x\in\bbA_0}\Big(\bbB(Fx,-)\lda\mu(x)\Big)\\
&=F_\natural \lda\mu\\
&=\bbB\lda(\mu\circ F^{\natural})&(\text{by Proposition \ref{graph_cograph_implication}(2)})\\
&=\bbB\lda F^{\ra}(\mu).
\end{align*}
Since $\bbB$ is skeletal, it follows that $H(\mu)=\sup_{\bbB}\circ F^{\ra}(\mu)$. Therefore, $H=\sup_{\bbB}\circ F^{\ra}=\overline{F}$.
\end{proof}

\section{Isbell adjunctions} \label{Isbell_adjunction}

Given a $\CQ$-distributor $\phi:\bbA\olra\bbB$, define a pair of $\CQ$-functors
$$\uphi:\PA\lra\PdB\quad\text{and}\quad\dphi:\PdB\lra\PA$$
by
$$\uphi(\mu)=\phi\lda\mu\quad\text{and}\quad\dphi(\lam)=\lam\rda\phi.$$
It should be warned that $\uphi$ and $\dphi$ are both contravariant with respect to local orders in $\CQ$-{\bf Dist} by Remark \ref{PdA_QDist_order}, i.e.,
\begin{equation} \label{uphi_contravariant}
\forall\mu_1,\mu_2\in\PA,\mu_1\leq\mu_2\Longrightarrow\uphi(\mu_2)\leq\uphi(\mu_1)
\end{equation}
and
\begin{equation} \label{dphi_contravariant}
\forall\lam_1,\lam_2\in\PdB,\lam_1\leq\lam_2\Longrightarrow\dphi(\lam_2)\leq\dphi(\lam_1).
\end{equation}

\begin{prop} \label{uphi-dphi-adjunction}
$\uphi\dv\dphi:\PA\rhu\PdB$ in $\CQ$-{\bf Cat}.
\end{prop}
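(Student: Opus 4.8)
The plan is to reduce the whole statement to a single associativity identity for implications in the quantaloid $\CQ$-\textbf{Dist}. Recall that for $\CQ$-functors $F$ and $G$ one has $F\dv G$ if and only if $F_\natural=G^\natural$, i.e. $\bbB(F-,-)=\bbA(-,G-)$. Applying this to $\uphi:\PA\lra\PdB$ and $\dphi:\PdB\lra\PA$, it suffices to show that $\PdB(\uphi(\mu),\lam)=\PA(\mu,\dphi(\lam))$ for all $\mu\in\PA$ and $\lam\in\PdB$. By the hom-arrow formulas for $\PA$ and $\PdB$ together with the definitions of $\uphi$ and $\dphi$, the left-hand side equals $\lam\rda(\phi\lda\mu)$ and the right-hand side equals $(\lam\rda\phi)\lda\mu$, both lying in $\CQ(t\mu,t\lam)$. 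Thus everything comes down to the identity $\lam\rda(\phi\lda\mu)=(\lam\rda\phi)\lda\mu$ in $\CQ$-\textbf{Dist}.

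I would prove this identity by working inside $\CQ$-\textbf{Dist} with the adjunctions $-\circ\mu\dv{-}\lda\mu$ and $\lam\circ{-}\dv\lam\rda{-}$. For a morphism $\zeta\in\CQ(t\mu,t\lam)$, regarded as a $\CQ$-distributor $*_{t\mu}\olra*_{t\lam}$, successive use of these adjunctions gives $\zeta\leq\lam\rda(\phi\lda\mu)\iff\lam\circ\zeta\leq\phi\lda\mu\iff(\lam\circ\zeta)\circ\mu\leq\phi$ and likewise $\zeta\leq(\lam\rda\phi)\lda\mu\iff\zeta\circ\mu\leq\lam\rda\phi\iff\lam\circ(\zeta\circ\mu)\leq\phi$. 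Since composition in $\CQ$-\textbf{Dist} is associative, the two right-hand conditions coincide, so $\lam\rda(\phi\lda\mu)$ and $(\lam\rda\phi)\lda\mu$ have the same principal down-set in the complete lattice $\CQ(t\mu,t\lam)$ and are therefore equal. (Type-preservation of $\uphi$ and $\dphi$ is immediate, and the hom-equality just established also records their compatibility with hom-arrows, so nothing further is needed to see they are $\CQ$-functors.)

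An alternative, more elementary route avoids the criterion $F_\natural=G^\natural$ and verifies the two defining inequalities of an adjunction in $\CQ$-\textbf{Cat} directly. Using Remark \ref{PdA_QDist_order} to unwind the underlying orders (recall that the one on $\PdB$ is the reverse of the local order), these inequalities read $\mu\leq(\phi\lda\mu)\rda\phi$ in the local order of $\CQ$-\textbf{Dist} for every $\mu\in\PA$, and $\lam\leq\phi\lda(\lam\rda\phi)$ for every $\lam\in\PdB$. The first is equivalent, through the adjunction $(\phi\lda\mu)\circ{-}\dv(\phi\lda\mu)\rda{-}$, to $(\phi\lda\mu)\circ\mu\leq\phi$, which is the counit of $-\circ\mu\dv{-}\lda\mu$ at $\phi$; the second is equivalent, through $-\circ(\lam\rda\phi)\dv{-}\lda(\lam\rda\phi)$, to $\lam\circ(\lam\rda\phi)\leq\phi$, the counit of $\lam\circ{-}\dv\lam\rda{-}$ at $\phi$. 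There is no genuine obstacle in this proposition: the only step carrying content is the associativity identity above, which is routine in a biclosed quantaloid, and the only thing demanding care is the bookkeeping of types and the order reversal in $\PdB$.
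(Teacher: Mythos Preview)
Your proposal is correct and follows essentially the same approach as the paper: both verify the hom-equality $\PdB(\uphi(\mu),\lam)=\PA(\mu,\dphi(\lam))$ by reducing it to the identity $\lam\rda(\phi\lda\mu)=(\lam\rda\phi)\lda\mu$. The only difference is that the paper simply asserts this identity as a routine fact about implications in a quantaloid, whereas you spell out its proof via the adjunctions $-\circ\mu\dv{-}\lda\mu$ and $\lam\circ{-}\dv\lam\rda{-}$; your alternative unit--counit verification is also sound but not used in the paper.
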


\begin{proof}
For all $\mu\in\PA$ and $\lam\in\PdB$,
\begin{align*}
\PdB(\uphi(\mu),\lam)&=\lam\rda\uphi(\mu)\\
&=\lam\rda(\phi\lda\mu)\\
&=(\lam\rda\phi)\lda\mu\\
&=\dphi(\lam)\lda\mu\\
&=\PA(\mu,\dphi(\lam)).
\end{align*} Hence the conclusion holds.
\end{proof}

Letting $\bbB=\bbA$ and $\phi=\bbA$ in Proposition \ref{uphi-dphi-adjunction} gives the following

\begin{cor} {\rm\cite{Stubbe_2005}} \label{ub_lb_adjunction}
$\bbA\swarrow(-)\dv(-)\searrow\bbA:\PA\rhu\PdA$.
\end{cor}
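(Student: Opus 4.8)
The plan is to deduce this directly from Proposition \ref{uphi-dphi-adjunction} by specialization. Take $\bbB=\bbA$ and let $\phi:\bbA\olra\bbA$ be the identity $\CQ$-distributor, i.e.\ the hom-arrows of $\bbA$. With these choices the $\CQ$-functors $\uphi:\PA\lra\PdB$ and $\dphi:\PdB\lra\PA$ become, by their very definitions,
$$\uphi(\mu)=\phi\lda\mu=\bbA\lda\mu=\bbA\swarrow\mu\qquad(\mu\in\PA)$$
and
$$\dphi(\lam)=\lam\rda\phi=\lam\rda\bbA=\lam\searrow\bbA\qquad(\lam\in\PdA).$$
Here one should just double-check the bookkeeping of domains and codomains: for $\mu:\bbA\olra *_{t\mu}$ the left implication $\bbA\lda\mu$ is a $\CQ$-distributor $*_{t\mu}\olra\bbA$, hence a covariant presheaf on $\bbA$, so $\bbA\swarrow(-)$ indeed lands in $\PdA$; dually $\lam\searrow\bbA$ lands in $\PA$. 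Then Proposition \ref{uphi-dphi-adjunction} gives $\uphi\dv\dphi:\PA\rhu\PdB$, which in the present instance reads $\bbA\swarrow(-)\dv(-)\searrow\bbA:\PA\rhu\PdA$, as claimed.

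Alternatively, if one prefers a self-contained verification, I would repeat the three-line computation in the proof of Proposition \ref{uphi-dphi-adjunction} in this special case: for $\mu\in\PA$ and $\lam\in\PdA$,
$$\PdA\big(\bbA\swarrow\mu,\ \lam\big)=\lam\rda(\bbA\lda\mu)=(\lam\rda\bbA)\lda\mu=\PA\big(\mu,\ \lam\searrow\bbA\big),$$
the middle equality being the associativity of left and right implications in $\CQ$-{\bf Dist}. This exhibits $\bbA\swarrow(-)$ as a left adjoint of $(-)\searrow\bbA$.

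There is essentially no obstacle here: the statement is an immediate corollary, and the only point requiring a moment's attention is confirming that substituting $\phi=\bbA$ really turns $\uphi$, $\dphi$ into the two implication operators $\bbA\swarrow(-)$ and $(-)\searrow\bbA$ with the correct types. No further computation is needed.
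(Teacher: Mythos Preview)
Your proposal is correct and matches the paper's approach exactly: the paper simply says ``Letting $\bbB=\bbA$ and $\phi=\bbA$ in Proposition \ref{uphi-dphi-adjunction} gives the following,'' which is precisely your specialization argument. The extra type-checking and alternative direct computation you include are fine but not required.
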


The adjunction in Corollary \ref{ub_lb_adjunction} is known as the Isbell adjunction in category theory.  So,  the adjunction  $\uphi\dv\dphi:\PA\rhu\PdB$ is a generalization of the Isbell adjunction. As we shall see, all adjunctions between $\PA$ and $\PdB$ are of this form, and will  be called Isbell adjunctions by abuse of language.

Each $\CQ$-functor $F:\bbA\lra\PdB$ corresponds to a $\CQ$-distributor $\hF:\bbA\olra\bbB$ given by $\hF(x,y)=F(x)(y)$ for all $x\in\bbA_0$ and $y\in\bbB_0$, and each $\CQ$-functor $G:\bbB\lra\PA$ corresponds to a $\CQ$-distributor $\hG:\bbA\olra\bbB$ given by $\hG(x,y)=G(y)(x)$ for all $x\in\bbA_0$ and $y\in\bbB_0$.

\begin{prop} \label{uphi_dphi_Yoneda}
Let $\phi:\bbA\olra\bbB$ be a $\CQ$-distributor, then $\ulc\uphi\circ\sY_{\bbA}\urc=\phi=\ulc\dphi\circ\sYd_{\bbB}\urc$.
\end{prop}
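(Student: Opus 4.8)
The statement asserts two distributor identities: $\ulc \uphi \circ \sY_\bbA \urc = \phi$ and $\ulc \dphi \circ \sYd_\bbB \urc = \phi$. The plan is to unwind the definition of the hat operation $\ulc - \urc$ in each case and reduce everything to the Yoneda lemma together with the defining formulas for $\uphi$ and $\dphi$. Since the two identities are formally dual, I would write out the first in full detail and then indicate that the second follows by the analogous dual computation (swapping tensors for cotensors, left for right implications, and $\sY$ for $\sYd$).

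For the first identity, recall that $\uphi \circ \sY_\bbA : \bbA \lra \PdB$ is a $\CQ$-functor, so by the recipe preceding the proposition its hat $\ulc \uphi \circ \sY_\bbA \urc : \bbA \olra \bbB$ is the $\CQ$-distributor sending $(x,y)$ to $(\uphi(\sY_\bbA x))(y)$. Now $\uphi(\sY_\bbA x) = \phi \lda \sY_\bbA x$, a covariant presheaf on $\bbB$, and I want to evaluate it at $y \in \bbB_0$. The cleanest route is to use the Yoneda lemma for $\PdB$ in the form $\lam(y) = \PdB(\lam, \sYd_\bbB y)$, applied to $\lam = \phi \lda \sY_\bbA x$; this turns the evaluation into a hom in $\PdB$, i.e.\ a right implication $\sYd_\bbB y \rda (\phi \lda \sY_\bbA x)$. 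Alternatively — and perhaps more transparently — one computes directly: $(\phi \lda \sY_\bbA x)(y)$ unravels, using the pointwise formula for $\lda$ in $\CQ$-{\bf Dist} from the excerpt, to $\bw_{a \in \bbA_0} \phi(a,y) \lda \bbA(a,x)$, and since $\bbA(x,x) \geq 1_{tx}$ and $\phi$ is a distributor (so $\phi(a,y) \circ \bbA(x,a) \leq \phi(x,y)$, equivalently $\phi(a,y) \leq \phi(x,y) \lda \bbA(x,a)$ is \emph{not} quite what is needed — rather one uses $\phi(a,y)\lda\bbA(a,x) \geq \phi(x,y)$ when $a = x$ for the $\geq$ direction and the distributor axiom in the form $\phi(x,y) \circ \bbA(a,x) \leq \phi(a,y)$, i.e.\ $\phi(x,y) \leq \phi(a,y) \lda \bbA(a,x)$, for each $a$ to get the $\leq$ direction), the infimum collapses to $\phi(x,y)$. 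Either way one concludes $(\uphi(\sY_\bbA x))(y) = \phi(x,y)$, which is exactly the claim.

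For the second identity, $\dphi \circ \sYd_\bbB : \bbB \lra \PA$ has hat sending $(x,y)$ to $(\dphi(\sYd_\bbB y))(x)$; here $\dphi(\sYd_\bbB y) = \sYd_\bbB y \rda \phi \in \PA$, and evaluating at $x$ via the Yoneda lemma $\mu(x) = \PA(\sY_\bbA x, \mu)$ (or again directly via the pointwise formula for $\rda$) yields $\bw_{b \in \bbB_0} \bbB(y,b) \rda \phi(x,b)$, which collapses to $\phi(x,y)$ by the same pattern — the $\geq$ direction from $b = y$ and $\bbB(y,y) \geq 1_{ty}$, the $\leq$ direction from the distributor axiom $\bbB(b,y)$... here one must be careful with the variance, using $\bbB(y,b) \circ \phi(x,y) \leq \phi(x,b)$, i.e.\ $\phi(x,y) \leq \bbB(y,b) \rda \phi(x,b)$ for every $b$.

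I do not expect any genuine obstacle here: the proof is a routine double application of the Yoneda lemma (Lemma \ref{Yoneda_lemma}) and the definitions of $\uphi$, $\dphi$, and the correspondence $F \mapsto \ulc F \urc$. The only place requiring a moment's care is keeping the variances straight — $\uphi$ and $\dphi$ are order-reversing by \eqref{uphi_contravariant} and \eqref{dphi_contravariant}, and $\PdB$ carries the reversed local order by Remark \ref{PdA_QDist_order} — but this does not affect the pointwise identities being proved, since those are equalities of $\CQ$-arrows. So the write-up should be short: state the hat of $\uphi \circ \sY_\bbA$ pointwise, apply Yoneda, invoke the definition of $\uphi$, and simplify; then dualize.
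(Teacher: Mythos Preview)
Your proposal is correct and follows essentially the same route as the paper: unwind the hat pointwise, apply the definition of $\uphi$ (resp.\ $\dphi$), and reduce to the Yoneda-type identity $\phi(-,y)\lda\bbA(-,x)=\phi(x,y)$ (resp.\ $\bbB(y,-)\rda\phi(x,-)=\phi(x,y)$). The paper presents both halves in a single chain of equalities and treats these last identities as immediate, without the explicit infimum argument you sketch; note that in your direct computation the labels ``$\geq$ direction'' and ``$\leq$ direction'' are swapped---the distributor axiom gives $\phi(x,y)\leq\phi(a,y)\lda\bbA(a,x)$ for every $a$, hence $\bigwedge\geq\phi(x,y)$, while the $a=x$ term combined with $\bbA(x,x)\geq 1_{tx}$ and antitonicity of $\lda$ in its second argument gives $\bigwedge\leq\phi(x,y)$.
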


\begin{proof}
For all $x\in\bbA_0$ and $y\in\bbB_0$,
\begin{align*}
\ulc\uphi\circ\sY_{\bbA}\urc(x,y)&=(\uphi\circ\sY_{\bbA}x)(y)\\
&=(\phi\lda(\sY_{\bbA}x))(y)\\
&=\phi(-,y)\lda\bbA(-,x)\\
&=\phi(x,y)\\
&=\bbB(y,-)\rda\phi(x,-)\\
&=((\sYd_{\bbB}y)\rda\phi)(x)\\
&=(\dphi\circ\sYd_{\bbB}y)(x)\\
&=\ulc\dphi\circ\sYd_{\bbB}\urc(x,y),
\end{align*} showing that the conclusion holds.
\end{proof}

\begin{thm} \label{Isbell_distributor_bijection}
The correspondence $\phi\mapsto\uphi$ is an isomorphism of posets
$$\CQ\text{-}{\bf Dist}(\bbA,\bbB)\cong\CQ\text{-}{\bf CCat}^{\rm co}(\PA,\PdB),$$
where the "${\rm co}$" means reversing order in the hom-sets.
\end{thm}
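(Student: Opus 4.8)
The plan is to show that $\phi\mapsto\uphi$ is a bijection $\CQ\text{-}{\bf Dist}(\bbA,\bbB)\to\CQ\text{-}{\bf CCat}(\PA,\PdB)$ which reverses order, the order reversal being exactly what the superscript ``${\rm co}$'' records on the right-hand side. That $\uphi$ really is an object of $\CQ\text{-}{\bf CCat}(\PA,\PdB)$ is immediate from Proposition \ref{uphi-dphi-adjunction}: since $\uphi\dv\dphi$, the $\CQ$-functor $\uphi:\PA\lra\PdB$ is a left adjoint between complete $\CQ$-categories (Example \ref{PX_PA_complete}), hence preserves suprema by Corollary \ref{left_adjoint_preserves_sup}.

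Next I would establish, for $\phi,\psi\in\CQ\text{-}{\bf Dist}(\bbA,\bbB)$, the equivalence $\phi\leq\psi\iff\upsi\leq\uphi$ (the latter as $\CQ$-functors $\PA\lra\PdB$), which simultaneously yields monotonicity, its converse, and injectivity. Forward: $\phi\leq\psi$ gives $\uphi(\mu)=\phi\lda\mu\leq\psi\lda\mu=\upsi(\mu)$ in $\CQ\text{-}{\bf Dist}$, hence $\upsi(\mu)\leq\uphi(\mu)$ in $\PdB$ for every $\mu\in\PA$ by Remark \ref{PdA_QDist_order}. Converse: testing $\upsi\leq\uphi$ on a representable $\sY_{\bbA}x$ and evaluating at $y\in\bbB_0$, Remark \ref{PdA_QDist_order} together with Proposition \ref{uphi_dphi_Yoneda} (which identifies $\uphi(\sY_{\bbA}x)(y)$ with $\ulc\uphi\circ\sY_{\bbA}\urc(x,y)=\phi(x,y)$, and likewise for $\psi$) gives $\phi(x,y)\leq\psi(x,y)$. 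In particular $\phi=\ulc\uphi\circ\sY_{\bbA}\urc$ is recoverable from $\uphi$, so the map is injective, and the displayed equivalence says precisely that it is an order isomorphism onto its image in $\CQ\text{-}{\bf CCat}^{\rm co}(\PA,\PdB)$.

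It remains to prove surjectivity, which is the crux. Given $\Phi\in\CQ\text{-}{\bf CCat}(\PA,\PdB)$, set $\phi:=\ulc\Phi\circ\sY_{\bbA}\urc$, i.e.\ $\phi(x,y)=\Phi(\sY_{\bbA}x)(y)$. This is a $\CQ$-distributor because $\Phi\circ\sY_{\bbA}:\bbA\lra\PdB$ is a $\CQ$-functor and, as recalled before Proposition \ref{uphi_dphi_Yoneda}, such functors induce distributors $\bbA\olra\bbB$; concretely, distributor axiom (1) records that each $\Phi(\sY_{\bbA}x)$ is a covariant presheaf, and axiom (2) unwinds to $\bbA(x,x')\leq\PdB(\Phi\sY_{\bbA}x,\Phi\sY_{\bbA}x')$, valid since $\Phi\circ\sY_{\bbA}$ is a $\CQ$-functor. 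Now $\ulc\uphi\circ\sY_{\bbA}\urc=\phi=\ulc\Phi\circ\sY_{\bbA}\urc$ by Proposition \ref{uphi_dphi_Yoneda}, and since $G\mapsto\ulc G\urc$ is injective on $\CQ$-functors $\bbA\lra\PdB$ (it recovers $G$ pointwise) we get $\uphi\circ\sY_{\bbA}=\Phi\circ\sY_{\bbA}$. Both $\uphi$ and $\Phi$ are left adjoints out of $\PA$, the free cocompletion of $\bbA$, and are therefore determined by their restriction along $\sY_{\bbA}$: for $\mu\in\PA$ one has $\mu={\sup}_{\PA}\circ\sY_{\bbA}^{\ra}(\mu)$ by Equation (\ref{mu_sup_ymu}), so using Corollary \ref{left_adjoint_preserves_sup} and functoriality of $\CP$,
$$\Phi(\mu)=\Phi\bigl({\sup}_{\PA}\circ\sY_{\bbA}^{\ra}(\mu)\bigr)={\sup}_{\PdB}(\Phi\circ\sY_{\bbA})^{\ra}(\mu)={\sup}_{\PdB}(\uphi\circ\sY_{\bbA})^{\ra}(\mu)=\uphi(\mu),$$
whence $\Phi=\uphi$ and the correspondence is onto. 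The one step that is more than bookkeeping is this last reduction --- ``coincide along $\sY_{\bbA}$, and both preserve suprema out of the free cocompletion $\PA$, hence coincide'' --- while verifying that $\ulc\Phi\circ\sY_{\bbA}\urc$ satisfies the distributor axioms is routine and everything else is a direct citation.
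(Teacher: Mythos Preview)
Your proof is correct and follows essentially the same approach as the paper's. Both define the inverse correspondence by $F\mapsto\ulc F\circ\sY_{\bbA}\urc$ and verify it is inverse to $\phi\mapsto\uphi$ using that a left adjoint out of $\PA$ is determined by its restriction along $\sY_{\bbA}$; the only cosmetic difference is that the paper carries this out by explicitly writing $\mu=\bv_{x}\mu(x)\circ\sY_{\bbA}x$ and invoking preservation of tensors and joins (Proposition~\ref{F_la_ra_condition}), whereas you phrase the same step abstractly via $\mu={\sup}_{\PA}\sY_{\bbA}^{\ra}(\mu)$ and suprema-preservation (Corollary~\ref{left_adjoint_preserves_sup}).
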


\begin{proof}
Let $F:\PA\lra\PdB$ be a left adjoint $\CQ$-functor. We show that the correspondence $F\mapsto\ulc F\circ\sY_{\bbA}\urc$ is an inverse of the correspondence $\phi\mapsto\uphi$, and thus they are both isomorphisms of posets between $\CQ\text{-}{\bf Dist}(\bbA,\bbB)$ and $\CQ\text{-}{\bf CCat}^{\rm co}(\PA,\PdB)$.

Firstly, we show that both of the correspondences are order-preserving. Indeed,
\begin{align*}
&\phi\leq\psi\ \text{in}\ \CQ\text{-}{\bf Dist}(\bbA,\bbB)\\
\iff&\forall\mu\in\PA,\uphi(\mu)=\phi\lda\mu\leq\psi\lda\mu=\upsi(\mu)\ \text{in}\ \CQ\text{-}{\bf Dist}\\
\iff&\forall\mu\in\PA,\uphi(\mu)\geq\upsi(\mu)\ \text{in}\ (\PdB)_0\\
\iff&\uphi\leq\upsi\ \text{in}\ \CQ\text{-}{\bf CCat}^{\rm co}(\PA,\PdB)
\end{align*}
and
\begin{align*}
&F\leq G\ \text{in}\ \CQ\text{-}{\bf CCat}^{\rm co}(\PA,\PdB)\\
\iff&\forall\mu\in\PA,F(\mu)\geq G(\mu)\ \text{in}\ (\PdB)_0\\
\iff&\forall\mu\in\PA,F(\mu)\leq G(\mu)\ \text{in}\ \CQ\text{-}{\bf Dist}(\bbA,\bbB)\\
\Longrightarrow{}&\forall x\in\bbA_0,\ulc F\circ\sY_{\bbA}\urc(x,-)=F\circ\sY_{\bbA}x\leq G\circ\sY_{\bbA}x=\ulc G\circ\sY_{\bbA}\urc(x,-)\ \text{in}\ \CQ\text{-}{\bf Dist}(\bbA,\bbB)\\
\iff&\ulc F\circ\sY_{\bbA}\urc\leq\ulc G\circ\sY_{\bbA}\urc\ \text{in}\ \CQ\text{-}{\bf Dist}(\bbA,\bbB).
\end{align*}

Secondly,  $F=(\ulc F\circ\sY_{\bbA}\urc)_{\ua}$. For all $\mu\in\PA$, since $F$ is a left adjoint in $\CQ$-{\bf Cat}, by Example \ref{PA_tensor} and Proposition \ref{F_la_ra_condition} we have
\begin{align*}
F(\mu)&=F(\mu\circ\bbA)\\
&=F\Big(\bv_{x\in\bbA_0}\mu(x)\circ\sY_{\bbA}x\Big)\\
&=\bw_{x\in\bbA_0}(F\circ\sY_{\bbA}x)\lda\mu(x)\\
&=\ulc F\circ\sY_{\bbA}\urc\lda\mu\\
&=(\ulc F\circ\sY_{\bbA}\urc)_{\ua}(\mu).
\end{align*}

Finally, $\phi=\ulc\uphi\circ\sY_{\bbA}\urc$. This is obtained in Proposition \ref{uphi_dphi_Yoneda}.
\end{proof}

For a $\CQ$-distributor $\phi:\bbA\olra\bbB$, one obtains two $\CQ$-functors $\raphi:\bbA\lra\PdB$ and $\laphi:\bbB\lra\PA$ by letting $\raphi x=\phi(x,-)$ for all $x\in\bbA_0$ and $\laphi y=\phi(-,y)$ for all $y\in\bbB_0$. Stubbe \cite{Stubbe_2005} shows that the maps $\phi\mapsto\raphi$ and $F\mapsto\hF$ establish an isomorphism of posets between $\CQ\text{-}{\bf Dist}(\bbA,\bbB)$ and $\CQ\text{-}{\bf Cat}^{\rm co}(\bbA,\PdB)$, while the maps $\phi\mapsto\laphi$ and $F\mapsto\hF$ establish an isomorphism of posets between $\CQ\text{-}{\bf Dist}(\bbA,\bbB)$ and $\CQ\text{-}{\bf Cat}(\bbB,\PA)$. Together with  Theorem \ref{Isbell_distributor_bijection}, we have the following isomorphisms of posets
\begin{equation} \label{Isbell_isomorphism}
\CQ\text{-}{\bf Dist}(\bbA,\bbB)\cong\CQ\text{-}{\bf Cat}^{\rm co}(\bbA,\PdB)\cong\CQ\text{-}{\bf Cat}(\bbB,\PA)\cong\CQ\text{-}{\bf CCat}^{\rm co}(\PA,\PdB).
\end{equation}

Given a $\CQ$-distributor $\phi:\bbA\olra\bbB$,  it follows from Example \ref{adjunction_closure_interior} that $\dphi\circ\uphi:\PA\lra\PA$ is a $\CQ$-closure operator and $\uphi\circ\dphi:\PdB\lra\PdB$ is a $\CQ$-interior operator. For each $y\in\bbB_0$, since
\begin{equation} \label{phiF_fixed}
\laphi y=\phi(-,y)=\dphi\circ\sYd_{\bbB}y= \dphi\circ\uphi\circ\dphi\circ\sYd_{\bbB}y,
\end{equation}
it follows that $\laphi y=\phi(-,y)$ is closed in the $\CQ$-closure space $(\bbA, \dphi\circ\uphi)$. Dually, for all $x\in\bbA_0$,
\begin{equation} \label{Fphi_fixed}
\raphi x=\phi(x,-)=\uphi\circ\sY_{\bbA}x=\uphi\circ\dphi\circ\uphi\circ\sY_{\bbA}x
\end{equation}
is a fixed point of the $\CQ$-interior operator $\uphi\circ\dphi$. These facts will be used in the proofs of Theorem \ref{F_U_adjunction} and Theorem \ref{complte_category_sup_dense}.

\begin{prop} \label{F_G_info_F_uphi_dphi_continuous}
Let $(F,G):\phi\lra\psi$ be an infomorphism between $\CQ$-distributors $\phi:\bbA\olra\bbB$ and $\psi:\bbA'\olra\bbB'$. Then $F:(\bbA,\uphi\circ\dphi)\lra(\bbA',\upsi\circ\dpsi)$ is a continuous $\CQ$-functor.
\end{prop}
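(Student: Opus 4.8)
The plan is to verify the defining inequality for continuity, namely
\[ F^{\ra}\circ(\dphi\circ\uphi)\leq(\dpsi\circ\upsi)\circ F^{\ra} \]
between $\CQ$-functors $\PA\lra\PA'$ (recall from Example \ref{adjunction_closure_interior} that $\dphi\circ\uphi$ and $\dpsi\circ\upsi$ are the $\CQ$-closure operators carried by $\phi$ and $\psi$). The strategy is to route this through two auxiliary relations that transport the Isbell data of $\phi$ to that of $\psi$ along $(F,G)$. Writing $G^{\La}:\PdB\lra\PdB'$, $G^{\La}(\lam)=G^{\natural}\circ\lam$, for the $\CQ$-functor induced by $G:\bbB'\lra\bbB$ on covariant presheaves, these are
\[ \upsi\circ F^{\ra}=G^{\La}\circ\uphi \qquad\text{and}\qquad F^{\ra}\circ\dphi\leq\dpsi\circ G^{\La}, \]
an identity of $\CQ$-functors $\PA\lra\PdB'$ and an inequality of $\CQ$-functors $\PdB\lra\PA'$.

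To obtain the first, fix $\mu\in\PA$, so $\upsi(F^{\ra}(\mu))=\psi\lda(\mu\circ F^{\natural})$. Applying Proposition \ref{graph_cograph_implication}(2) to the adjunction $F_{\natural}\dv F^{\natural}$ rewrites this as $(\psi\circ F_{\natural})\lda\mu$, whereupon the infomorphism identity $\psi\circ F_{\natural}=G^{\natural}\circ\phi$ turns it into $(G^{\natural}\circ\phi)\lda\mu$; a second application, this time of Proposition \ref{graph_cograph_implication}(3) to $G_{\natural}\dv G^{\natural}$, rewrites it as $G^{\natural}\circ(\phi\lda\mu)=G^{\natural}\circ\uphi(\mu)=G^{\La}(\uphi(\mu))$.

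For the second relation, fix $\lam\in\PdB$; both sides are then distributors $\bbA'\olra *_{t\lam}$, and the asserted inequality is in the local order of $\CQ$-{\bf Dist} (note $\dphi,\dpsi$ reverse that order, which fixes the direction). By the adjunction $(G^{\natural}\circ\lam)\circ-\;\dv\;(G^{\natural}\circ\lam)\rda-$ in $\CQ$-{\bf Dist} it suffices to prove $(G^{\natural}\circ\lam)\circ\big((\lam\rda\phi)\circ F^{\natural}\big)\leq\psi$. Associativity turns the left-hand side into $G^{\natural}\circ\big(\lam\circ(\lam\rda\phi)\big)\circ F^{\natural}$, which by $\lam\circ(\lam\rda\phi)\leq\phi$ is bounded above by $G^{\natural}\circ\phi\circ F^{\natural}$; the infomorphism identity together with the counit $F_{\natural}\circ F^{\natural}\leq\bbA'$ of $F_{\natural}\dv F^{\natural}$ then gives $G^{\natural}\circ\phi\circ F^{\natural}=\psi\circ(F_{\natural}\circ F^{\natural})\leq\psi$, as wanted.

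Finally, substituting $\lam=\uphi(\mu)$ into the second relation and then invoking the first yields, for every $\mu\in\PA$,
\[ F^{\ra}\big(\dphi(\uphi(\mu))\big)\leq\dpsi\big(G^{\La}(\uphi(\mu))\big)=\dpsi\big(\upsi(F^{\ra}(\mu))\big), \]
which is precisely the continuity inequality. I expect the main obstacle to be purely clerical: applying the correct clause of Proposition \ref{graph_cograph_implication} to the correct induced adjunction ($F_{\natural}\dv F^{\natural}$ versus $G_{\natural}\dv G^{\natural}$), on the correct side, and keeping the variances of $\uphi,\dphi,\upsi,\dpsi$ straight so the final chain is oriented properly. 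As an alternative that avoids the inequality, one can use the earlier lemma that continuity of $F$ amounts to $F^{\la}$ preserving closedness: a closed $\lam\in\CM(\psi)$ satisfies $\lam=\dpsi(\upsi(\lam))$, and the same implication identities give $F^{\la}(\lam)=\dphi\big(G^{\Ra}(\upsi(\lam))\big)$ (where $G^{\Ra}(\xi)=G_{\natural}\circ\xi$), which lies in the image of $\dphi$ and hence in $\CM(\phi)$.
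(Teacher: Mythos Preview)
Your proof is correct and follows essentially the same route as the paper: both decompose the continuity inequality into the identity $\upsi\circ F^{\ra}=G^{\La}\circ\uphi$ and the inequality $F^{\ra}\circ\dphi\leq\dpsi\circ G^{\La}$, then combine them. The only difference is local: for the second relation you transpose via the adjunction $(G^{\natural}\circ\lam)\circ-\dashv(G^{\natural}\circ\lam)\rda-$ and use $\lam\circ(\lam\rda\phi)\leq\phi$, whereas the paper runs a direct chain starting from $\lam\rda\phi\leq(G^{\natural}\circ\lam)\rda(G^{\natural}\circ\phi)$; the paper also records (unnecessarily for this proposition) that each of the two relations is in fact equivalent to the infomorphism condition.
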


\begin{proof}
Consider the following diagram:
$$\bfig
\square|alrb|[\PA`\PdB`\PA'`\PdB';\uphi`F^{\ra}`G^{\La}`\upsi]
\square(500,0)/>``>`>/[\PdB`\PA`\PdB'`\PA';\dphi``F^{\ra}`\dpsi]
\efig$$
We must prove $F^{\ra}\circ\dphi\circ\uphi\leq\dpsi\circ\upsi\circ F^{\ra}$. To this end, it suffices to check that

(a) the left square commutes if and only if $(F,G):\phi\lra\psi$ is an infomorphism; and

(b) $F^{\ra}\circ\dphi\leq\dpsi\circ G^{\La}$ if and only if $G^{\natural}\circ\phi\leq\psi\circ F_{\natural}$.

For (a), suppose $G^{\La}\circ\uphi=\upsi\circ F^{\ra}$, then for all $x\in\bbA_0$,
\begin{align*}
G^{\natural}\circ\phi(x,-)&=G^{\La}(\phi(x,-))&(\text{by the definition of}\ G^{\La})\\
&=G^{\La}(\uphi\circ\sY_{\bbA}x)&\text{(by Proposition \ref{uphi_dphi_Yoneda})}\\
&=\upsi(F^{\ra}\circ\sY_{\bbA}x)\\
&=\upsi(\sY_{\bbA}x\circ F^{\natural})&(\text{by the definition of}\ F^{\ra})\\
&=\psi\lda(\sY_{\bbA}x\circ F^{\natural})&(\text{by the definition of}\ \upsi)\\
&=(\psi\circ F_{\natural})\lda\bbA(-,x)&(\text{by Proposition \ref{graph_cograph_implication}(2)})\\
&=\psi\circ F_{\natural}(x,-).
\end{align*}

Conversely, if $(F,G):\phi\lra\psi$ is an infomorphism, then for all $\mu\in\PA$,
\begin{align*}
G^{\La}\circ\uphi(\mu)&=G^{\natural}\circ(\phi\lda\mu)&(\text{by the definition of}\ G^{\La}\ \text{and}\ \uphi)\\
&=(G^{\natural}\circ\phi)\lda\mu&(\text{by Proposition \ref{graph_cograph_implication}(3)})\\
&=(\psi\circ F_{\natural})\lda\mu\\
&=\psi\lda(\mu\circ F^{\natural})&(\text{by Proposition \ref{graph_cograph_implication}(2)})\\
&=\upsi\circ F^{\ra}(\mu).
\end{align*}

For (b), suppose $F^{\ra}\circ\dphi\leq\dpsi\circ G^{\La}$, then for all $y'\in\bbB'_0$,
\begin{align*}
G^{\natural}(-,y')\circ\phi&=G_{\natural}(y',-)\rda\phi&(\text{by Proposition \ref{graph_cograph_implication}(1)})\\
&=\dphi(G_{\natural}(y',-))&(\text{by the definition of}\ \dphi)\\
&\leq F^{\la}\circ F^{\ra}\circ\dphi(G_{\natural}(y',-))&(\text{since}\ F^{\ra}\dv F^{\la}:\PA\rhu\PA')\\
&\leq F^{\la}\circ\dpsi\circ G^{\La}(G_{\natural}(y',-))\\
&=F^{\la}\circ\dpsi\circ G^{\La}\circ G^{\Ra}\circ\sYd_{\bbB'}y'&(\text{by the definition of}\ G^{\Ra})\\
&\leq F^{\la}\circ\dpsi\circ\sYd_{\bbB'}y'&\text{(by Inequality (\ref{FLa_FRa_adjuntion}) and (\ref{dphi_contravariant}))}\\
&=F^{\la}(\psi(-,y'))&\text{(by Proposition \ref{uphi_dphi_Yoneda})}\\
&=\psi(-,y')\circ F_{\natural}.&(\text{by the definition of}\ F^{\la})
\end{align*}

Conversely, if $G^{\natural}\circ\phi\leq\psi\circ F_{\natural}$, then for all $\lam\in\PdB$,
\begin{align*}
F^{\ra}\circ\dphi(\lam)&=(\lam\rda\phi)\circ F^{\natural}&(\text{by the definition of}\ F^{\ra}\ \text{and}\ \dphi)\\
&\leq((G^{\natural}\circ\lam)\rda(G^{\natural}\circ\phi))\circ F^{\natural}\\
&\leq((G^{\natural}\circ\lam)\rda(\psi\circ F_{\natural}))\circ F^{\natural}\\
&\leq(G^{\natural}\circ\lam)\rda(\psi\circ F_{\natural}\circ F^{\natural})\\
&\leq(G^{\natural}\circ\lam)\rda\psi&(\text{since}\ F_{\natural}\dv F^{\natural}:\bbA\rhu\bbB\ \text{in}\ \CQ\text{-}{\bf Dist})\\
&=\dpsi\circ G^{\La}(\lam).&(\text{by the definition of}\ \dpsi\ \text{and}\ G^{\La})
\end{align*}
This completes the proof.
\end{proof}

By virtue of Proposition \ref{F_G_info_F_uphi_dphi_continuous} we obtain a functor $\CU:\CQ\text{-}{\bf Info}\lra\CQ\text{-}{\bf Cls}$ that sends an infomorphism
$$(F,G):(\phi:\bbA\olra\bbB)\lra(\psi:\bbA'\olra\bbB')$$
to a continuous $\CQ$-functor
$$F:(\bbA,\dphi\circ\uphi)\lra(\bbA',\dpsi\circ\upsi).$$

Given a $\CQ$-closure space $(\bbA,C)$, define a $\CQ$-distributor $\zeta_C:\bbA\olra C(\PA)$ by
$$\zeta_C(x,\mu)=\mu(x)$$
for all $x\in\bbA_0$ and $\mu\in C(\PA)$. It is clear that $\zeta_C$ is obtained by restricting the domain and the codomain of the $\CQ$-distributor
$$\PdA\olra\PA, \quad (\lam,\mu)\mapsto \mu\circ\lam. $$

Given a continuous $\CQ$-functor $F:(\bbA,C)\lra(\bbB,D)$ between $\CQ$-closure spaces, consider the $\CQ$-functor $F^{\triangleleft}:D(\PB)\lra C(\PA)$ that sends each closed contravariant presheaf $\lam$ to $F^{\triangleleft}(\lam)=F^{\la}(\lam)$. Then similar to Proposition \ref{Y_functor_Cat_Info} one can check that
$$(F,F^{\triangleleft}):(\zeta_C:\bbA\olra C(\PA))\lra(\zeta_D:\bbB\olra D(\PB))$$
is an infomorphism. Thus, we obtain a functor $\CF:\CQ\text{-}{\bf Cls}\lra\CQ\text{-}{\bf Info}$.

\begin{thm} \label{F_U_adjunction}
$\CF:\CQ\text{-}{\bf Cls}\lra\CQ\text{-}{\bf Info}$ is a left adjoint and right inverse of $\CU:\CQ\text{-}{\bf Info}\lra\CQ\text{-}{\bf Cls}$.
\end{thm}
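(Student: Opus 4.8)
The plan is to exhibit the adjunction $\CF \dv \CU$ with $\CF$ also a right inverse, i.e., $\CU \circ \CF = {\bf id}_{\CQ\text{-}{\bf Cls}}$. First I would verify the right-inverse claim: for a $\CQ$-closure space $(\bbA,C)$, I must identify $\CU\circ\CF(\bbA,C)$ with $(\bbA,C)$ itself. By definition $\CF(\bbA,C)$ is the $\CQ$-distributor $\zeta_C:\bbA\olra C(\PA)$ with $\zeta_C(x,\mu)=\mu(x)$, and $\CU$ sends this to $(\bbA,\dzeta_C\circ\uzeta_C)$. So the heart of this step is the computation $\dzeta_C\circ\uzeta_C = C$ as $\CQ$-functors $\PA\lra\PA$. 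Unwinding definitions, for $\nu\in\PA$ one has $\uzeta_C(\nu)=\zeta_C\lda\nu$, a covariant presheaf on $C(\PA)$, and $\dzeta_C(\uzeta_C(\nu))=(\zeta_C\lda\nu)\rda\zeta_C$. I expect this to evaluate, via the Yoneda lemma in $\PA$ together with the description of $\zeta_C$ as the restriction of the evaluation distributor $\PdA\olra\PA$, to $\bigwedge\{\mu\in C(\PA)\mid \nu\leq\mu\}$, which is exactly $C(\nu)$ because $C(\PA)$ is a $\CQ$-closure system of $\PA$ (Proposition \ref{closure_interior_system_operator}) and $C$ is its associated closure operator. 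One should also check that $\CU\circ\CF$ acts as the identity on continuous $\CQ$-functors, which is immediate since both $\CF$ and $\CU$ act as $F\mapsto F$ on the first component.

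Next I would construct the unit and counit. Since $\CU\circ\CF={\bf id}$, the counit $\epsilon:\CF\circ\CU\Rightarrow{\bf id}_{\CQ\text{-}{\bf Info}}$ is the datum to produce, and the adjunction identities then reduce to a single triangle. Given an infomorphism $(F,G):(\phi:\bbA\olra\bbB)\lra(\psi:\bbA'\olra\bbB')$, applying $\CU$ gives the $\CQ$-closure space $(\bbA,\dphi\circ\uphi)$, and applying $\CF$ to that gives the distributor $\zeta_{\dphi\circ\uphi}:\bbA\olra (\dphi\circ\uphi)(\PA)=\CM(\phi)$. I need an infomorphism $\epsilon_\phi:\zeta_{\dphi\circ\uphi}\lra\phi$, i.e. a pair of $\CQ$-functors $\bbA\lra\bbA$ (take the identity) and $\bbB\lra\CM(\phi)$ satisfying the infomorphism equation. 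The natural candidate $\bbB\lra\CM(\phi)$ is $y\mapsto\laphi y=\phi(-,y)$, which lands in $\CM(\phi)=(\dphi\circ\uphi)(\PA)$ by Equation (\ref{phiF_fixed}); the infomorphism condition $\zeta_{\dphi\circ\uphi}(x,\laphi y)=\phi(x,y)$ is then just $\phi(-,y)$ evaluated at $x$, which is $\phi(x,y)$. So $\epsilon_\phi=(1_\bbA,\laphi)$.

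Then I would verify the universal property: for any $\CQ$-closure space $(\bbA,C)$ and infomorphism $(F,G):\zeta_C\lra\psi$ (with $\psi:\bbA'\olra\bbB'$), there is a unique continuous $\CQ$-functor $H:(\bbA,C)\lra\CU(\psi)=(\bbA',\dpsi\circ\upsi)$ with $\CF(H)$ followed by $\epsilon_\psi$ equal to $(F,G)$. The forced choice is $H=F$ on objects; continuity $F^\ra\circ C\leq(\dpsi\circ\upsi)\circ F^\ra$ must be extracted from the infomorphism equation $G^\natural\circ\zeta_C=\psi\circ F_\natural$ — this is the analogue of Proposition \ref{F_G_info_F_uphi_dphi_continuous} in this setting. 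Uniqueness follows because the $\bbB'$-component of the original infomorphism is then determined: chasing $\epsilon_\psi\circ\CF(H)=(F,G)$ on second components forces $G=\lapsi\circ(\text{the }\CF H\text{-component})$, and the $\CF$-component of any infomorphism out of $\zeta_C$ is pinned down by the argument already used in Propositions \ref{Y_U_adjunction} and \ref{Y_U_adjunction_contravariant}.

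The main obstacle will be the first step, the identity $\dzeta_C\circ\uzeta_C=C$: one must correctly handle the interplay between the evaluation distributor $\PdA\olra\PA$, its restriction $\zeta_C$, and the left/right implications, using the Yoneda lemma to see that $\uzeta_C(\nu)$ is the covariant presheaf on $C(\PA)$ sending $\mu\mapsto\PA(\nu,\mu)$ and that $\dzeta_C$ of it recovers the meet in $C(\PA)$ of all closed presheaves above $\nu$. Once that is in place, the adjunction is formal: it is essentially the statement that $\CF$ realizes $(\bbA,C)$ as the "free" distributor whose generated closure operator is $C$, parallel to how $\BY$ in Proposition \ref{Y_U_adjunction} realizes $\bbA$ via $(\sY_\bbA)_\natural$, and the rest of the verification reuses the implication identities of Proposition \ref{graph_cograph_implication} exactly as in Proposition \ref{F_G_info_F_uphi_dphi_continuous}.
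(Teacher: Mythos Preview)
Your proposal is correct and closely parallels the paper's argument. The key computation --- that $\zeta_C^{\da}\circ(\zeta_C)_{\ua}=C$ --- is the same; the paper proves it by showing directly that $C(\nu)=\bigwedge_{\lam\in C(\PA)}(\lam\lda\nu)\rda\lam$. One minor imprecision in your sketch: in the enriched setting this weighted formula is what one must establish, not the plain meet $\bigwedge\{\mu\in C(\PA)\mid\nu\leq\mu\}$ you mention (the two coincide only when $\CQ={\bf 2}$).

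The one real difference is in how the adjunction is verified. The paper checks the universal property of the \emph{unit}, which is the identity since $\CU\circ\CF={\bf id}$: given a continuous $H:(\bbA,C)\to(\bbA',\dpsi\circ\upsi)$, it constructs the unique infomorphism $(H,G):\zeta_C\to\psi$ by setting $G=H^{\triangleleft}\circ\lapsi$. You instead check the universal property of the \emph{counit} $\epsilon_\psi=(1_{\bbA'},\lapsi)$: given an infomorphism $(F,G):\zeta_C\to\psi$, you recover the unique continuous $H=F$. Your direction is slightly less direct, since deducing continuity of $F$ from the infomorphism data requires invoking Proposition~\ref{F_G_info_F_uphi_dphi_continuous} (applied with $\phi=\zeta_C$, then using Step~1 to identify $\zeta_C^{\da}\circ(\zeta_C)_{\ua}$ with $C$), whereas the paper starts from continuity and only needs a one-line verification of the infomorphism equation. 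Both routes are sound and give the same bijection.
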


\begin{proof}
{\bf Step 1.} $\CF$ is a right inverse of $\CU$.

For each $\CQ$-closure space $(\bbA,C)$, by the definition of the functor $\CF$, $\CF(\bbA,C)$ is the $\CQ$-distributor $\zeta_C:\bbA\olra C(\PA)$, where $\zeta_C(x,\mu)=\mu(x)$ for all $x\in\bbA_0$ and $\mu\in C(\PA)$. In order to prove $\CU\circ\CF(\bbA,C)=(\bbA,C)$, we show that $C=\zeta_C^\da\circ(\zeta_C)_\ua$.

For all $\mu\in\PA$ and $\lam\in C(\PA)$, since $C$ is a $\CQ$-functor,
$$\lam\lda\mu=\PA(\mu,\lam)\leq\PA(C(\mu),\lam)=\lam\lda C(\mu),$$
and consequently $C(\mu)\leq(\lam\lda\mu)\rda\lam$. Since $C$ is a $\CQ$-closure operator, we have
$$(C(\mu)\lda\mu)\rda C(\mu)\leq 1_{t\mu}\rda C(\mu)=C(\mu),$$
hence
\begin{align*}
C(\mu)&=\bw_{\lam\in C(\PA)}(\lam\lda\mu)\rda\lam\\
&=\bw_{\lam\in C(\PA)}(\zeta_C(-,\lam)\lda\mu)\rda\zeta_C(-,\lam)\\
&=\bw_{\lam\in C(\PA)}(\zeta_C)_\ua(\mu)(\lam)\rda\zeta_C(-,\lam)\\
&=\zeta_C^\da\circ(\zeta_C)_\ua(\mu),
\end{align*}
as required.

{\bf Step 2.} $\CF$ is a left adjoint of $\CU$.

For each $\CQ$-closure space $(\bbA,C)$, ${\rm id}_{(\bbA,C)}:(\bbA,C)\lra\CU\circ\CF(\bbA,C)$ is clearly a continuous $\CQ$-functor and $\{{\rm id}_{(\bbA,C)}\}$ is a natural transformation from the identity functor on $\CQ$-{\bf Cls} to $\CU\circ\CF$. Thus, it remains to show that for each $\CQ$-distributor $\psi:\bbA'\olra\bbB'$ and each continuous $\CQ$-functor $H:(\bbA,C)\lra(\bbA',\dpsi\circ\upsi)$, there is a unique infomorphism
$$(F,G):\CF(\bbA,C)\lra(\psi:\bbA'\olra\bbB')$$
such that the diagram
$$\bfig
\qtriangle<700,500>[(\bbA,C)`\CU\circ\CF(\bbA,C)`(\bbA', \dpsi\circ\upsi);{\rm id}_{(\bbA,C)}`H`\CU(F,G)]
\efig$$
is commutative.

By definition, $\CF(\bbA,C)=\zeta_C:\bbA\olra C(\PA)$ and $\CU(F,G)=F$, where $\zeta_C(x,\mu)=\mu(x)$. Thus, we only need to show that there is a unique $\CQ$-functor $G:\bbB'\lra C(\PA)$ such that
$$(H,G):(\zeta_C:\bbA\olra C(\PA))\lra(\psi:\bbA'\olra\bbB')$$
is an infomorphism.

Let $G=H^{\triangleleft}\circ\lapsi:\bbB'\lra C(\PA)$. That $G$ is well-defined follows from the fact that $\lapsi y'\in\dpsi\circ\upsi(\PA')$ for all $y'\in\bbB'_0$ by Equation (\ref{phiF_fixed}) and  that $H:(\bbA,C)\lra(\bbA',\dpsi\circ\upsi)$ is  continuous.   Now we check that
$$(H,G):(\zeta_C:\bbA\olra C(\PA))\lra(\psi:\bbA'\olra\bbB')$$
is an infomorphism. This is easy since
$$\zeta_C(x,Gy')=(Gy')(x)=H^{\triangleleft}\circ\lapsi(y')(x) =\lapsi(y')(Hx)=\psi(Hx,y')$$
for all $x\in\bbA_0$ and $y'\in\bbB'_0$. This proves the existence of $G$.

To see the uniqueness of $G$, suppose that $G':\bbB'\lra C(\PA)$ is another $\CQ$-functor such that
$$(H,G'):(\zeta_C:\bbA\olra C(\PA))\lra(\psi:\bbA'\olra\bbB')$$
is an infomorphism. Then for all $x\in\bbA_0$ and $y'\in\bbB'_0$,
$$(G'y')(x)=\zeta_C(x,G'y')=\psi(Hx,y')=\lapsi(y')(Hx) =H^{\triangleleft}\circ\lapsi(y')(x)=(Gy')(x),$$
hence $G'=G$.
\end{proof}

\begin{cor} \label{Cls_coreflective_Info}
The category $\CQ$-{\bf Cls} is a coreflective subcategory of $\CQ$-{\bf Info}.
\end{cor}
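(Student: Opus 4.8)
The plan is to read this off formally from Theorem \ref{F_U_adjunction}, using the standard fact that a fully faithful functor possessing a right adjoint realizes its domain as a coreflective full subcategory of its codomain, the coreflector being precisely that right adjoint (transported along the embedding). Since Theorem \ref{F_U_adjunction} already furnishes $\CU:\CQ\text{-}{\bf Info}\lra\CQ\text{-}{\bf Cls}$ as a right adjoint of $\CF:\CQ\text{-}{\bf Cls}\lra\CQ\text{-}{\bf Info}$, the only thing left to check is that $\CF$ is fully faithful.

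To see this I would invoke that $\CF\dv\CU$ and that, by Step 2 of the proof of Theorem \ref{F_U_adjunction}, the unit of this adjunction is the family $\{{\rm id}_{(\bbA,C)}\}$ of identity $\CQ$-functors, which is in particular a natural isomorphism. A left adjoint is fully faithful exactly when its unit is a natural isomorphism, so $\CF$ is fully faithful. Concretely, for $\CQ$-closure spaces $(\bbA,C)$ and $(\bbB,D)$ the adjunction bijection
$$\CQ\text{-}{\bf Info}(\CF(\bbA,C),\CF(\bbB,D))\cong\CQ\text{-}{\bf Cls}((\bbA,C),\CU\circ\CF(\bbB,D))=\CQ\text{-}{\bf Cls}((\bbA,C),(\bbB,D))$$
is, because the unit is the identity, exactly the inverse of the hom-set map induced by $\CF$.

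It then follows that $\CF$ identifies $\CQ\text{-}{\bf Cls}$ with a full subcategory of $\CQ\text{-}{\bf Info}$, namely the essential image of $\CF$, whose inclusion admits the right adjoint $\CU$; for a $\CQ$-distributor $\phi:\bbA\olra\bbB$, the counit $\CF\circ\CU(\phi)\lra\phi$ of $\CF\dv\CU$ serves as the coreflection of $\phi$ into this subcategory. This is exactly the assertion that $\CQ\text{-}{\bf Cls}$ is a coreflective subcategory of $\CQ\text{-}{\bf Info}$.

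There is no genuine obstacle here; the argument is entirely formal. The only points demanding a moment's care are confirming that the unit of $\CF\dv\CU$ is literally the identity (hence trivially an isomorphism), rather than merely invoking $\CU\circ\CF={\rm id}_{\CQ\text{-}{\bf Cls}}$ abstractly, and keeping straight that the coreflector of the embedded copy of $\CQ\text{-}{\bf Cls}$ is $\CU$ (equivalently $\CF\circ\CU$ corestricted to the essential image of $\CF$).
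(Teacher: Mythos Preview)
Your proposal is correct and matches the paper's approach: the paper states the corollary without proof, treating it as an immediate consequence of Theorem \ref{F_U_adjunction}, and your argument spells out exactly the standard reasoning (identity unit $\Rightarrow$ $\CF$ fully faithful $\Rightarrow$ coreflective subcategory) that the paper leaves implicit.
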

The composition
$$\CM=\CT\circ\CU:\CQ\text{-}{\bf Info}\lra(\CQ\text{-}{\bf CCat})_{\skel}$$
sends a $\CQ$-distributor $\phi:\bbA\olra\bbB$ to a complete $\CQ$-category $\dphi\circ\uphi(\PA)$. Conversely, since $\CF$ is a right inverse of $\CU$ (Theorem \ref{F_U_adjunction}) and $\CT$ is a left inverse of $\CD$ (up to isomorphism, Theorem \ref{T_D_adjunction}),  we have the following

\begin{thm} \label{complete_category_fca}
Every skeletal complete $\CQ$-category is isomorphic to $\CM(\phi)$ for some $\CQ$-distributor $\phi$.
\end{thm}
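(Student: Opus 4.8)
The plan is to write down the required $\CQ$-distributor explicitly in terms of the Yoneda embedding and then to obtain the statement purely formally from the two ``inverse'' relations already established, namely that $\CF$ is a right inverse of $\CU$ (Theorem \ref{F_U_adjunction}) and that $\CT$ is a left inverse of $\CD$ (Theorem \ref{T_D_adjunction}, together with the remark preceding it). So let $\bbA$ be a skeletal complete $\CQ$-category. By Theorem \ref{complete_cocomplete_equivalent} and Example \ref{adjunction_closure_interior}, $C_{\bbA}=\sY_{\bbA}\circ\sup:\PA\lra\PA$ is a $\CQ$-closure operator, so $(\bbA,C_{\bbA})=\CD(\bbA)$ is a $\CQ$-closure space, and recall that $C_{\bbA}(\PA)=\{\sY_{\bbA}a\mid a\in\bbA_0\}$. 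I would take
$$\phi=\zeta_{C_{\bbA}}:\bbA\olra C_{\bbA}(\PA),\qquad\phi(x,\sY_{\bbA}a)=\bbA(x,a),$$
that is, $\phi=\CF(\bbA,C_{\bbA})$.

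Next I would unwind $\CM(\phi)$. Since $\CM=\CT\circ\CU$ and $\phi=\CF(\bbA,C_{\bbA})$, and since $\CF$ is a right inverse of $\CU$, we get $\CU(\phi)=(\bbA,C_{\bbA})=\CD(\bbA)$; applying $\CT$ and using that it is a left inverse of $\CD$, we obtain $\CM(\phi)=\CT(\CD(\bbA))=C_{\bbA}(\PA)$. It then remains to identify $C_{\bbA}(\PA)$ with $\bbA$: the Yoneda embedding $\sY_{\bbA}:\bbA\lra\PA$ is fully faithful (Lemma \ref{Yoneda_lemma}), and since $\sY_{\bbA}a\cong\sY_{\bbA}a'$ in $\PA$ if and only if $a\cong a'$ in $\bbA$, skeletality of $\bbA$ makes $\sY_{\bbA}$ injective on objects; hence $\sY_{\bbA}$ corestricts to a bijective fully faithful $\CQ$-functor $\bbA\lra C_{\bbA}(\PA)$, i.e. an isomorphism. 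Chaining the identifications yields $\bbA\cong\CM(\phi)$.

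I do not expect a genuine obstacle: essentially all the content has been packaged into Theorems \ref{T_D_adjunction} and \ref{F_U_adjunction}, and the proof is mainly a matter of tracking which category each functor maps into and of making the identification of $\bbA$ with $C_{\bbA}(\PA)$ precise through the corestricted Yoneda embedding. The only mildly delicate point is the reliance on $\bbA$ being skeletal for that last step to be an honest isomorphism rather than merely an equivalence.
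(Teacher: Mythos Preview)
Your proof is correct and follows exactly the same approach as the paper: the paper's proof is essentially the one-line observation that $\CM=\CT\circ\CU$, $\CF$ is a right inverse of $\CU$, and $\CT$ is a left inverse of $\CD$ (up to the identification of $\bbA$ with $C_{\bbA}(\PA)$ via the corestricted Yoneda embedding). You have simply unpacked that sentence in full detail, including the explicit choice $\phi=\zeta_{C_{\bbA}}=\CF\circ\CD(\bbA)$ and the verification that $\sY_{\bbA}:\bbA\lra C_{\bbA}(\PA)$ is an isomorphism when $\bbA$ is skeletal.
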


The following proposition shows that the free cocompletion functor of $\CQ$-categories factors through the functor $\CM$.

\begin{prop} \label{M_Yoneda_PA}
The diagram
$$\bfig
\qtriangle<700,500>[\CQ\text{-}{\bf Cat}`\CQ\text{-}{\bf Info}`\CQ\text{-}{\bf CCat};\BY`\CP`\CM]
\efig$$
commutes.
\end{prop}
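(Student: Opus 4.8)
The plan is to verify that $\CM\circ\BY$ and $\CP$ coincide on objects and on morphisms, so that the triangle commutes strictly. Recall that $\CM=\CT\circ\CU$, that $\BY$ sends a $\CQ$-category $\bbA$ to the graph $\phi:=(\sY_{\bbA})_{\natural}:\bbA\olra\PA$ and a $\CQ$-functor $F:\bbA\lra\bbB$ to the infomorphism $(F,F^{\la})$ (Proposition \ref{Y_functor_Cat_Info}), that $\CU$ sends $\phi$ to the $\CQ$-closure space $(\bbA,\dphi\circ\uphi)$, and that $\CT$ sends $(\bbA,C)$ to $C(\PA)$. Thus everything reduces to identifying the $\CQ$-closure operator $\dphi\circ\uphi$ on $\PA$ attached to the Yoneda graph $\phi=(\sY_{\bbA})_{\natural}$.

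The heart of the argument is the claim that $\dphi\circ\uphi=1_{\PA}$. First I would unwind $\uphi$: by the Yoneda lemma $\phi(x,\mu)=\PA(\sY_{\bbA}x,\mu)=\mu(x)$, so for $\nu,\mu\in\PA$ the defining formula of the left implication in $\CQ$-{\bf Dist} gives $\uphi(\nu)(\mu)=(\phi\lda\nu)(\mu)=\bw_{x\in\bbA_0}\mu(x)\lda\nu(x)=\PA(\nu,\mu)$; in other words $\uphi=\sYd_{\PA}:\PA\lra\CPd\PA$ is exactly the co-Yoneda embedding of $\PA$. Since $\PA$ is a complete $\CQ$-category (Example \ref{PX_PA_complete}), Theorem \ref{complete_cocomplete_equivalent} furnishes a right adjoint $\inf_{\PA}$ of $\sYd_{\PA}$; as $\uphi\dv\dphi$ (Proposition \ref{uphi-dphi-adjunction}) and right adjoints into the skeletal $\CQ$-category $\PA$ are unique, we get $\dphi=\inf_{\PA}$, whence $\dphi\circ\uphi=\inf_{\PA}\circ\sYd_{\PA}=1_{\PA}$ because $\sYd_{\PA}$ is fully faithful. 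Alternatively one can avoid the appeal to uniqueness of adjoints and argue pointwise: $\dphi\circ\uphi(\nu)(x)=\bw_{\mu\in\PA}\PA(\nu,\mu)\rda\mu(x)$, and instantiating the meet at $\mu=\nu$ together with $\PA(\nu,\nu)\geq 1_{t\nu}$ yields $\dphi\circ\uphi(\nu)(x)\leq\nu(x)$, while $\dphi\circ\uphi\geq 1_{\PA}$ since $\dphi\circ\uphi$ is a $\CQ$-closure operator by Example \ref{adjunction_closure_interior}. Consequently $\CU(\BY(\bbA))=(\bbA,1_{\PA})$ and $\CM(\BY(\bbA))=\CT(\bbA,1_{\PA})=1_{\PA}(\PA)=\PA=\CP(\bbA)$.

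For a $\CQ$-functor $F:\bbA\lra\bbB$ the same computation applied to both $\bbA$ and $\bbB$ shows that $\CU(\BY(F))$ is the continuous $\CQ$-functor $F:(\bbA,1_{\PA})\lra(\bbB,1_{\PB})$, and $\CT$ carries this to $F^{\triangleright}=1_{\PB}\circ F^{\ra}=F^{\ra}:\PA\lra\PB$, which is precisely $\CP(F)$. Hence $\CM\circ\BY=\CP$ as functors and the diagram commutes. I expect the only genuine obstacle to be the key claim $\dphi\circ\uphi=1_{\PA}$ — concretely, keeping track of the variances in $\CQ$-{\bf Dist} when rewriting $\phi\lda\nu$ and recognising it as the co-Yoneda embedding of $\PA$; once that is in hand the remainder is formal, driven by uniqueness of adjoints and skeletality of presheaf $\CQ$-categories.
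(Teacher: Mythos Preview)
Your proof is correct. Your alternative pointwise argument is essentially the paper's own proof: the paper shows $\mu=((\sY_{\bbA})_{\natural}\lda\mu)\rda(\sY_{\bbA})_{\natural}$ by observing (via Yoneda) that $(\sY_{\bbA})_{\natural}\lda\mu\geq\PA(\mu,-)$, whence $((\sY_{\bbA})_{\natural}\lda\mu)\rda(\sY_{\bbA})_{\natural}\leq\PA(\mu,-)\rda(\sY_{\bbA})_{\natural}=(\sY_{\bbA})_{\natural}(-,\mu)=\mu$, the reverse inequality being trivial; the morphism part is handled identically to yours.

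Your primary route---recognising $\uphi$ as the co-Yoneda embedding $\sYd_{\PA}$ and then invoking uniqueness of right adjoints (in the skeletal $\PA$) to conclude $\dphi=\inf_{\PA}$---is a slightly more conceptual packaging of the same computation. It explains \emph{why} the closure operator collapses to the identity: the Isbell adjunction attached to the Yoneda graph is literally the adjunction $\sYd_{\PA}\dv\inf_{\PA}$ of Theorem~\ref{complete_cocomplete_equivalent}. The paper, by contrast, simply verifies the equality by a direct inequality chase. Your approach buys a cleaner narrative and connects the result to the general machinery of completeness; the paper's buys self-containment (no appeal to uniqueness of adjoints or full-faithfulness of $\sYd$).
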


\begin{proof}
First, $\CM((\sY_{\bbA})_{\natural})= ((\sY_{\bbA})_{\natural})^{\da}\circ ((\sY_{\bbA})_{\natural})_{\ua}(\PA)=\PA$ for each $\CQ$-category $\bbA$. To see this, it suffices to check that
$$\mu= ((\sY_{\bbA})_{\natural})^{\da}\circ ((\sY_{\bbA})_{\natural})_{\ua}(\mu)= ((\sY_{\bbA})_{\natural}\lda\mu)\rda(\sY_{\bbA})_{\natural}$$
for all $\mu\in\PA$. On one hand, by Yoneda lemma we have
$$(\sY_{\bbA})_{\natural}\lda\mu=(\sY_{\bbA})_{\natural}\lda(\sY_{\bbA})_{\natural}(-,\mu)\geq\PA(\mu,-),$$
thus
$$((\sY_{\bbA})_{\natural}\lda\mu)\rda(\sY_{\bbA})_{\natural}\leq\PA(\mu,-)\rda(\sY_{\bbA})_{\natural}=(\sY_{\bbA})_{\natural}(-,\mu)=\mu.$$
On the other hand, $\mu\leq((\sY_{\bbA})_{\natural}\lda\mu)\rda(\sY_{\bbA})_{\natural}$ holds trivially.

Second, it is trivial that for each $\CQ$-functor $F:\bbA\lra\bbB$,
$$\CM\circ\BY(F)=F^\ra=\CP(F).$$

Therefore, the conclusion holds.
\end{proof}

Corollary \ref{Cls_coreflective_Info} says that the category $\CQ$-{\bf Cls} is a coreflective subcategory of $\CQ$-{\bf Info}. In the following we show that $\CQ$-{\bf Cls} is equivalent to  a subcategory of $\CQ$-{\bf Info}. This equivalence is a generalization of that between closure spaces and state property systems in \cite{Aerts1999}.

\begin{defn}
A $\CQ$-state property system is a triple $(\bbA,\bbB,\phi)$, where $\bbA$ is a $\CQ$-category, $\bbB$ is a skeletal complete $\CQ$-category and $\phi:\bbA\olra\bbB$ is a $\CQ$-distributor, such that
\begin{itemize}
\item[\rm (1)] $\phi(-,{\inf}_{\bbB}\lam)=\lam\rda\phi$ for all $\lam\in\PdB$,
\item[\rm (2)] $\bbB(y,y')=\phi(-,y')\lda\phi(-,y)$ for all $y,y'\in\bbB_0$.
\end{itemize}
\end{defn}

$\CQ$-state property systems and infomorphisms constitute a category $\CQ$-{\bf Sp}, which is a subcategory of $\CQ$-{\bf Info}.

\begin{exmp}
For each $\CQ$-closure space $(\bbA,C)$, 
$(\bbA,C(\PA),\zeta_C)$ is a $\CQ$-state property system.
First, for all $\Psi\in\CPd(C(\PA))$, it follows from Example \ref{PX_PA_complete} and Equation (\ref{closure_system_infimum})  that
\begin{align*}
\zeta_C(-,{\inf}_{C(\PA)}\Psi)&= {\inf}_{C(\PA)}\Psi\\
&= \bw_{\mu\in C(\PA)}\Psi(\mu)\rda\mu\\
&=\bw_{\mu\in C(\PA)}\Psi(\mu)\rda\zeta_C(-,\mu)\\
&=\Psi\rda\zeta_C.
\end{align*}
Second, it is trivial that
$$C(\PA)(\mu,\lam)=\lam\lda\mu=\zeta_C(-,\lam) \lda\zeta_C(-,\mu)$$
for all $\mu,\lam\in C(\PA)$.
\end{exmp}

Therefore, the codomain of the functor $\CF:\CQ\text{-}{\bf Cls}\lra\CQ\text{-}{\bf Info}$ can be restricted to the subcategory $\CQ$-{\bf Sp}.

\begin{thm} \label{F_U_equivalence}
The functors $\CF:\CQ\text{-}{\bf Cls}\lra\CQ\text{-}{\bf Sp}$ and $\CU:\CQ\text{-}{\bf Sp}\lra\CQ\text{-}{\bf Cls}$ establish an equivalence of categories.
\end{thm}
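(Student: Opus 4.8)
The plan is to upgrade the two "one-sided inverse" statements from Theorem~\ref{F_U_adjunction} into a genuine equivalence. By Theorem~\ref{F_U_adjunction}, $\CU\circ\CF={\rm id}_{\CQ\text{-}{\bf Cls}}$; this is the right-inverse part, which only uses the closure-space structure and is therefore unaffected by restricting the codomain of $\CF$ to $\CQ$-{\bf Sp}. Hence it suffices to exhibit a natural isomorphism $\CF\circ\CU\cong{\rm id}_{\CQ\text{-}{\bf Sp}}$; together with $\CU\circ\CF={\rm id}$ this is exactly an equivalence of categories.

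First I would unwind $\CF\circ\CU$ on objects. For a $\CQ$-state property system $(\bbA,\bbB,\phi)$ put $C=\dphi\circ\uphi:\PA\lra\PA$; then $\CU(\bbA,\bbB,\phi)=(\bbA,C)$ and $\CF(\bbA,C)=(\bbA,\,C(\PA),\,\zeta_C)$. The candidate isomorphism is the infomorphism $(1_{\bbA},G_\phi):(\zeta_C:\bbA\olra C(\PA))\lra(\phi:\bbA\olra\bbB)$, where $G_\phi=\laphi:\bbB\lra C(\PA)$ sends $y$ to $\phi(-,y)$. I need four properties of $G_\phi$: (i) it lands in $C(\PA)$ --- this is precisely Equation~(\ref{phiF_fixed}), which says $\phi(-,y)$ is a fixed point of $C$; (ii) it is fully faithful --- for $y,y'\in\bbB_0$ one has $C(\PA)(\phi(-,y),\phi(-,y'))=\phi(-,y')\lda\phi(-,y)$, which equals $\bbB(y,y')$ by axiom~(2) of a $\CQ$-state property system; (iii) it is injective --- if $\phi(-,y)=\phi(-,y')$ then $\bbB(y,y')\geq 1_{ty}$ and $\bbB(y',y)\geq 1_{ty}$ by (ii), so $y\cong y'$, hence $y=y'$ since $\bbB$ is skeletal; (iv) it is surjective --- given a fixed point $\mu=C(\mu)$, set $\lam=\uphi(\mu)\in\PdB$ and $y={\inf}_{\bbB}\lam$ (which exists because $\bbB$ is complete), and axiom~(1) gives $\phi(-,y)=\lam\rda\phi=\dphi(\uphi(\mu))=\mu$. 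Thus $G_\phi$ is a bijective fully faithful $\CQ$-functor, i.e.\ an isomorphism in $\CQ$-{\bf Cat}. That $(1_{\bbA},G_\phi)$ is an infomorphism is immediate from $\zeta_C(x,G_\phi y)=(G_\phi y)(x)=\phi(x,y)$; since both components are isomorphisms, $(1_{\bbA},G_\phi^{-1})$ is the inverse infomorphism, so $(1_{\bbA},G_\phi)$ is an isomorphism in $\CQ$-{\bf Sp} (a full subcategory of $\CQ$-{\bf Info}).

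Next I would check that $\{(1_{\bbA},G_\phi)\}$ is natural, viewed as a transformation $\CF\circ\CU\Rightarrow{\rm id}_{\CQ\text{-}{\bf Sp}}$. For an infomorphism $(F,G):(\bbA,\bbB,\phi)\lra(\bbA',\bbB',\psi)$, recall $\CF\circ\CU(F,G)=(F,F^{\triangleleft})$ with $F^{\triangleleft}(\lam)=F^{\la}(\lam)$. Writing out the naturality square and using the composition rule for infomorphisms, one is reduced to the single identity $F^{\triangleleft}\circ G_\psi=G_\phi\circ G:\bbB'\lra C(\PA)$. Evaluating at $y'\in\bbB'_0$: the left-hand side is $F^{\la}(\psi(-,y'))=\psi(-,y')\circ F_{\natural}=\psi(F-,y')$, the right-hand side is $\phi(-,Gy')$, and the two agree by the infomorphism condition $\phi(-,G-)=\psi(F-,-)$. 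This yields the natural isomorphism $\CF\circ\CU\cong{\rm id}_{\CQ\text{-}{\bf Sp}}$, and hence the equivalence.

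The only genuinely substantive point is the surjectivity of $G_\phi$ in step~(iv): this is exactly where axiom~(1) of a $\CQ$-state property system, together with the completeness of $\bbB$, is used, and it is the reason $\CQ$-{\bf Sp} --- rather than all of $\CQ$-{\bf Info} --- is the category on which $\CU$ becomes an equivalence. Everything else is bookkeeping with the definitions of $\zeta_C$, $\uphi$, $\dphi$, $F^{\triangleleft}$ and the composition of infomorphisms; the main thing to be careful about is keeping the domains and codomains of the $\CQ$-functors composing each infomorphism straight.
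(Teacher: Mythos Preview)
Your proposal is correct and follows essentially the same approach as the paper: reduce to $\CF\circ\CU\cong{\rm id}_{\CQ\text{-}{\bf Sp}}$ via Theorem~\ref{F_U_adjunction}, take the component at $(\bbA,\bbB,\phi)$ to be $(1_{\bbA},\laphi)$, use Equation~(\ref{phiF_fixed}) for the codomain, axiom~(2) for full faithfulness, axiom~(1) with $y={\inf}_{\bbB}\uphi(\mu)$ for surjectivity, and verify naturality via the infomorphism identity $\phi(-,G-)=\psi(F-,-)$. The only cosmetic difference is that you spell out injectivity separately (the paper leaves it implicit in ``fully faithful $+$ skeletal $\Rightarrow$ bijective once surjective''), and you note explicitly that $(1_{\bbA},G_\phi^{-1})$ is the inverse infomorphism.
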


\begin{proof}
It is shown in Theorem \ref{F_U_adjunction}  that $\CU\circ\CF={\bf id}_{\CQ\text{-}{\bf Cls}}$, so, it suffices to prove that $\CF\circ\CU\cong {\bf id}_{\CQ\text{-}{\bf Sp}}$.

Given a $\CQ$-state property system $(\bbA,\bbB,\phi)$, we have by definition
$$\CF\circ\CU(\bbA,\bbB,\phi)=(\bbA,\dphi\circ\uphi(\PA), \zeta_{\dphi\circ\uphi}).$$
By virtue of Equation (\ref{phiF_fixed}), the images of the $\CQ$-functor $\laphi:\bbB\lra\PA$ are contained in $\dphi\circ\uphi(\PA)$, so, it can be viewed as a $\CQ$-functor $\laphi:\bbB\lra\dphi\circ\uphi(\PA)$. Since for any $x\in\bbA_0$ and $y\in\bbB_0$,
$$\phi(x,y)=(\laphi y)(x)=\zeta_{\dphi\circ\uphi}(x,\laphi y),$$
it follows that
$\eta_{\phi}=(1_{\bbA},\laphi)$ is an infomorphism from $\zeta_{\dphi\circ\uphi}: \bbA\olra\dphi\circ\uphi(\PA)$ to $\phi:\bbA\olra\bbB$.
Hence $\eta_{\phi}$ is a morphism from  $\CF\circ\CU(\bbA,\bbB,\phi)$ to  $(\bbA,\bbB,\phi)$ in $\CQ$-{\bf Sp}.
We claim that $\eta$ is a natural isomorphism from $\CF\circ\CU$ to the identity functor ${\bf id}_{\CQ\text{-}{\bf Sp}}$.

Firstly,  $\eta_{\phi}$ is an isomorphism. It suffices to show that
$$\laphi:\bbB\lra\dphi\circ\uphi(\PA)$$
is an isomorphism between $\CQ$-categories.

Since
$$\bbB(y,y')=\phi(-,y')\lda\phi(-,y)=\PA(\laphi y,\laphi y')$$
for all $y,y'\in\bbB_0$, it follows that $\laphi$ is fully faithful. For each $\mu\in\PA$, let $y=\inf_{\bbB}\uphi(\mu)$, then
$$\laphi y=\phi(-,y)=\phi(-,{\inf}_{\bbB}\uphi(\mu))=\uphi(\mu)\rda\phi=\dphi\circ\uphi(\mu),$$
hence $\laphi$ is surjective. Since $\bbB$ is skeletal, we deduce that $\laphi:\bbB\lra\dphi\circ\uphi(\PA)$ is an isomorphism.

Secondly, $\eta$ is natural. For this, we check the commutativity of the following diagram for any infomorphism $(F,G):(\bbA,\bbB,\phi)\lra(\bbA',\bbB',\psi)$ between $\CQ$-state property systems:
$$\bfig
\Square[\CF\circ\CU(\bbA,\bbB,\phi)`(\bbA,\bbB,\phi)`\CF\circ\CU(\bbA',\bbB',\psi)`(\bbA',\bbB',\psi);
(1_{\bbA},\laphi)`(F,F^{\triangleleft})`(F,G)`(1_{\bbA'},\lapsi)] \efig$$
In fact, the equality $F\circ 1_{\bbA}=1_{\bbA'}\circ F$ is clear; and for all $x\in\bbA_0$ and $y'\in\bbB'_0$,
$$\laphi\circ G(y')(x)=\phi(x,Gy')=\psi(Fx,y')=\lapsi(y')(Fx)=F^{\triangleleft}\circ\lapsi(y')(x),$$
thus the conclusion follows.
\end{proof}

Together with Theorem \ref{T_D_adjunction} we have

\begin{cor}
The composition
$$\CT\circ\CU:\CQ\text{-}{\bf Sp}\lra (\CQ\text{-}{\bf CCat})_{\skel}$$
is a left adjoint of
$$\CF\circ\CD:(\CQ\text{-}{\bf CCat})_{\skel}\lra \CQ\text{-}{\bf Sp}.$$
\end{cor}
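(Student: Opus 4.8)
The plan is to obtain the statement as a purely formal consequence of the two adjunctions already in hand, by composing them. Recall from Theorem \ref{T_D_adjunction} that $\CT\dv\CD$, and from Theorem \ref{F_U_equivalence} that $\CF:\CQ\text{-}{\bf Cls}\to\CQ\text{-}{\bf Sp}$ and $\CU:\CQ\text{-}{\bf Sp}\to\CQ\text{-}{\bf Cls}$ are mutually quasi-inverse equivalences, with $\CU\circ\CF={\bf id}_{\CQ\text{-}{\bf Cls}}$ and a natural isomorphism $\CF\circ\CU\cong{\bf id}_{\CQ\text{-}{\bf Sp}}$; in particular $\CU\dv\CF$ (an equivalence is in particular a two-sided adjunction). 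Since a composite of left adjoints is left adjoint to the composite of the corresponding right adjoints, $\CT\circ\CU$ is left adjoint to $\CF\circ\CD$, which is exactly the assertion.

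To spell this out without having to pick units and counits, I would chain hom-set bijections. Fix a $\CQ$-state property system $S=(\bbA,\bbB,\phi)$ and a skeletal complete $\CQ$-category $\bbC$. Then
$$(\CQ\text{-}{\bf CCat})_{\skel}\big(\CT\circ\CU(S),\bbC\big)\ \cong\ \CQ\text{-}{\bf Cls}\big(\CU(S),\CD(\bbC)\big)\ \cong\ \CQ\text{-}{\bf Sp}\big(\CF\circ\CU(S),\CF\circ\CD(\bbC)\big)\ \cong\ \CQ\text{-}{\bf Sp}\big(S,\CF\circ\CD(\bbC)\big),$$
where the first isomorphism is the adjunction $\CT\dv\CD$ of Theorem \ref{T_D_adjunction}, the second holds because $\CF$ is fully faithful (being half of the equivalence of Theorem \ref{F_U_equivalence}), and the third is induced by the natural isomorphism $\CF\circ\CU\cong{\bf id}_{\CQ\text{-}{\bf Sp}}$. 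Each of the three bijections is natural in $S$ and in $\bbC$, so the composite bijection is as well, and this is precisely the adjunction $\CT\circ\CU\dv\CF\circ\CD$.

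There is no genuine obstacle here; the proof is bookkeeping, and the corollary is essentially immediate once Theorems \ref{T_D_adjunction} and \ref{F_U_equivalence} are available. The only point deserving a word of care is that Theorem \ref{F_U_adjunction} only supplies $\CF\dv\CU$ between $\CQ\text{-}{\bf Cls}$ and $\CQ\text{-}{\bf Info}$, whereas the composition argument needs $\CU\dv\CF$ (equivalently, that $\CF$ is fully faithful with $\CF\circ\CU\cong{\bf id}$); this is legitimate precisely because, after restricting the codomain of $\CF$ to the subcategory $\CQ\text{-}{\bf Sp}$, Theorem \ref{F_U_equivalence} upgrades the pair $(\CF,\CU)$ to an equivalence, which gives adjointness in both directions.
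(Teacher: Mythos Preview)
Your proof is correct and matches the paper's approach: the corollary is stated immediately after Theorem \ref{F_U_equivalence} with the preface ``Together with Theorem \ref{T_D_adjunction} we have,'' so the paper treats it as the formal composition of the adjunction $\CT\dv\CD$ with the equivalence $\CU\dv\CF$, exactly as you do. Your extra paragraph spelling out the hom-set bijections and the caveat about needing $\CU\dv\CF$ rather than $\CF\dv\CU$ is a helpful elaboration of what the paper leaves implicit.
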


We end this section with a characterization of  the complete $\CQ$-category $\CM(\phi)$ for a $\CQ$-distributor $\phi:\bbA\olra\bbB$.

Given a $\CQ$-distributor $\phi:\bbA\olra\bbB$, let  $\CM_{\phi}(\bbA,\bbB)$ denote the set of pairs $(\mu,\lam)\in\PA\times\PdB$  such that $\lam=\uphi(\mu)$ and $\mu=\dphi(\lam)$. $\CM_{\phi}(\bbA,\bbB)$ becomes a $\CQ$-typed set if we  assign $t(\mu,\lam)=t\mu=t\lam$. For $(\mu_1,\lam_1),(\mu_2,\lam_2)\in\CM_{\phi}(\bbA,\bbB)$, let
\begin{equation} \label{B_A_B_phi_order}
\CM_{\phi}(\bbA,\bbB)((\mu_1,\lam_1),(\mu_2,\lam_2))=\PA(\mu_1,\mu_2)=\PdB(\lam_1,\lam_2),
\end{equation}
Then $\CM_{\phi}(\bbA,\bbB)$ becomes a $\CQ$-category.

The projection
$$\pi_1:\CM_{\phi}(\bbA,\bbB)\lra\PA,\quad (\mu,\lam)\mapsto\mu$$
is clearly a fully faithful $\CQ$-functor. Since the image of $\pi_1$ is exactly the set of fixed points of the $\CQ$-closure operator $\dphi\circ\uphi:\PA\lra\PA$, we obtain that $\CM_{\phi}(\bbA,\bbB)$ is isomorphic to the complete $\CQ$-category $\CM(\phi)=\dphi\circ\uphi(\PA)$.

Similarly, the projection
$$\pi_2:\CM_{\phi}(\bbA,\bbB)\lra\PdB,\quad (\mu,\lam)\mapsto\lam$$
is also a fully faithful $\CQ$-functor and the image of $\pi_2$ is exactly the set of fixed points of the $\CQ$-interior operator $\uphi\circ\dphi:\PdB\lra\PdB$. Hence $\CM_{\phi}(\bbA,\bbB)$ is also isomorphic to the complete $\CQ$-category $\uphi\circ\dphi(\PdB)$, which is a $\CQ$-interior system of the skeletal complete $\CQ$-category $\PdB$.

Equation (\ref{B_A_B_phi_order}) shows that
$$\uphi:\dphi\circ\uphi(\PA)\lra\uphi\circ\dphi(\PdB)$$
and
$$\dphi:\uphi\circ\dphi(\PdB)\lra\dphi\circ\uphi(\PA)$$
are inverse to each other. Therefore, $\CM(\phi)(=\dphi\circ\uphi(\PA))$,  $\uphi\circ\dphi(\PdB)$  and $\CM_{\phi}(\bbA,\bbB)$ are isomorphic  to each other.

\begin{defn} \label{sup_dense}
A $\CQ$-functor $F:\bbA\lra\bbB$ is $\sup$-dense (resp. $\inf$-dense)  if for any $y\in\bbB_0$, there is some $\mu\in\PA$ (resp. $\lam\in\PdA$) such that $y=\sup_{\bbB}F^{\ra}(\mu)$ (resp. $y=\inf_{\bbB}F^{\Ra}(\lam)$).
\end{defn}

\begin{exmp}
For each $\CQ$-category $\bbA$, the Yoneda embedding $\sY:\bbA\lra\PA$ is $\sup$-dense in $\PA$. Indeed, we have that $\mu=\sup_{\PA}\circ\sY^\ra(\mu)$ for all $\mu\in\PA$ (see Equation (\ref{mu_sup_ymu}) in the proof of Theorem \ref{T_D_adjunction}). Dually, the co-Yoneda embedding $\sYd:\bbA\lra\PdA$ is $\inf$-dense.
\end{exmp}

The following characterization of $\CM_{\phi}(\bbA,\bbB)$ (hence $\CM(\phi)$)  extends Theorem 4.8 in \cite{Lai2009695} to the general setting.

\begin{thm} \label{complte_category_sup_dense}
Given a $\CQ$-distributor $\phi:\bbA\olra\bbB$, a skeletal complete $\CQ$-category $\bbX$ is isomorphic to $\CM_{\phi}(\bbA,\bbB)$  if and only if there exist a $\sup$-dense $\CQ$-functor $F:\bbA\lra\bbX$ and an $\inf$-dense $\CQ$-functor $G:\bbB\lra\bbX$ such that $\phi=G^{\natural}\circ F_{\natural}=\bbX(F-,G-)$.
\end{thm}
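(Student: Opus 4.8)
The statement is an ``if and only if'', and the two directions are proved differently. \emph{Necessity.} Suppose $\bbX\cong\CM_{\phi}(\bbA,\bbB)$. As $\CM_{\phi}(\bbA,\bbB)$ has been identified (via $\pi_1$) with the $\CQ$-closure system $\CM(\phi)=\dphi\circ\uphi(\PA)$ of $\PA$, and (via $\pi_2$) with the $\CQ$-interior system $\uphi\circ\dphi(\PdB)$ of $\PdB$, it suffices to exhibit the two functors landing in $\CM(\phi)$. I would put $F=\dphi\circ\uphi\circ\sY_{\bbA}:\bbA\lra\CM(\phi)$, that is $Fx=(\dphi\circ\uphi(\sY_{\bbA}x),\uphi(\sY_{\bbA}x))$, and $G=\laphi:\bbB\lra\CM(\phi)$, $Gy=\phi(-,y)$, which does take values in $\CM(\phi)$ by Equation~(\ref{phiF_fixed}) and which, read through $\pi_2$, is $\uphi\circ\dphi\circ\sYd_{\bbB}$. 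Both are $\CQ$-functors, being composites of the $\CQ$-functors $\sY_{\bbA}$, $\sYd_{\bbB}$ with the reflection $\dphi\circ\uphi:\PA\lra\CM(\phi)$ and the coreflection $\uphi\circ\dphi:\PdB\lra\uphi\circ\dphi(\PdB)$. Since $\sY_{\bbA}$ is $\sup$-dense (Equation~(\ref{mu_sup_ymu})) and $\dphi\circ\uphi:\PA\lra\CM(\phi)$ is a left adjoint, hence preserves suprema (Corollary~\ref{left_adjoint_preserves_sup}), every $\xi\in\CM(\phi)$ satisfies $\xi=\dphi\circ\uphi\big({\sup}_{\PA}\sY_{\bbA}^{\ra}(\xi)\big)={\sup}_{\CM(\phi)}F^{\ra}(\xi)$, so $F$ is $\sup$-dense; dually, since $\uphi\circ\dphi$ is a right adjoint and $\sYd_{\bbB}$ is $\inf$-dense, $G$ is $\inf$-dense. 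Finally, for $x\in\bbA_0$ and $y\in\bbB_0$,
\[
\CM(\phi)(Fx,Gy)=\PA\big(\dphi\circ\uphi(\sY_{\bbA}x),\,\phi(-,y)\big)=\PA\big(\sY_{\bbA}x,\,\phi(-,y)\big)=\phi(x,y),
\]
where the middle equality is the adjunction $\dphi\circ\uphi\dv I$ applied to the closed presheaf $\phi(-,y)$ and the last is the Yoneda lemma; thus $\phi=\CM(\phi)(F-,G-)=G^{\natural}\circ F_{\natural}$.

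\emph{Sufficiency.} Suppose now that $F:\bbA\lra\bbX$ is $\sup$-dense, $G:\bbB\lra\bbX$ is $\inf$-dense, and $\phi=\bbX(F-,G-)=G^{\natural}\circ F_{\natural}$. I would consider the $\CQ$-functor $\Phi=F^{\la}\circ\sY_{\bbX}:\bbX\lra\PA$, i.e. $\Phi(z)=\bbX(F-,z)$, and aim to show it is a bijective, fully faithful $\CQ$-functor onto the $\CQ$-subcategory $\dphi\circ\uphi(\PA)=\CM(\phi)$ of $\PA$, which makes it an isomorphism. The crux is the identity
\[
\dphi\circ\uphi(\mu)=\bbX\big(F-,\,{\sup}_{\bbX}F^{\ra}(\mu)\big)\qquad\text{for all }\mu\in\PA.
\]
To establish it, set $z={\sup}_{\bbX}F^{\ra}(\mu)=\colim_{\mu}F$ (available because $\bbX$ is complete), so that $\bbX(z,-)=F_{\natural}\lda\mu$; evaluating at $Gy$ and using $\phi=\bbX(F-,G-)$ gives $\uphi(\mu)(y)=(\phi\lda\mu)(y)=\bbX(z,Gy)$. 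Then $\inf$-density of $G$, in the form $\bbX(v,z)=\bw_{y}\bbX(z,Gy)\rda\bbX(v,Gy)$ for every $v\in\bbX_0$ (equivalently $z={\inf}_{\bbX}G^{\Ra}(\bbX(z,G-))$, which holds for all $z$ exactly when $G$ is $\inf$-dense), applied at $v=Fx$, yields $\dphi\circ\uphi(\mu)(x)=(\uphi(\mu)\rda\phi)(x)=\bbX(Fx,z)$.

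Granting the identity, the image of $\Phi$ is precisely $\dphi\circ\uphi(\PA)$: it lies inside because $\sup$-density of $F$ expresses any $z$ as ${\sup}_{\bbX}F^{\ra}(\nu)$, whence $\Phi(z)=\dphi\circ\uphi(\nu)$; and it contains it because $\dphi\circ\uphi(\mu)=\Phi({\sup}_{\bbX}F^{\ra}(\mu))$. Full faithfulness, $\PA(\Phi z,\Phi z')=\bbX(z,z')$, uses $\sup$-density once more, this time in the equivalent form $\bbX(z,-)=F_{\natural}\lda\bbX(F-,z)$ (i.e. $z={\sup}_{\bbX}F^{\ra}(\bbX(F-,z))$, which follows from the adjunction ${\sup}_{\bbX}\circ F^{\ra}\dv F^{\la}\circ\sY_{\bbX}$ and the full faithfulness of $\sY_{\bbX}$). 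Since $\bbX$ is skeletal, full faithfulness forces $\Phi$ to be injective on objects, and $\Phi$ is plainly type-preserving; hence $\Phi$ is an isomorphism $\bbX\cong\CM(\phi)\cong\CM_{\phi}(\bbA,\bbB)$.

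The step I expect to be the main obstacle is the displayed identity, together with the bookkeeping around the equivalences ``$F$ is $\sup$-dense $\iff z\mapsto\bbX(F-,z)$ is fully faithful $\iff z={\sup}_{\bbX}F^{\ra}(\bbX(F-,z))$ for all $z$'' and its $\inf$-dense dual for $G$; once those are set up and inserted at the right spots, everything else is formal.
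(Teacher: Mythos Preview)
Your proposal is correct and follows essentially the same approach as the paper. The only packaging differences are that you work in $\CM(\phi)\subseteq\PA$ via $\pi_1$ rather than with pairs in $\CM_{\phi}(\bbA,\bbB)$, and in the necessity direction you obtain $\sup$-density of $F$ abstractly (a left adjoint composed with a $\sup$-dense functor) whereas the paper verifies $(\mu,\lam)=\sup_{\bbX}F^{\ra}(\mu)$ by a direct computation; your ``displayed identity'' in the sufficiency direction, together with the equivalent forms of $\sup$/$\inf$-density, is exactly the content of the paper's Steps~1--4.
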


\begin{proof}
{\bf Necessity.} It suffices to prove the case $\bbX=\CM_{\phi}(\bbA,\bbB)$. Define $\CQ$-functors $F:\bbA\lra\bbX$ and $G:\bbB\lra\bbX$ by
$$Fa=(\dphi\circ\raphi a,\raphi a),\quad Gb=(\laphi b,\uphi\circ\laphi b),$$
then $F,G$ are well defined by equations (\ref{phiF_fixed}) and (\ref{Fphi_fixed}). It follows that
\begin{align*}
\bbX(F-,G-)&=\PA(\dphi\circ\raphi-,\laphi-)\\
&=\PA(\dphi\circ\raphi-,\dphi\circ\sYd_{\bbB}-)&(\text{by Equation (\ref{phiF_fixed})})\\
&=\PdB(\uphi\circ\dphi\circ\raphi-,\sYd_{\bbB}-)&(\text{by Proposition \ref{uphi-dphi-adjunction}})\\
&=\PdB(\raphi-,\sYd_{\bbB}-)&(\text{by Equation (\ref{Fphi_fixed})})\\
&=(\raphi-)(-)&(\text{by Yoneda lemma})\\
&=\phi.
\end{align*}
Now we show that $F:\bbA\lra\bbX$ is $\sup$-dense. For all $(\mu,\lam),(\mu',\lam')\in\bbX_0$,
\begin{align*}
\bbX((\mu,\lam),(\mu',\lam'))&=\lam'\rda\lam\\
&=\lam'\rda\uphi(\mu)\\
&=\lam'\rda(\phi\lda\mu)\\
&=(\lam'\rda\phi)\lda\mu\\
&=\PdB(\raphi-,\lam')\lda\mu\\
&=\bbX(F-,(\mu',\lam'))\lda\mu&(\text{by Equation (\ref{B_A_B_phi_order})})\\
&=(\bbX(-,(\mu',\lam'))\circ F_{\natural})\lda\mu\\
&=\bbX(-,(\mu',\lam'))\lda(\mu\circ F^{\natural})&(\text{by Proposition \ref{graph_cograph_implication}(2)})\\
&=\bbX(-,(\mu',\lam'))\lda F^{\ra}(\mu),
\end{align*}
thus $(\mu,\lam)={\sup}_{\bbX}\circ F^{\ra}(\mu)$, as desired.

That $G:\bbB\lra\bbX$ is $\inf$-dense can be proved similarly.

{\bf Sufficiency.} We show that the type-preserving function
$$H:\bbX\lra\CM_{\phi}(\bbA,\bbB),\quad Hx=(F_{\natural}(-,x),G^{\natural}(x,-))$$
is an isomorphism of $\CQ$-categories.

{\bf Step 1.} $\bbX=F_{\natural}\lda F_{\natural}=G^{\natural}\rda G^{\natural}$.

For all $x\in\bbX_0$, since $F:\bbA\lra\bbX$ is $\sup$-dense, there is some $\mu\in\PA$ such that $x=\sup_{\bbX}F^{\ra}(\mu)$, thus
\begin{equation} \label{x_sup_Fmu}
\bbX(x,-)=\bbX\lda F^{\ra}(\mu)=\bbX\lda(\mu\circ F^{\natural})=(\bbX\circ F_{\natural})\lda\mu=F_{\natural}\lda\mu,
\end{equation}
where the third equality follows from Proposition \ref{graph_cograph_implication}(2). Consequently
\begin{align*}
\bbX(x,-)&\leq F_{\natural}\lda F_{\natural}(-,x)\\
&\leq(F_{\natural}\lda F_{\natural}(-,x))\circ\bbX(x,x)\\
&=(F_{\natural}\lda F_{\natural}(-,x))\circ(F_{\natural}(-,x)\lda\mu)&(\text{by Equation (\ref{x_sup_Fmu})})\\
&\leq F_{\natural}\lda\mu\\
&=\bbX(x,-),&(\text{by Equation (\ref{x_sup_Fmu})})
\end{align*}
hence $\bbX(x,-)=F_{\natural}\lda F_{\natural}(-,x)=(F_{\natural}\lda F_{\natural})(x,-)$.

Since $G:\bbB\lra\bbX$ is $\inf$-dense, similar calculations lead to $\bbX=G^{\natural}\rda G^{\natural}$.

{\bf Step 2.} $Hx\in\CM_{\phi}(\bbA,\bbB)$ for all $x\in\bbX_0$, thus $H$ is well defined. Indeed,
\begin{align*}
\uphi(F_{\natural}(-,x))&=\phi\lda F_{\natural}(-,x)\\
&=(G^{\natural}\circ F_{\natural})\lda F_{\natural}(-,x)&(\text{since}\ \phi=G^{\natural}\circ F_{\natural})\\
&=G^{\natural}\circ(F_{\natural}\lda F_{\natural}(-,x))&(\text{by Proposition \ref{graph_cograph_implication}(3)})\\
&=G^{\natural}\circ\bbX(x,-)&(\text{by Step 1})\\
&=G^{\natural}(x,-).
\end{align*}
Similar calculation shows that $\dphi(G^{\natural}(x,-))=F_{\natural}(-,x)$. Hence, $Hx\in\CM_{\phi}(\bbA,\bbB)$.

{\bf Step 3.} $H$ is a fully faithful $\CQ$-functor. Indeed, for all $x,x'\in\bbX_0$, by Step 1,
$$\bbX(x,x')=F_{\natural}(-,x')\lda F_{\natural}(-,x)=\PA(F_{\natural}(-,x),F_{\natural}(-,x'))=\CM_{\phi}(\bbA,\bbB)(Hx,Hx').$$

{\bf Step 4.} $H$ is surjective. For each pair $(\mu,\lam)\in\CM_{\phi}(\bbA,\bbB)$, we must show that there is some $x\in\bbX_0$ such that $F_{\natural}(-,x)=\mu$ and $G^{\natural}(x,-)=\lam$. Indeed, let $x=\sup_{\bbX}F^{\ra}(\mu)$, then
\begin{align*}
G^{\natural}(x,-)&=G^{\natural}\circ\bbX(x,-)\\
&=G^{\natural}\circ(F_{\natural}\lda\mu)&(\text{by Equation (\ref{x_sup_Fmu})})\\
&=(G^{\natural}\circ F_{\natural})\lda\mu&(\text{by Proposition \ref{graph_cograph_implication}(3)})\\
&=\phi\lda\mu&(\text{since}\ \phi=G^{\natural}\circ F_{\natural})\\
&=\uphi(\mu)\\
&=\lam,
\end{align*}
and it follows that $F_{\natural}(-,x)=\dphi(G^{\natural}(x,-))=\dphi(\lam)=\mu$.
\end{proof}

\begin{rem} \label{M_example}
 (1) If the quantaloid $\CQ$ has only one object, i.e., $\CQ$ is a unital quantale (in particular the $2$-element Boolean algebra), then a $\CQ$-distributor $\phi:\bbA\olra\bbB$ between discrete $\CQ$-categories is exactly a $\CQ$-valued relations between two sets.\footnote{A $\CQ$-category $\bbA$ is discrete if $\bbA(x,x)= 1_{tx}$ for all $x\in\bbA_0$ and $\bbA(x,y)=\bot_{tx,ty}$ whenever $x\neq y$.} In this case an element $(\mu,\lam)$ in $\CM_{\phi}(\bbA,\bbB)$ is a formal concept of the formal context $(\bbA,\bbB,\phi)$ in the sense of \cite{Radim2004,Ganter:1997:FCA:550737,Shen2013166} and $\CM_{\phi}(\bbA,\bbB)$ is the (fuzzy) formal concept lattice of $(\bbA,\bbB,\phi)$. So, the construction of $\CM({\phi})$ provides an extension of Formal Concept Analysis \cite{Radim2004,Ganter:1997:FCA:550737}.

(2)  If the quantaloid $\CQ$ degenerates to a unital commutative quantale, then  $\CQ$-categories have been treated as quantitative (fuzzy) ordered sets, e.g. \cite{Radim2004,Wagner94solvingrecursive}. In this case, for each $\CQ$-category $\bbA$, $\CM(\bbA)$ is the enriched MacNeille completion of $\bbA$ given in \cite{Radim2004,Wagner94solvingrecursive}. Thus, the construction of $\CM(\phi)$  also generalizes the MacNeille completion of (quantitative) ordered sets.
\end{rem}

\section{Kan adjunctions} \label{Kan_adjunction}

Given a $\CQ$-distributor $\phi:\bbA\olra\bbB$, composing with $\phi$ yields a $\CQ$-functor
$$\phi^*:\PB\lra\PA$$
defined by
$$\phi^*(\lam)=\lam\circ\phi.$$
for all $\lam\in\PB$. Define another $\CQ$-functor
$$\phi_*:\PA\lra\PB$$
by
$$\phi_*(\mu)=\mu\lda\phi.$$

The following propositions \ref{phistar_adjoint} and \ref{phi_star_Yoneda} can be verified in a way similar to that for propositions \ref{uphi-dphi-adjunction} and \ref{uphi_dphi_Yoneda}.

\begin{prop} \label{phistar_adjoint}
$\phi^*\dv\phi_*:\PB\rhu\PA$ in $\CQ$-{\bf Cat}.
\end{prop}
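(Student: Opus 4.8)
The plan is to prove the adjunction by verifying the hom-object equality that characterizes adjoint $\CQ$-functors, exactly as in the proof of Proposition \ref{uphi-dphi-adjunction}: it is enough to check that $\PA(\phi^*(\lam),\mu)=\PB(\lam,\phi_*(\mu))$ for all $\lam\in\PB$ and $\mu\in\PA$, since this equality (which is the graph identity $\bbB(F-,-)=\bbA(-,G-)$ for $F=\phi^*$, $G=\phi_*$) forces $\phi^*\dv\phi_*:\PB\rhu\PA$ by the criterion recalled in Section \ref{quantaloid}.

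First I would unfold both sides using the definitions of the hom-arrows of $\PA$ and $\PB$ and of $\phi^*,\phi_*$: the left-hand side becomes $\mu\lda\phi^*(\lam)=\mu\lda(\lam\circ\phi)$, and the right-hand side becomes $\phi_*(\mu)\lda\lam=(\mu\lda\phi)\lda\lam$. Everything then reduces to the purely quantaloidal identity $\mu\lda(\lam\circ\phi)=(\mu\lda\phi)\lda\lam$, a form of the mixed associativity of composition and left implication. I would derive it from the adjointness $(-\circ f)\dv(-\lda f)$ in $\CQ$-{\bf Dist} applied twice: for an arbitrary $\CQ$-distributor $\nu$ of matching type, $\nu\leq\mu\lda(\lam\circ\phi)\iff\nu\circ\lam\circ\phi\leq\mu\iff\nu\circ\lam\leq\mu\lda\phi\iff\nu\leq(\mu\lda\phi)\lda\lam$, where the first equivalence uses associativity of composition; since $\nu$ is arbitrary, the two $\CQ$-distributors are equal. (The dual identity $\lam\rda(\phi\lda\mu)=(\lam\rda\phi)\lda\mu$ used for Proposition \ref{uphi-dphi-adjunction} is the same kind of fact with $\rda$ in place of one of the $\lda$'s.)

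The remaining bookkeeping is routine and I would mention it only in passing: if $t\lam=X$ and $t\mu=Y$, then $\phi^*(\lam):\bbA\olra *_X$ lies in $\PA$ and $\phi_*(\mu):\bbB\olra *_Y$ lies in $\PB$, so all compositions and implications above are legitimate and the resulting equality lives in $\CQ(X,Y)$. I do not anticipate a genuine obstacle; the only points requiring care are invoking the implication identity in the correct (left) variable and remembering that $\phi^*$ is the left adjoint and $\phi_*$ the right one, so that the argument runs in the same spirit as — and is in a suitable sense dual to — the one for $\uphi\dv\dphi$.
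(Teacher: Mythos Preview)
Your proposal is correct and follows essentially the same approach the paper intends: the paper explicitly says this proposition ``can be verified in a way similar to'' Proposition~\ref{uphi-dphi-adjunction}, and your hom-object computation $\PA(\phi^*(\lam),\mu)=\mu\lda(\lam\circ\phi)=(\mu\lda\phi)\lda\lam=\PB(\lam,\phi_*(\mu))$ is exactly that analogue. Your derivation of the needed implication identity from the adjunction $(-\circ f)\dashv(-\lda f)$ is sound and is the standard justification for the step the paper would leave implicit.
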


\begin{prop} \label{phi_star_Yoneda}
Let $\phi:\bbA\olra\bbB$ be a $\CQ$-distributor, then $\ulc\phi^*\circ\sY_{\bbB}\urc=\phi$.
\end{prop}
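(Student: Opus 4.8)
The plan is to follow the template of Proposition~\ref{uphi_dphi_Yoneda}: unwind every definition and reduce the asserted equality of $\CQ$-distributors to a pointwise identity that is forced by the two defining inequalities of the $\CQ$-distributor $\phi$.

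First I would fix $x\in\bbA_0$ and $y\in\bbB_0$ and compute, using the correspondence $G\mapsto\ulc G\urc$ between $\CQ$-functors $\bbB\lra\PA$ and $\CQ$-distributors $\bbA\olra\bbB$ (so that $\ulc G\urc(x,y)=G(y)(x)$), that $\ulc\phi^*\circ\sY_{\bbB}\urc(x,y)=(\phi^*\circ\sY_{\bbB})(y)(x)=(\phi^*(\bbB(-,y)))(x)=(\bbB(-,y)\circ\phi)(x)=\bv_{y'\in\bbB_0}\bbB(y',y)\circ\phi(x,y')$, where the successive steps use the definitions of $\ulc-\urc$, of $\sY_{\bbB}$, of $\phi^*$, and of composition in $\CQ$-{\bf Dist}.

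It then remains to see that this join equals $\phi(x,y)$. The inequality $\leq$ is immediate from condition~(1) in the definition of a $\CQ$-distributor, which gives $\bbB(y',y)\circ\phi(x,y')\leq\phi(x,y)$ for every $y'\in\bbB_0$. For $\geq$ I would single out the summand $y'=y$ and use $1_{ty}\leq\bbB(y,y)$ together with the fact that composition in $\CQ$ preserves joins, hence order, in each variable: $\phi(x,y)=1_{ty}\circ\phi(x,y)\leq\bbB(y,y)\circ\phi(x,y)\leq\bv_{y'\in\bbB_0}\bbB(y',y)\circ\phi(x,y')$. Combining the two inequalities gives $\ulc\phi^*\circ\sY_{\bbB}\urc(x,y)=\phi(x,y)$, and since $x,y$ were arbitrary this is exactly $\ulc\phi^*\circ\sY_{\bbB}\urc=\phi$. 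Alternatively one may shortcut the last part by invoking $\bbB(-,y)\circ\phi=\phi(-,y)$ and the already-used fact that $\phi(-,y)$ evaluated at $x$ returns $\phi(x,y)$.

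I do not expect any real obstacle here: the whole argument is a chain of definitional rewrites plus the two axioms of a $\CQ$-distributor, entirely parallel to Propositions~\ref{uphi-dphi-adjunction} and~\ref{uphi_dphi_Yoneda}. The only points to watch are the bookkeeping of types (every object in the chain should be a contravariant presheaf on $\bbA$ of type $ty$, and the final value should sit in $\CQ(tx,ty)$) and applying the direction of composition in $\CQ$-{\bf Dist} correctly when passing from $\phi^*(\bbB(-,y))$ to $\bbB(-,y)\circ\phi$.
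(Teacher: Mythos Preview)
Your proposal is correct and follows exactly the approach the paper indicates: the paper does not spell out a proof but says it ``can be verified in a way similar to'' Proposition~\ref{uphi_dphi_Yoneda}, and your chain of definitional rewrites together with the distributor axioms is precisely that template. The shortcut you mention at the end---using $\bbB(-,y)\circ\phi=\phi(-,y)$ because $\bbB$ is the identity distributor---is the slicker version and matches the spirit of the paper's Remark~\ref{distributor_notion}.
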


If $\phi:\bbA\rhu\bbB$ is itself a left adjoint $\CQ$-distributor, then $\phi^*$  is not only a left adjoint $\CQ$-functor, but also a right adjoint $\CQ$-functor as asserted in the following

\begin{prop}
$\phi\dv\psi:\bbA\rhu\bbB$ in $\CQ$-{\bf Dist} if and only if $\psi^*\dv\phi^*:\PA\rhu\PB$ in $\CQ$-{\bf Cat}.
\end{prop}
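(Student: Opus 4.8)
The plan is to reduce the equivalence to an elementary order‑reflection property of the assignment $\eta\mapsto\eta^{*}$ from $\CQ$-distributors to $\CQ$-functors between presheaf categories.

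First I would record two formal identities. The operation $(-)^{*}$ is "contravariantly functorial": for composable $\CQ$-distributors $\al,\be$ one has $(\be\circ\al)^{*}=\al^{*}\circ\be^{*}$, since $(\be\circ\al)^{*}(\lam)=\lam\circ(\be\circ\al)=(\lam\circ\be)\circ\al$; and $\bbA^{*}=1_{\PA}$ since $\mu\circ\bbA=\mu$. Applied to $\phi:\bbA\olra\bbB$ and $\psi:\bbB\olra\bbA$ this gives $\phi^{*}\circ\psi^{*}=(\psi\circ\phi)^{*}:\PA\lra\PA$ and $\psi^{*}\circ\phi^{*}=(\phi\circ\psi)^{*}:\PB\lra\PB$. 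Unwinding the definitions of the two adjunctions, and recalling that the underlying order of $\PA$ (resp.\ $\PB$) coincides with the local order of $\CQ$-{\bf Dist} (Remark \ref{PdA_QDist_order}), I obtain
$$\psi^{*}\dv\phi^{*}:\PA\rhu\PB\iff\bbA^{*}\leq(\psi\circ\phi)^{*}\ \text{and}\ (\phi\circ\psi)^{*}\leq\bbB^{*},$$
$$\phi\dv\psi:\bbA\rhu\bbB\iff\bbA\leq\psi\circ\phi\ \text{and}\ \phi\circ\psi\leq\bbB.$$

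So everything comes down to the following claim about $\CQ$-distributors $\al,\be:\bbX\olra\bbY$: $\al\leq\be$ if and only if $\al^{*}\leq\be^{*}$. The forward implication is immediate, since composition in $\CQ$-{\bf Dist} is monotone in each variable. For the backward implication I would test $\al^{*}\leq\be^{*}$ on the representable presheaves $\sY_{\bbY}y$ for $y\in\bbY_{0}$: from $\sY_{\bbY}y\circ\al\leq\sY_{\bbY}y\circ\be$ together with the elementary identity $(\sY_{\bbY}y\circ\al)(x)=\bv_{w\in\bbY_{0}}\bbY(w,y)\circ\al(x,w)=\al(x,y)$ (which follows from the two distributor axioms, just as in the Yoneda lemma), one reads off $\al(x,y)\leq\be(x,y)$ for all $x,y$, i.e.\ $\al\leq\be$. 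Feeding this back into the two displayed equivalences proves both directions of the theorem simultaneously.

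I expect no real obstacle here; the only point requiring care is variance. Although $\uphi$ and $\dphi$ reverse the local order of $\CQ$-{\bf Dist}, the Kan functor $\phi^{*}$ is order‑preserving, and the underlying order on $\PA$ (unlike that on $\PdA$) is the $\CQ$-{\bf Dist} order rather than its reverse, so the chain of equivalences above genuinely relates inequalities pointing the same way. As a sanity check, the forward implication also drops out in one line from Proposition \ref{graph_cograph_implication}(2) applied in the quantaloid $\CQ$-{\bf Dist}: if $\phi\dv\psi$ then $(\lam\circ\phi)\lda\mu=\lam\lda(\mu\circ\psi)$, that is $\PA(\mu,\phi^{*}\lam)=\PB(\psi^{*}\mu,\lam)$ for all $\mu\in\PA$ and $\lam\in\PB$, which is precisely $\psi^{*}\dv\phi^{*}$.
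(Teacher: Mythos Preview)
Your proof is correct. The key move---testing on representable presheaves $\sY_{\bbY}y$ and invoking the Yoneda identity $\sY_{\bbY}y\circ\al=\al(-,y)$---is exactly what the paper does for the sufficiency direction (there phrased via Proposition~\ref{phi_star_Yoneda}, which records that $\phi^{*}\circ\sY_{\bbB}$ recovers $\phi$). The difference is organizational: you extract a single order-reflection lemma ``$\al\leq\be\iff\al^{*}\leq\be^{*}$'' and run both directions of the equivalence through it symmetrically, after first noting the contravariant functoriality of $(-)^{*}$. The paper instead treats the two directions asymmetrically: for necessity it computes the hom-equality $\PB(\psi^{*}\mu,\lam)=\PA(\mu,\phi^{*}\lam)$ directly from Proposition~\ref{graph_cograph_implication}(2) (your ``sanity check''), and only for sufficiency does it evaluate on representables. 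Your packaging is arguably cleaner, since the order-reflection lemma is reusable and makes the equivalence manifestly a formal consequence of $(-)^{*}$ being a faithful order-embedding; the paper's version for necessity has the minor advantage of yielding the stronger hom-isomorphism statement rather than just the unit/counit inequalities.
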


\begin{proof}
{\bf Necessity.} By Proposition \ref{graph_cograph_implication}(2), for all $\mu\in\PA$ and $\lam\in\PB$,
$$\PB(\psi^*(\mu),\lam)=\lam\lda(\mu\circ\psi)=(\lam\circ\phi)\lda\mu=\PA(\mu,\phi^*(\lam)).$$

{\bf Sufficiency.} We must show that $\bbA\leq\psi\circ\phi$ and $\phi\circ\psi\leq\bbB$. Indeed, for all $x\in\bbA_0$ and $y\in\bbB_0$, by Proposition \ref{phi_star_Yoneda},
$$\psi(-,x)\circ\phi=\phi^*(\psi(-,x))=\phi^*\circ\psi^*\circ\sY_{\bbA}x\geq 1_{\PA}\circ\sY_{\bbA}x=\bbA(-,x),$$
$$\phi(-,y)\circ\psi=\psi^*(\phi(-,y))=\psi^*\circ\phi^*\circ\sY_{\bbB}y\leq 1_{\PB}\circ\sY_{\bbB}y=\bbB(-,y).$$
This completes the proof.
\end{proof}

Therefore, for a left adjoint $\CQ$-distributor $\phi$, $\phi^*$ has both a right adjoint $\phi_*$ and a left adjoint $\psi^*$, where $\psi$ is the right adjoint of $\phi$ in $\CQ$-{\bf Dist}. In particular, given a $\CQ$-functor $F:\bbA\lra\bbB$, since the cograph $F^{\natural}:\bbB\olra\bbA$ of $F$ is a right adjoint of the graph $F_\natural :\bbA\olra\bbB$ of $F$, it follows that both $(F^\natural )_*$ and $(F_\natural )^*$ are right adjoints of $(F^\natural )^*$, hence equal to each other. Since $F^\la:\PB\lra\PA$ is the counterpart of the functor $-\circ F$ for $\CQ$-categories, we arrive at the following conclusion which asserts that the adjunction $\phi^*\dv\phi_*$ generalizes  Kan extensions in category theory.

\begin{thm} \label{why_kan}
For each $\CQ$-functor $F:\bbA\lra\bbB$, it holds that
$$(F^{\natural})^*\dv (F^{\natural})_*= F^{\la}=(F_{\natural})^*\dv(F_{\natural})_*.$$
\end{thm}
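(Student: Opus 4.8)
The statement bundles together two adjunctions and two identifications of $\CQ$-functors, and my plan is to obtain the adjunctions directly from Proposition \ref{phistar_adjoint} and to prove the identifications by unwinding the definitions of $\phi^*$, $\phi_*$ and $F^\la$. For the adjunctions I would apply Proposition \ref{phistar_adjoint} to the $\CQ$-distributor $F^\natural:\bbB\olra\bbA$ to obtain $(F^\natural)^*\dv(F^\natural)_*$, and to the $\CQ$-distributor $F_\natural:\bbA\olra\bbB$ to obtain $(F_\natural)^*\dv(F_\natural)_*$; here the construction $\phi\mapsto(\phi^*,\phi_*)$ is read with the roles of domain and codomain interchanged relative to the statement of that proposition, since $F^\natural$ points from $\bbB$ to $\bbA$. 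I would also record at this point that $(F^\natural)^*,F^\ra:\PA\lra\PB$ while $(F^\natural)_*,(F_\natural)^*,F^\la:\PB\lra\PA$, so that the displayed chain of equalities is well typed.

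The equality $(F_\natural)^*=F^\la$ should be immediate, since for every $\lam\in\PB$ one has $(F_\natural)^*(\lam)=\lam\circ F_\natural=F^\la(\lam)$ directly from the two definitions. For the equality $(F^\natural)_*=F^\la$, I would fix $\lam\in\PB$ and note that $(F^\natural)_*(\lam)=\lam\lda F^\natural$ by definition; since $F_\natural\dv F^\natural:\bbA\rhu\bbB$ in $\CQ$-{\bf Dist}, Proposition \ref{graph_cograph_implication}(1) applied with $h=\lam$, $f=F_\natural$, $g=F^\natural$ gives $\lam\lda F^\natural=\lam\circ F_\natural$, which is exactly $F^\la(\lam)$. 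Together with the two instances of Proposition \ref{phistar_adjoint} this is already the whole theorem.

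I do not expect a genuine obstacle here: the only thing that needs care is the bookkeeping of variances and of which presheaf $\CQ$-category is the domain or codomain of each functor, a harmless nuisance stemming from the fact that $F^\natural$ is a distributor $\bbB\olra\bbA$ while $F_\natural$ is a distributor $\bbA\olra\bbB$. It is worth observing, moreover, that $(F^\natural)^*(\mu)=\mu\circ F^\natural=F^\ra(\mu)$, so the conclusion in particular exhibits $F^\la$ as simultaneously a right adjoint of $F^\ra=(F^\natural)^*$ and a left adjoint of $(F_\natural)_*$, which is precisely the $\CQ$-categorical incarnation of the left and right Kan extensions of $F$.
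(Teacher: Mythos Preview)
Your proposal is correct. The paper's own argument is a bit different in one spot: to obtain $(F^{\natural})_*=(F_{\natural})^*$, the text just before the theorem uses Proposition~5.3 ($\phi\dv\psi$ in $\CQ$-{\bf Dist} iff $\psi^*\dv\phi^*$ in $\CQ$-{\bf Cat}) applied to $F_\natural\dv F^\natural$, so that $(F^\natural)^*\dv(F_\natural)^*$, and then invokes uniqueness of right adjoints to identify $(F_\natural)^*$ with the right adjoint $(F^\natural)_*$ coming from Proposition~\ref{phistar_adjoint}. You instead bypass Proposition~5.3 and the uniqueness argument entirely, computing $(F^\natural)_*(\lam)=\lam\lda F^\natural=\lam\circ F_\natural=F^\la(\lam)$ directly from Proposition~\ref{graph_cograph_implication}(1). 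Your route is slightly more elementary and self-contained; the paper's route is a shade more conceptual, since it exhibits the equality as an instance of the general fact that the assignment $\phi\mapsto\phi^*$ takes adjoint distributors to adjoint $\CQ$-functors. Either way the remaining pieces---the two applications of Proposition~\ref{phistar_adjoint} and the identification $(F_\natural)^*=F^\la$ from the definitions---are handled the same way.
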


\begin{rem}
(1) The left Kan extension $(F^{\natural})^*:\PA\lra\PB$ of a $\CQ$-functor $F:\bbA\lra\bbB$ given in Theorem \ref{why_kan} is exactly the pointwise left Kan extension of $\sY_{\bbB}\circ F:\bbA\lra\PB$ along $\sY_{\bbA}:\bbA\lra\PA$ in Stubbe \cite{Stubbe_2005}. Indeed, it can be verified that if the pointwise left Kan extension $\langle F,G\rangle$ of a $\CQ$-functor $F:\bbA\lra\bbB$ along $G:\bbA\lra\bbC$ exists, then for each $c\in\bbC_0$,
$$\langle F,G\rangle(c)=\bbB\lda(F^{\natural})^*(G_{\natural}(-,c)).$$

(2)
Consider the Boolean algebra ${\bf 2}=\{0,1\}$ as an one-object quantaloid. Then every set can be regarded as a discrete ${\bf 2}$-category. Given sets $X$ and $Y$, a distributor $F:X\olra Y$ is essentially a relation from $X$ to $Y$, or a set-valued map $X\lra {\bf 2}^Y$. If we write $F^{\rm op}$ for the dual relation of $F$, then both $F_*$ and  $(F^{\rm op})^*$ are maps from ${\bf 2}^Y$ to ${\bf 2}^X$. Explicitly, for each $V\subseteq Y$,
$$F_*(V)=\{x\in X\mid F(x)\subseteq V\} \ {\rm and}\ (F^{\rm op})^*(V)=\{x\in X\mid F(x)\cap V\not=\emptyset\}. $$

If both $X$ and $Y$ are topological spaces, then the upper and lower semi-continuity of $F$  (as a set-valued map) \cite{Berge1963} can be phrased as follows:  $F$ is upper (resp. lower) semi-continuous  if $F_*(V)$ (resp. $(F^{\rm op})^*(V)$) is open in $X$ whenever $V$ is open in $Y$. In particular, if $F$ is the graph of some map $f:X\lra Y$, then $(F^{\rm op})^*(V)=F_*(V)=f^{-1}(V)$ for all $V\subseteq Y$, hence $f$ is continuous iff $F$ is lower semi-continuous iff $F$ is upper semi-continuous \cite{Berge1963}.
\end{rem}

The following corollary shows that for a fully faithful $\CQ$-functor $F:\bbA\lra\bbB$, both $(F^{\natural})^*$ and $(F_{\natural})_*$ can be regarded as extensions of $F$ \cite{Lawvere1973}.

\begin{cor}
\label{left_right_Kan_equivalent} If $F:\bbA\lra\bbB$ is a fully faithful $\CQ$-functor, then for all $\mu\in\PA$, it holds that $(F^{\natural})^*(\mu)\circ F_{\natural}=\mu$ and $(F_{\natural})_*(\mu)\circ F_{\natural}=\mu$.
\end{cor}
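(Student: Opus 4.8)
The plan is to derive both identities from a single structural fact, namely Proposition~\ref{fully_faithful_graph_cograph}(1), which tells us that a fully faithful $\CQ$-functor $F$ satisfies $F^{\natural}\circ F_{\natural}=\bbA$; everything else is bookkeeping of compositions and implications in $\CQ$-{\bf Dist}, together with the adjunctions already in hand.

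For the identity involving $(F^{\natural})^*$ I would simply unfold the definitions: $(F^{\natural})^*(\mu)=\mu\circ F^{\natural}$, so that, by associativity of composition in $\CQ$-{\bf Dist},
$$(F^{\natural})^*(\mu)\circ F_{\natural}=\mu\circ F^{\natural}\circ F_{\natural}=\mu\circ\bbA=\mu,$$
the middle step being exactly Proposition~\ref{fully_faithful_graph_cograph}(1). This requires no further idea.

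For the identity involving $(F_{\natural})_*$ I would prove the two inequalities separately. The inequality $(F_{\natural})_*(\mu)\circ F_{\natural}\leq\mu$ is precisely the counit of the adjunction $(F_{\natural})^*\dv(F_{\natural})_*$ of Proposition~\ref{phistar_adjoint}, since $(F_{\natural})^*\big((F_{\natural})_*(\mu)\big)=(\mu\lda F_{\natural})\circ F_{\natural}$ and the adjunction forces $(F_{\natural})^*\circ(F_{\natural})_*\leq 1_{\PA}$. For the reverse inequality I would exhibit $\mu\circ F^{\natural}$ as a witness: composition with $F_{\natural}$ is left adjoint to $(-)\lda F_{\natural}$ in $\CQ$-{\bf Dist}, and the computation $(\mu\circ F^{\natural})\circ F_{\natural}=\mu\circ(F^{\natural}\circ F_{\natural})=\mu\circ\bbA=\mu$ shows $\mu\circ F^{\natural}\leq\mu\lda F_{\natural}=(F_{\natural})_*(\mu)$; post-composing with $F_{\natural}$ and using Proposition~\ref{fully_faithful_graph_cograph}(1) once more yields $\mu=(\mu\circ F^{\natural})\circ F_{\natural}\leq(F_{\natural})_*(\mu)\circ F_{\natural}$. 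Combining the two inequalities gives the claim.

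The only point demanding care --- the ``obstacle'', such as it is --- is orientation: one must keep in mind that $F^{\natural}\colon\bbB\olra\bbA$ and $F_{\natural}\colon\bbA\olra\bbB$ run in opposite directions and that $\lda$ is the right adjoint of composition on the appropriate side; one should also note that, unlike in the Isbell setting, the underlying orders of $\PA$ and $\PB$ agree with the local order of $\CQ$-{\bf Dist} (Remark~\ref{PdA_QDist_order}), so the counit inequality of $(F_{\natural})^*\dv(F_{\natural})_*$ is used without any reversal. Conceptually, both identities simply say that, for fully faithful $F$, the $\CQ$-functors $(F^{\natural})^*$ and $(F_{\natural})_*$ are themselves fully faithful, being respectively a left-adjoint and a right-adjoint section of $F^{\la}=(F_{\natural})^*=(F^{\natural})_*$ (Theorem~\ref{why_kan}).
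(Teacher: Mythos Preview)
Your proof is correct. The first identity is handled identically to the paper. For the second, the paper takes a shorter route: instead of splitting into two inequalities, it invokes Proposition~\ref{graph_cograph_implication}(4) applied to the adjunction $F_{\natural}\dv F^{\natural}$ to obtain
\[
(F_{\natural})_*(\mu)\circ F_{\natural}=(\mu\lda F_{\natural})\circ F_{\natural}=\mu\lda(F^{\natural}\circ F_{\natural})=\mu\lda\bbA=\mu
\]
in a single chain. Your argument avoids that identity and works directly with the universal property of $\lda$ (the counit inequality $(\mu\lda F_{\natural})\circ F_{\natural}\leq\mu$ and the witness $\mu\circ F^{\natural}$ for the reverse), which is a perfectly valid and slightly more hands-on alternative; the paper's approach is more streamlined because it has already packaged exactly this kind of manipulation into Proposition~\ref{graph_cograph_implication}.
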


\begin{proof}
The first equality is a reformulation of Proposition \ref{fully_faithful_graph_cograph}(1). For the second equality,
\begin{align*}
(F_{\natural})_*(\mu)\circ F_{\natural}&=(\mu\lda F_{\natural})\circ F_{\natural}\\
&=\mu\lda(F^{\natural}\circ F_{\natural})&(\text{by Proposition \ref{graph_cograph_implication}(4)})\\
&=\mu\lda\bbA&(\text{by Proposition \ref{fully_faithful_graph_cograph}(1)})\\
&=\mu.
\end{align*}
This completes the proof.
\end{proof}

Adjunctions of the form $\phi^*\dv\phi_*:\PB\rhu\PA$ will be called Kan adjunctions by abuse of language. The following theorem states that all adjunctions between $\PB$ and $\PA$ are of this form.

\begin{thm} \label{Kan_distributor_bijection}
The correspondence $\phi\mapsto\phi^*$ is an isomorphism of posets
$$\CQ\text{-}{\bf Dist}(\bbA,\bbB)\cong\CQ\text{-}{\bf CCat}(\PB,\PA).$$
\end{thm}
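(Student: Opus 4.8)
The plan is to follow the proof of Theorem~\ref{Isbell_distributor_bijection} almost verbatim, with $\phi^*$ in place of $\uphi$ and the Yoneda embedding $\sY_{\bbB}$ in place of the co-Yoneda embedding. The candidate inverse of $\phi\mapsto\phi^*$ sends a left adjoint $\CQ$-functor $F:\PB\lra\PA$ to the $\CQ$-distributor $\ulc F\circ\sY_{\bbB}\urc:\bbA\olra\bbB$, $(x,y)\mapsto F(\sY_{\bbB}y)(x)$; this is a legitimate $\CQ$-distributor because $F\circ\sY_{\bbB}:\bbB\lra\PA$ is a $\CQ$-functor. I then have to verify: (i) both correspondences preserve the order; (ii) $F=(\ulc F\circ\sY_{\bbB}\urc)^*$ for every left adjoint $F:\PB\lra\PA$; and (iii) $\phi=\ulc\phi^*\circ\sY_{\bbB}\urc$ for every $\CQ$-distributor $\phi:\bbA\olra\bbB$. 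Since Proposition~\ref{phistar_adjoint} already guarantees that $\phi^*$ lies in $\CQ\text{-}{\bf CCat}(\PB,\PA)$, these three facts give the asserted poset isomorphism; fact (iii) is exactly Proposition~\ref{phi_star_Yoneda}.

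For (i): by Remark~\ref{PdA_QDist_order} the underlying order of $\PA$ coincides with the local order of $\CQ\text{-}{\bf Dist}$, and composition of $\CQ$-distributors is monotone in each variable, so $\phi\leq\psi$ forces $\lam\circ\phi\leq\lam\circ\psi$ for all $\lam\in\PB$, i.e.\ $\phi^*\leq\psi^*$ pointwise. In the other direction, if $F\leq G$ in $\CQ\text{-}{\bf CCat}(\PB,\PA)$ then, evaluating at the representables, $\ulc F\circ\sY_{\bbB}\urc(-,y)=F(\sY_{\bbB}y)\leq G(\sY_{\bbB}y)=\ulc G\circ\sY_{\bbB}\urc(-,y)$ for every $y\in\bbB_0$, whence $\ulc F\circ\sY_{\bbB}\urc\leq\ulc G\circ\sY_{\bbB}\urc$.

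The heart of the argument is (ii). Every $\lam\in\PB$ can be written as $\lam=\lam\circ\bbB=\bv_{y\in\bbB_0}\lam(y)\circ\sY_{\bbB}y$, and by Example~\ref{PA_tensor}(1) the term $\lam(y)\circ\sY_{\bbB}y$ is precisely the tensor $\lam(y)\otimes\sY_{\bbB}y$ in $\PB$; moreover this is a join over the single type $t\lam$ in the $\CQ$-category $\PB$, computed pointwise since, by Remark~\ref{PdA_QDist_order} and Example~\ref{PX_PA_complete}, joins in $(\PB)_{t\lam}$ are the pointwise joins of $\CQ\text{-}{\bf Dist}$. Because $\PB$ is complete and $F$ is a genuine left adjoint $\CQ$-functor, Proposition~\ref{F_la_ra_condition} applies, so $F$ preserves both tensors and arbitrary joins, giving
\[
F(\lam)=\bv_{y\in\bbB_0}\lam(y)\otimes_{\PA}F(\sY_{\bbB}y)=\bv_{y\in\bbB_0}\lam(y)\circ F(\sY_{\bbB}y),
\]
using Example~\ref{PA_tensor}(1) once more for the tensor in $\PA$. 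Evaluating at $x\in\bbA_0$ and again reading the join pointwise yields $F(\lam)(x)=\bv_{y\in\bbB_0}\lam(y)\circ\ulc F\circ\sY_{\bbB}\urc(x,y)=(\lam\circ\ulc F\circ\sY_{\bbB}\urc)(x)=(\ulc F\circ\sY_{\bbB}\urc)^*(\lam)(x)$, which is (ii).

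The only delicate step is (ii), and within it the bookkeeping: confirming that the canonical presentation $\lam=\bv_y\lam(y)\circ\sY_{\bbB}y$ really is a type-homogeneous join in $\PB$, that the relevant tensors in both $\PB$ and $\PA$ unwind to composition via Example~\ref{PA_tensor}(1), and — the genuinely structural input — that $F$, being an honest left adjoint in $\CQ\text{-}{\bf CCat}$ rather than merely order-reversing as in the Isbell situation, preserves these joins and tensors by Proposition~\ref{F_la_ra_condition}. Once these are in hand the computation is immediate and the rest of the argument is formal.
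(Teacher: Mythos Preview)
Your proposal is correct and follows essentially the same approach as the paper: the paper's own proof simply says the argument is analogous to Theorem~\ref{Isbell_distributor_bijection} with inverse $G\mapsto\ulc G\circ\sY_{\bbB}\urc$, and you have faithfully unfolded exactly those details, correctly noting that here the correspondence is covariant (no ``$\mathrm{co}$'') and that tensors in both $\PB$ and $\PA$ are given by composition rather than left implication.
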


\begin{proof}
The proof is similar to Theorem \ref{Isbell_distributor_bijection}. The correspondence $G\mapsto \ulc G\circ\sY_{\bbB}\urc$ is an inverse of the correspondence $\phi\mapsto\phi^*$.
\end{proof}

Theorem \ref{Kan_distributor_bijection} adds one more isomorphism of posets to (\ref{Isbell_isomorphism}):
\begin{align*}
\CQ\text{-}{\bf Dist}(\bbA,\bbB)&\cong\CQ\text{-}{\bf Cat}^{\rm co}(\bbA,\PdB)\cong\CQ\text{-}{\bf Cat}(\bbB,\PA)\\
&\cong\CQ\text{-}{\bf CCat}^{\rm co}(\PA,\PdB)\cong\CQ\text{-}{\bf CCat}(\PB,\PA).
\end{align*}

Since $\phi_*\circ\phi^*:\PB\lra\PB$ is a $\CQ$-closure operator for each $\CQ$-distributor $\phi:\bbA\olra\bbB$, it follows that $(\bbB,\phi_*\circ\phi^*)$ is a $\CQ$-closure space.

\begin{prop} \label{F_G_infomorhpism_F_phistar_continuous}
Let $(F,G):(\phi:\bbA\olra\bbB)\lra(\psi:\bbA'\olra\bbB')$ be an infomorphism. Then $G:(\bbB',\psi_*\circ\psi^*)\lra(\bbB,\phi_*\circ\phi^*)$ is a continuous $\CQ$-functor.
\end{prop}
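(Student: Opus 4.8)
The plan is to follow the template of Proposition~\ref{F_G_info_F_uphi_dphi_continuous}: by definition I must show that
$$G^{\ra}\circ\psi_*\circ\psi^*\leq\phi_*\circ\phi^*\circ G^{\ra}$$
as $\CQ$-functors $\PB'\lra\PB$, and I will do this by routing the middle column $\PA'\to\PA$ through $F^{\la}$. Concretely, consider the diagram
$$\bfig
\square|alrb|[\PB'`\PA'`\PB`\PA;\psi^*`G^{\ra}`F^{\la}`\phi^*]
\square(500,0)/>``>`>/[\PA'`\PB'`\PA`\PB;\psi_*``G^{\ra}`\phi_*]
\efig$$
It suffices to prove (a) that the left-hand square commutes, i.e.\ $F^{\la}\circ\psi^*=\phi^*\circ G^{\ra}$, and (b) that the right-hand square lax-commutes, i.e.\ $G^{\ra}\circ\psi_*\leq\phi_*\circ F^{\la}$; for then precomposing (b) with $\psi^*$ and substituting (a) gives $G^{\ra}\circ\psi_*\circ\psi^*\leq\phi_*\circ F^{\la}\circ\psi^*=\phi_*\circ\phi^*\circ G^{\ra}$, which is exactly the required continuity of $G:(\bbB',\psi_*\circ\psi^*)\lra(\bbB,\phi_*\circ\phi^*)$.

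For (a), I simply unwind the definitions of $\psi^*,\phi^*,F^{\la},G^{\ra}$ and use associativity of composition in $\CQ$-{\bf Dist}: for any $\lam\in\PB'$ one has $F^{\la}(\psi^*(\lam))=\lam\circ\psi\circ F_{\natural}$ and $\phi^*(G^{\ra}(\lam))=\lam\circ G^{\natural}\circ\phi$, and these coincide precisely because $(F,G):\phi\lra\psi$ is an infomorphism, i.e.\ $\psi\circ F_{\natural}=G^{\natural}\circ\phi$. (Testing against the representables $\sY_{\bbB'}y'$ shows the converse as well, so the left square commutes if and only if $(F,G)$ is an infomorphism; but only the stated direction is needed here.)

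For (b), fix $\nu\in\PA'$; since the underlying order of $\PB$ coincides with the local order of $\CQ$-{\bf Dist}, the inequality $G^{\ra}(\psi_*(\nu))\leq\phi_*(F^{\la}(\nu))$ is just the statement $(\nu\lda\psi)\circ G^{\natural}\leq(\nu\circ F_{\natural})\lda\phi$ in $\CQ$-{\bf Dist}. By the defining property of the left implication (that $-\lda\phi$ is right adjoint to $-\circ\phi$), this amounts to $\big((\nu\lda\psi)\circ G^{\natural}\big)\circ\phi\leq\nu\circ F_{\natural}$; and here the infomorphism identity $G^{\natural}\circ\phi=\psi\circ F_{\natural}$ rewrites the left-hand side as $(\nu\lda\psi)\circ\psi\circ F_{\natural}$, which is $\leq\nu\circ F_{\natural}$ because $(\nu\lda\psi)\circ\psi\leq\nu$ (the counit of that adjunction) and composition is monotone. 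Step (b) is the only place where a genuine inequality rather than an identity enters, so it is the technical heart of the proof; beyond that, the only thing to watch is the order-convention bookkeeping (keeping in mind that the underlying orders of $\PA$ and $\PB$ are the local order of $\CQ$-{\bf Dist}). I do not anticipate any real obstacle — everything is a short manipulation inside the composition/implication calculus of $\CQ$-{\bf Dist}, entirely parallel to Proposition~\ref{F_G_info_F_uphi_dphi_continuous}.
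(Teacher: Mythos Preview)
Your proposal is correct and follows essentially the same approach as the paper: the paper sets up exactly the same two-square diagram and asserts (leaving the verification to the reader) that the left square commutes if and only if $(F,G)$ is an infomorphism, and that $G^{\natural}\circ\phi\leq\psi\circ F_{\natural}$ implies $G^{\ra}\circ\psi_*\leq\phi_*\circ F^{\la}$. You have supplied precisely those verifications, and the only minor difference is that in step (b) you invoke the full infomorphism equality $G^{\natural}\circ\phi=\psi\circ F_{\natural}$ where the paper notes the one-sided inequality suffices---but of course the equality holds in your setting, so this is immaterial.
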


\begin{proof}
Consider the following diagram:
$$\bfig
\square|alrb|[\PB'`\PA'`\PB`\PA;\psi^*`G^{\ra}`F^{\la}`\phi^*]
\square(500,0)/>``>`>/[\PA'`\PB'`\PA`\PB;\psi_*``G^{\ra}`\phi_*]
\efig $$
One must show that $G^{\ra}\circ\psi_*\circ\psi^*\leq\phi_*\circ\phi^*\circ G^{\ra}$. We leave it to the reader to check that the left square  commutes if and only if $(F,G):\phi\lra\psi$ is an infomorphism and that if $G^{\natural}\circ\phi\leq\psi\circ F_{\natural}$ then $G^{\ra}\circ\psi_*\leq\phi_*\circ F^{\la}$.
\end{proof}

By virtue of Proposition \ref{F_G_infomorhpism_F_phistar_continuous} we obtain a functor $\CV:(\CQ\text{-}{\bf Info})^{\rm op}\lra\CQ\text{-}{\bf Cls}$ that sends an infomorphism
$$(F,G):(\phi:\bbA\olra\bbB)\lra(\psi:\bbA'\olra\bbB')$$
to a continuous $\CQ$-functor
$$G:(\bbB',\psi_*\circ\psi^*)\lra(\bbB,\phi_*\circ\phi^*).$$

The composition of
$$\CV:(\CQ\text{-}{\bf Info})^{\rm op}\lra\CQ\text{-}{\bf Cls}$$
and
$$\CT:\CQ{\text -}{\bf Cls}\lra(\CQ\text{-}{\bf CCat})_{\skel}$$
gives a  functor
$$\CK=\CT\circ\CV:(\CQ\text{-}{\bf Info})^{\rm op}\lra(\CQ\text{-}{\bf CCat})_{\skel}$$
that sends each $\CQ$-distributor $\phi:\bbA\olra\bbB$ to the complete $\CQ$-category $\phi_*\circ\phi^*(\PB)$.

The following conclusion asserts that the free cocompletion functor of $\CQ$-categories  factors through  $\CK$.

\begin{prop} \label{K_Yoneda_PA}
If $F:\bbA\lra\bbB$ is a fully faithful $\CQ$-functor, then $\CK(F^{\natural})=\PA$. In particular, the diagram
$$\bfig
\qtriangle<700,500>[\CQ\text{-}{\bf Cat}`(\CQ\text{-}{\bf Info})^{\rm op}`\CQ\text{-}{\bf CCat};\BY^{\dag}`\CP`\CK]
\efig$$
commutes.
\end{prop}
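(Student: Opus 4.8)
The plan is to reduce both assertions to the single fact that, when $F:\bbA\lra\bbB$ is fully faithful, the Kan closure operator $(F^{\natural})_*\circ(F^{\natural})^*$ attached to the cograph $F^{\natural}:\bbB\olra\bbA$ is the identity on $\PA$. Unwinding $\CK=\CT\circ\CV$ on the object $F^{\natural}$: by definition $\CV(F^{\natural})$ is the $\CQ$-closure space $(\bbA,(F^{\natural})_*\circ(F^{\natural})^*)$ — here one must note that the monad $\phi_*\circ\phi^*$ of a distributor $\phi$ sits on $\CP$ of its codomain, and $F^{\natural}$ has codomain $\bbA$ — and $\CK(F^{\natural})=\CT\circ\CV(F^{\natural})$ is the complete $\CQ$-subcategory of $\PA$ of fixed points of this operator. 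Hence $\CK(F^{\natural})=\PA$ is equivalent to $(F^{\natural})_*\circ(F^{\natural})^*=1_{\PA}$.

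To prove that identity, I would compute, for $\mu\in\PA$, that $(F^{\natural})^*(\mu)=\mu\circ F^{\natural}$ and then $(F^{\natural})_*(\mu\circ F^{\natural})=(\mu\circ F^{\natural})\lda F^{\natural}$. Applying Proposition \ref{graph_cograph_implication}(1) to the adjunction $F_{\natural}\dv F^{\natural}$ in the form $h\circ F_{\natural}=h\lda F^{\natural}$, this equals $\mu\circ F^{\natural}\circ F_{\natural}$; since $F$ is fully faithful, Proposition \ref{fully_faithful_graph_cograph}(1) gives $F^{\natural}\circ F_{\natural}=\bbA$, so the whole expression collapses to $\mu\circ\bbA=\mu$. (Alternatively one may quote Corollary \ref{left_right_Kan_equivalent} together with the identification $(F^{\natural})_*=F^{\la}$ from Theorem \ref{why_kan}, which gives the same conclusion directly.) Thus $(F^{\natural})_*\circ(F^{\natural})^*=1_{\PA}$ and $\CK(F^{\natural})=1_{\PA}(\PA)=\PA$.

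For the triangle, recall that $\BY^{\dag}$ sends a $\CQ$-functor $F:\bbA\lra\bbB$ to the infomorphism $(F^{\La},F):((\sYd_{\bbB})^{\natural})\lra((\sYd_{\bbA})^{\natural})$, regarded in $(\CQ\text{-}{\bf Info})^{\op}$ as an arrow $\BY^{\dag}(\bbA)\lra\BY^{\dag}(\bbB)$, so $\BY^{\dag}(\bbA)=(\sYd_{\bbA})^{\natural}$. Since the co-Yoneda embedding $\sYd_{\bbA}:\bbA\lra\PdA$ is fully faithful, the first part applies to $\phi=(\sYd_{\bbA})^{\natural}$, giving $\CV((\sYd_{\bbA})^{\natural})=(\bbA,1_{\PA})$ and hence on objects $\CK\circ\BY^{\dag}(\bbA)=\PA=\CP(\bbA)$. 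On arrows, $\CV$ extracts from $(F^{\La},F)$ the second component $F$, yielding the (trivially continuous) $\CQ$-functor $F:(\bbA,1_{\PA})\lra(\bbB,1_{\PB})$, and $\CT$ then sends this to $F^{\triangleright}=1_{\PB}\circ F^{\ra}=F^{\ra}=\CP(F)$. Therefore $\CK\circ\BY^{\dag}=\CP$ and the diagram commutes.

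There is no serious obstacle here: the only point demanding care is the variance bookkeeping — keeping straight that $F^{\natural}$ runs $\bbB\olra\bbA$, that the Kan monad of a distributor lives on $\CP$ of its codomain, and that $\CV$ (hence $\CK$) is contravariant on $\CQ\text{-}{\bf Info}$. The sole substantive input is fully-faithfulness of $F$, used exactly once through $F^{\natural}\circ F_{\natural}=\bbA$; the rest is definition-chasing.
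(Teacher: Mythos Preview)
Your proof is correct and follows essentially the same approach as the paper. The paper computes $(F^{\natural})_*\circ(F^{\natural})^*(\mu)=(F_{\natural})^*\circ(F^{\natural})^*(\mu)=(F^{\natural}\circ F_{\natural})^*(\mu)=\bbA^*(\mu)=\mu$ via Theorem \ref{why_kan}, which is precisely the alternative route you mention; your primary computation via Proposition \ref{graph_cograph_implication}(1) is the same calculation unwound one step further, and your treatment of the triangle is more detailed than the paper's one-line ``easy to verify''.
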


\begin{proof}
In order to see that $\CK(F^{\natural})=(F^{\natural})_*\circ(F^{\natural})^*(\PA)=\PA$, it suffices to check that $(F^{\natural})_*\circ(F^{\natural})^*(\mu)=\mu$ for all $\mu\in\PA$. Indeed,
\begin{align*}
(F^{\natural})_*\circ(F^{\natural})^*(\mu)&=(F_{\natural})^*\circ(F^{\natural})^*(\mu)&(\text{by Theorem\ \ref{why_kan}})\\
&=(F^{\natural}\circ F_{\natural})^*(\mu)\\
&=\bbA^*(\mu)&(\text{by Proposition \ref{fully_faithful_graph_cograph}(1)})\\
&=\mu.
\end{align*}

Furthermore, it is easy to verify that $\CK\circ\BY^{\dag}(G)=G^\ra=\CP(G)$ for each $\CQ$-functor $G:\bbA\lra\bbB$. Thus, the conclusion follows.
\end{proof}

Theorem \ref{complete_category_fca} shows that every skeletal complete $\CQ$-category is of the form $\CM(\phi)$.  It is natural to ask whether every complete $\CQ$-category can be written of the form $\CK(\phi)$ for some $\CQ$-distributor $\phi$. A little surprisingly, this is not true in general. This fact was pointed out in \cite{Lai2009695} in the case that $\CQ$ is a unital commutative quantale. However, the answer is positive for a special kind of quantaloids.

Let $\FD=\{d_A:A\lra A\mid A\in\CQ_0\}$ be a family of morphisms in a quantaloid $\CQ$. $\FD$ is called a {\it cyclic family} \cite{Rosenthal1996} if $d_A\lda f=f\rda d_B$ for all $f\in\CQ(A,B)$. $\FD$ is called a {\it dualizing family} \cite{Rosenthal1996} if $(d_A\lda f)\rda d_A=f=d_B\lda(f\rda d_B)$ for all $f\in\CQ(A,B)$.

A {\it Girard quantaloid} \cite{Rosenthal1996} is a quantaloid with a cyclic dualizing family $\FD$ of morphisms.

\begin{prop} {\rm\cite{Rosenthal1996}} \label{Girard_quantaloid_properties}
Suppose $\CQ$ has a dualizing family
$$\FD=\{d_A:A\lra A\mid A\in\CQ_0\}.$$
Then for all $\CQ$-arrows $f,f_t:A\lra B$, $g:B\lra C$, $h:A\lra C$:
\begin{itemize}
\item[\rm (1)] $g\circ f=d_C\lda(f\rda(g\rda d_C))=((d_A\lda f)\lda g)\rda d_A$.
\item[\rm (2)] $(h\lda f)\rda d_C=f\circ(h\rda d_C)$, $d_A\lda(g\rda h)=(d_A\lda h)\circ g$.
\item[\rm (3)] $(d_B\lda g)\rda f=g\lda (f\rda d_B)$.
\end{itemize}
\end{prop}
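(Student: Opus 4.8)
The plan is to derive all three items by bootstrapping from two kinds of elementary facts. From the quantaloid adjunctions $-\circ f\dv -\lda f$ and $g\circ -\dv g\rda -$ together with uniqueness of adjoints one gets, for all composable $\CQ$-arrows, the associativity-type identities
\[
(g\circ f)\rda h=f\rda(g\rda h),\qquad h\lda(g\circ f)=(h\lda f)\lda g,
\]
these being instances of the principle that a composite of right adjoints is the right adjoint of the composite of left adjoints. The dualizing hypothesis says precisely that for every $f\colon A\lra B$ one has $(d_A\lda f)\rda d_A=f$ and $d_B\lda(f\rda d_B)=f$; equivalently, $(-)\rda d$ and $d\lda(-)$ are mutually inverse, order-reversing bijections between $\CQ(A,B)$ and $\CQ(B,A)$. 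I will use these two facts repeatedly, keeping track of which object each $d$ lives over.

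First I prove (1). For the first equality, substitute $h=d_C$ into $(g\circ f)\rda h=f\rda(g\rda h)$ to get $(g\circ f)\rda d_C=f\rda(g\rda d_C)$, then apply $d_C\lda(-)$ to both sides and invoke $d_C\lda\bigl((g\circ f)\rda d_C\bigr)=g\circ f$ (the dualizing identity for the arrow $g\circ f\colon A\lra C$). For the second equality, substitute $h=d_A$ into $h\lda(g\circ f)=(h\lda f)\lda g$ to get $d_A\lda(g\circ f)=(d_A\lda f)\lda g$, then apply $(-)\rda d_A$ and use $\bigl(d_A\lda(g\circ f)\bigr)\rda d_A=g\circ f$.

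Next, both identities in (2) are read off from (1). For $(h\lda f)\rda d_C=f\circ(h\rda d_C)$, apply the second identity of (1) to the composite $f\circ(h\rda d_C)$, whose left factor $f$ has domain $A$ and whose right factor $h\rda d_C$ has domain $C$; this gives $f\circ(h\rda d_C)=\bigl((d_C\lda(h\rda d_C))\lda f\bigr)\rda d_C$, and $d_C\lda(h\rda d_C)=h$ by the dualizing identity. Symmetrically, for $d_A\lda(g\rda h)=(d_A\lda h)\circ g$, apply the first identity of (1) to the composite $(d_A\lda h)\circ g$ (left factor $d_A\lda h$, of codomain $A$), obtaining $(d_A\lda h)\circ g=d_A\lda\bigl(g\rda((d_A\lda h)\rda d_A)\bigr)$, and $(d_A\lda h)\rda d_A=h$.

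Finally, for (3) I fix an arbitrary $k\colon A\lra C$ and show that $k\le(d_B\lda g)\rda f$ iff $k\le g\lda(f\rda d_B)$. Using the adjunctions together with the order-reversing bijection $(-)\rda d_B$ (whose inverse is $d_B\lda(-)$, so in particular $g=(d_B\lda g)\rda d_B$), the left condition is equivalent successively to $(d_B\lda g)\circ k\le f$, to $d_B\lda g\le f\lda k$, and to $(f\lda k)\rda d_B\le g$; by the first identity of (2) this last reads $k\circ(f\rda d_B)\le g$, which by the adjunction $-\circ(f\rda d_B)\dv -\lda(f\rda d_B)$ is exactly $k\le g\lda(f\rda d_B)$. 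Since $k$ was arbitrary, the two arrows coincide. I do not anticipate a genuine obstacle here: the whole argument is a chain of adjunction rewrites and applications of the dualizing identity, and the only point requiring real care is type bookkeeping — ensuring each $d$ is indexed by the correct object so that every identity is invoked in the hom-lattice where it holds, and tracking the order reversals introduced by $d\lda(-)$ and $(-)\rda d$. (The symbol $f_t$ in the statement plays no role in (1)--(3).)
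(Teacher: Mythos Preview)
Your proof is correct. The paper does not supply its own proof of this proposition; it is quoted from \cite{Rosenthal1996} and stated without argument, so there is nothing to compare against. Your approach---deriving (1) directly from the associativity-type identities $(g\circ f)\rda h=f\rda(g\rda h)$ and $h\lda(g\circ f)=(h\lda f)\lda g$ together with the dualizing identities, then reading (2) off from (1) by specializing one factor to the form $d\lda(-)$ or $(-)\rda d$, and finally establishing (3) via a Yoneda-style chain of adjunction equivalences---is sound, and your type bookkeeping checks out (in particular the step $(f\lda k)\rda d_B=k\circ(f\rda d_B)$ is a legitimate instance of the first identity of (2), and the passage from $d_B\lda g\le f\lda k$ to $(f\lda k)\rda d_B\le g$ and back uses exactly that $(-)\rda d_B$ and $d_B\lda(-)$ are inverse order-reversing bijections). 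Your remark that $f_t$ is unused is also correct.
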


Let $\CQ$  be a Girard quantaloid with a cyclic dualizing family
$$\FD=\{d_A:A\lra A\mid A\in\CQ_0\}.$$
For all $f\in\CQ(A,B)$, let
$$\neg f=d_A\lda f=f\rda d_B:B\lra A.$$
Then $\neg\neg f=f$ since $\FD$ is a dualizing family.
For each $\CQ$-category $\bbA$, set
$$(\neg\bbA)(y,x)=\neg\bbA(x,y)$$
for all $x,y\in\bbA_0$. It is easy to verify that $\neg\bbA:\bbA\olra\bbA$ is a $\CQ$-distributor and
$$\FD'=\{\neg\bbA:\bbA\olra\bbA \mid \bbA\in\CQ\text{-}{\bf Dist}\}$$
is a cyclic dualizing family of $\CQ$-{\bf Dist}. Thus
\begin{prop} {\rm\cite{Rosenthal1996}} \label{DistQ_Girard}
If $\CQ$ is a Girard quantaloid, then $\CQ$-{\bf Dist} is a Girard quantaloid.
\end{prop}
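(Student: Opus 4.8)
The plan is to confirm that the family $\FD'=\{\neg\bbA:\bbA\olra\bbA\mid\bbA\in\CQ\text{-}{\bf Dist}\}$ introduced right before the statement is indeed cyclic and dualizing in $\CQ$-{\bf Dist}. The first thing I would record is the "complement" operation on arbitrary distributors: for $\phi:\bbA\olra\bbB$, set $(\neg\phi)(y,x)=\neg(\phi(x,y))$. Since $\neg:\CQ(A,B)\to\CQ(B,A)$ is an order-reversing bijection --- antitone because a right implication is antitone in its lower argument, involutive because $\FD$ is dualizing --- and since Proposition \ref{Girard_quantaloid_properties}(1) yields the identity $\neg(g\circ f)=f\rda\neg g$ (equivalently $g\circ f=\neg(f\rda\neg g)$), a short computation feeding this identity through the adjunction $f\circ(-)\dv f\rda(-)$ shows that the two distributor axioms for $\neg\phi$ are exactly the two composition axioms of $\bbA$ and $\bbB$ read off for $\phi$. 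In particular $\neg\bbA$ is a genuine $\CQ$-distributor, and $\neg\neg\phi=\phi$ holds pointwise. So only the cyclic and dualizing conditions for $\FD'$ remain.

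The heart of the argument is the identity
$$(\neg\bbA)\lda\phi=\neg\phi=\phi\rda(\neg\bbB)\qquad(\phi:\bbA\olra\bbB),$$
whose first equality, rewritten as $(\neg\bbA)\lda\phi=\phi\rda(\neg\bbB)$, is precisely the cyclicity $d_{\bbA}\lda\phi=\phi\rda d_{\bbB}$. To prove $(\neg\bbA)\lda\phi=\neg\phi$ I would use that, by definition of the implication in the quantaloid $\CQ$-{\bf Dist}, $(\neg\bbA)\lda\phi$ is the largest distributor $\chi:\bbB\olra\bbA$ with $\chi\circ\phi\leq\neg\bbA$. For "$\neg\phi\leq(\neg\bbA)\lda\phi$" one checks $\neg\phi\circ\phi\leq\neg\bbA$: applying $\neg$ to each summand and using $\neg(g\circ f)=f\rda\neg g$ reduces this to $\phi(x,y)\circ\bbA(x',x)\leq\phi(x',y)$, i.e. to distributor axiom (2) of $\phi$. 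For the reverse inequality one evaluates $((\neg\bbA)\lda\phi)\circ\phi\leq\neg\bbA$ on the diagonal of $\bbA$; since $\bbA(x,x)\geq 1_{tx}$ and $\neg 1_{tx}=d_{tx}$, the single term indexed by $y$ gives $((\neg\bbA)\lda\phi)(y,x)\circ\phi(x,y)\leq d_{tx}$, and the adjunction $(-)\circ\phi(x,y)\dv(-)\lda\phi(x,y)$ then forces $((\neg\bbA)\lda\phi)(y,x)\leq d_{tx}\lda\phi(x,y)=\neg(\phi(x,y))$. The equality $\phi\rda(\neg\bbB)=\neg\phi$ is symmetric, using axiom (1) of $\phi$ and the unit axiom of $\bbB$.

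Given this identity the dualizing condition drops out formally. Applying the identity to $\neg\phi:\bbB\olra\bbA$ in place of $\phi$ gives $(\neg\bbB)\lda(\neg\phi)=\neg(\neg\phi)=\phi$ and $(\neg\phi)\rda(\neg\bbA)=\neg(\neg\phi)=\phi$; substituting $(\neg\bbA)\lda\phi$ and $\phi\rda(\neg\bbB)$ for $\neg\phi$ yields $((\neg\bbA)\lda\phi)\rda(\neg\bbA)=\phi=(\neg\bbB)\lda(\phi\rda(\neg\bbB))$, which is the dualizing condition for $\FD'$. Hence $\FD'$ is a cyclic dualizing family and $\CQ$-{\bf Dist} is a Girard quantaloid. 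I expect the main obstacle to be the displayed identity of the second paragraph: the mathematics is elementary, but one must keep careful track of the variance of $\lda$ and $\rda$ and of the types of all the $\CQ$-arrows, and must remember that the "easy" halves of its two inequalities rest on the unit axioms of $\bbA$ and $\bbB$, not merely on the distributor axioms of $\phi$.
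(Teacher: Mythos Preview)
Your proposal is correct and follows exactly the approach indicated in the paper: the paper simply asserts (citing \cite{Rosenthal1996}) that $\FD'=\{\neg\bbA\}$ is a cyclic dualizing family in $\CQ$-{\bf Dist} and omits the verification, while you supply precisely that verification via the key identity $(\neg\bbA)\lda\phi=\neg\phi=\phi\rda(\neg\bbB)$. There is nothing to correct or compare.
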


Therefore, by assigning $\neg\phi=\neg\bbA\lda\phi=\phi\rda\neg\bbB$ for each $\CQ$-distributor $\phi:\bbA\olra\bbB$, we obtain a  functor $\neg:\CQ\text{-}{\bf Info}\lra(\CQ\text{-}{\bf Info})^{\rm op}$ that sends an infomorphism
$$(F,G):(\phi:\bbA\olra\bbB)\lra(\psi:\bbA'\olra\bbB')$$
to
$$(G,F):(\neg\psi:\bbB'\olra\bbA')\lra(\neg\phi:\bbB\olra\bbA).$$

It is clear that $\neg\circ\neg=1_{\CQ\text{-}{\bf Info}}$. We leave it to the reader to check that $(\neg\phi)(y,x)=\neg\phi(x,y)$ for any distributor $\phi:\bbA\olra\bbB$ and $x\in\bbA_0,y\in\bbB_0$.

\begin{lem} \label{V_G_id}
Suppose $\CQ$ is a Girard quantaloid. Then for any $\CQ$-distributor $\phi:\bbA\olra\bbB$, it holds that $\phi^*=\neg\circ(\neg\phi)_{\ua}$ and $\phi_*=(\neg\phi)^{\da}\circ\neg$.
\end{lem}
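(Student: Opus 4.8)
The plan is to prove both equalities by a direct pointwise computation, reducing the identity at each element of the underlying $\CQ$-typed sets to the implication identities of Proposition \ref{Girard_quantaloid_properties}. First I would pin down the types: since $\neg\phi:\bbB\olra\bbA$, Proposition \ref{uphi-dphi-adjunction} applied to $\neg\phi$ yields $\CQ$-functors $(\neg\phi)_{\ua}:\PB\lra\PdA$ and $(\neg\phi)^{\da}:\PdA\lra\PB$, while $\neg$, being the restriction to presheaves of the involution on $\CQ$-{\bf Dist}, exchanges $\PA$ with $\PdA$; hence $\neg\circ(\neg\phi)_{\ua}:\PB\lra\PA$ and $(\neg\phi)^{\da}\circ\neg:\PA\lra\PB$ have the same source and target as $\phi^*$ and $\phi_*$ respectively. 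Recall also that $\neg$ on $\CQ$-{\bf Dist} is computed pointwise by $(\neg\psi)(y,x)=\neg\psi(x,y)$, that $\neg f=d_A\lda f=f\rda d_B$ for $f\in\CQ(A,B)$, and that $\neg\neg=1$ because the family is dualizing; in particular $\neg$ converts meets into joins in every hom-lattice.

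For the first equality, fix $\lam\in\PB$ and $a\in\bbA_0$. Unwinding the left implication of $\CQ$-distributors and using $(\neg\phi)(y,a)=\neg\phi(a,y)$ (noted before the lemma) gives
$$\bigl(\neg\circ(\neg\phi)_{\ua}(\lam)\bigr)(a)=\neg\Bigl[\bw_{y\in\bbB_0}\bigl(\neg\phi(a,y)\bigr)\lda\lam(y)\Bigr]=\bv_{y\in\bbB_0}\neg\bigl[\bigl(\neg\phi(a,y)\bigr)\lda\lam(y)\bigr].$$
For each $y$, Proposition \ref{Girard_quantaloid_properties}(1) (after substituting $\neg f=d\lda f$ and $\neg r=r\rda d$) gives $\neg\bigl[(\neg\phi(a,y))\lda\lam(y)\bigr]=\lam(y)\circ\phi(a,y)$, so the join over $\bbB_0$ equals $(\lam\circ\phi)(a)=\phi^*(\lam)(a)$. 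Hence $\phi^*=\neg\circ(\neg\phi)_{\ua}$.

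The second equality is proved dually. Fix $\mu\in\PA$ and $y\in\bbB_0$; unwinding the right implication of $\CQ$-distributors gives
$$\bigl((\neg\phi)^{\da}\circ\neg(\mu)\bigr)(y)=\bw_{a\in\bbA_0}\bigl(\neg\mu(a)\bigr)\rda\bigl(\neg\phi(a,y)\bigr).$$
Applying the identity $(d\lda g)\rda f=g\lda(f\rda d)$ of Proposition \ref{Girard_quantaloid_properties}(3) with $d\lda g=\neg\mu(a)$ and $f=\neg\phi(a,y)$, and using $\neg\neg=1$, turns each entry into $\mu(a)\lda\phi(a,y)$, so the meet equals $(\mu\lda\phi)(y)=\phi_*(\mu)(y)$; thus $\phi_*=(\neg\phi)^{\da}\circ\neg$.

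The only real difficulty is bookkeeping: at every step one must track which of $\lda$, $\rda$ is in play and over which pair of objects, so that the correct clause of Proposition \ref{Girard_quantaloid_properties} is invoked; once the objects are pinned down each step is a one-line identity. Should a less computational argument be preferred, one could instead observe that $\neg\circ(\neg\phi)_{\ua}$ is a composite of left adjoints (use Proposition \ref{uphi-dphi-adjunction} for $\neg\phi$, and that an isomorphism is a left adjoint), hence a left adjoint $\PB\lra\PA$, so by Theorem \ref{Kan_distributor_bijection} it equals $\bigl(\ulc\neg\circ(\neg\phi)_{\ua}\circ\sY_{\bbB}\urc\bigr)^{*}$; evaluating that distributor via Proposition \ref{uphi_dphi_Yoneda} applied to $\neg\phi$ and then $\neg\neg=1$ shows it is $\phi$, so $\neg\circ(\neg\phi)_{\ua}=\phi^*$; and then $(\neg\phi)^{\da}\circ\neg=\phi_*$ is forced by uniqueness of right adjoints, since $\phi^*\dv\phi_*$ by Proposition \ref{phistar_adjoint} and $(\neg\phi)_{\ua}\dv(\neg\phi)^{\da}$ by Proposition \ref{uphi-dphi-adjunction}.
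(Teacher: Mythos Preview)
Your proof is correct. The pointwise computations are accurate: the use of Proposition \ref{Girard_quantaloid_properties}(1) in the form $g\circ f=((d_A\lda f)\lda g)\rda d_A$ to obtain $\lam(y)\circ\phi(a,y)=\neg\bigl[\neg\phi(a,y)\lda\lam(y)\bigr]$, and of Proposition \ref{Girard_quantaloid_properties}(3) to obtain $(\neg\mu(a))\rda(\neg\phi(a,y))=\mu(a)\lda\phi(a,y)$, are both valid once the types are tracked as you do.

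The paper argues differently, and a bit more slickly: it exploits Proposition \ref{DistQ_Girard}, which says that $\CQ$-{\bf Dist} is itself a Girard quantaloid, and then applies Proposition \ref{Girard_quantaloid_properties} directly at the level of distributors rather than pointwise in $\CQ$. Concretely, for the first identity it writes $\phi=\neg\neg\phi=\neg\phi\rda\neg\bbA$ and then uses Proposition \ref{Girard_quantaloid_properties}(2) in $\CQ$-{\bf Dist} to turn $\lam\circ(\neg\phi\rda\neg\bbA)$ into $(\neg\phi\lda\lam)\rda\neg\bbA=\neg\circ(\neg\phi)_\ua(\lam)$; the second identity is handled analogously via Proposition \ref{Girard_quantaloid_properties}(3). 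This avoids the indexed meets/joins and the step where you convert $\neg$ of a meet into a join. What your approach buys is that it never needs Proposition \ref{DistQ_Girard} and keeps everything inside the base quantaloid; what the paper's approach buys is brevity and a uniform use of the Girard calculus one level up. Your alternative sketch via Theorem \ref{Kan_distributor_bijection} and uniqueness of adjoints is also sound and gives yet a third route.
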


\begin{proof}
For all $\lam\in\PB$ and  $\mu\in\PA$, we have
\begin{align*}
\phi^*(\lam)&=\lam\circ\phi\\
&=\lam\circ(\neg\phi\rda\neg\bbA)&(\text{by Proposition \ref{DistQ_Girard}})\\
&=(\neg\phi\lda\lam)\rda\neg\bbA&(\text{by Proposition \ref{Girard_quantaloid_properties}(2)})\\
&=\neg\circ(\neg\phi)_{\ua}(\lam)
\end{align*}
and
\begin{align*}
\phi_*(\mu)&=\mu\lda\phi\\
&=\mu\lda(\neg\phi\rda\neg\bbA)&(\text{by Proposition \ref{DistQ_Girard}})\\
&=(\neg\bbA\lda\mu)\rda\neg\phi&(\text{by Proposition \ref{Girard_quantaloid_properties}(3)})\\
&=\neg\mu\rda\neg\phi&(\text{by Proposition \ref{DistQ_Girard}})\\
&=(\neg\phi)^{\da}\circ\neg\mu.
\end{align*}
The conclusion thus follows.
\end{proof}

\begin{prop} \label{G_V_adjunction}
Suppose $\CQ$ is a Girard quantaloid. Then  $\CV=\CU\circ\neg$ and it has a left adjoint right inverse given by
$$\CG=\neg\circ\CF:\CQ\text{-}{\bf Cls}\lra(\CQ\text{-}{\bf Info})^{\rm op}.$$
Therefore, every skeletal complete $\CQ$-category is isomorphic to $\CK(\phi)$ for some $\CQ$-distributor $\phi$.
\end{prop}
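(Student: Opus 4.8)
The plan is to derive all three assertions formally from Lemma~\ref{V_G_id} and the properties of $\CU$ and $\CF$ established in Theorem~\ref{F_U_adjunction}, exploiting that $\neg\circ\neg=1_{\CQ\text{-}{\bf Info}}$, so that $\neg$ and its reverse are mutually inverse isomorphisms of categories. First I would check that $\CV=\CU\circ\neg$, where $\neg$ is now read as a functor $(\CQ\text{-}{\bf Info})^{\rm op}\lra\CQ\text{-}{\bf Info}$. On objects: for a $\CQ$-distributor $\phi:\bbA\olra\bbB$ the functor $\CV$ produces $(\bbB,\phi_*\circ\phi^*)$, while $\CU\circ\neg$ produces $(\bbB,(\neg\phi)^{\da}\circ(\neg\phi)_{\ua})$, and by Lemma~\ref{V_G_id} together with $\neg\circ\neg=1$ we get $\phi_*\circ\phi^*=(\neg\phi)^{\da}\circ\neg\circ\neg\circ(\neg\phi)_{\ua}=(\neg\phi)^{\da}\circ(\neg\phi)_{\ua}$. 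On morphisms: an infomorphism $(F,G):\phi\lra\psi$ is sent by $\CV$ to $G$ and by $\neg$ to $(G,F):\neg\psi\lra\neg\phi$, which $\CU$ sends again to $G$; by the object identity just proved the source and target $\CQ$-closure spaces also coincide. Hence $\CV=\CU\circ\neg$ as functors.

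Granting this, the remaining assertions are formal. Since $\CF$ is a right inverse of $\CU$ (Theorem~\ref{F_U_adjunction}),
\[
\CV\circ\CG=\CU\circ\neg\circ\neg\circ\CF=\CU\circ\CF={\bf id}_{\CQ\text{-}{\bf Cls}},
\]
so $\CG=\neg\circ\CF$ is a right inverse of $\CV$. For the adjunction, $\neg$ being an isomorphism of categories is simultaneously a left and a right adjoint of its inverse; composing the adjunction $\CF\dv\CU$ of Theorem~\ref{F_U_adjunction} with this adjoint pair yields $\CG=\neg\circ\CF\dv\CU\circ\neg=\CV$, and the unit of $\CG\dv\CV$ is the identity, inherited from $\CU\circ\CF={\bf id}$.

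Finally, for the last statement I would mimic the proof of Theorem~\ref{complete_category_fca}. Given a skeletal complete $\CQ$-category $\bbX$, put $\phi=\CG\circ\CD(\bbX)$, i.e. the $\CQ$-distributor $\neg\zeta_{C_{\bbX}}:C_{\bbX}(\PX)\olra\bbX$. Using $\CK=\CT\circ\CV$ and $\CV\circ\CG={\bf id}_{\CQ\text{-}{\bf Cls}}$,
\[
\CK(\phi)=\CT\circ\CV\circ\CG\circ\CD(\bbX)=\CT\circ\CD(\bbX)\cong\bbX,
\]
the last isomorphism holding because $\CT$ is a left inverse of $\CD$ up to isomorphism (Theorem~\ref{T_D_adjunction}); hence $\CK(\phi)\cong\bbX$. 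The whole argument is formal, the one point requiring care being the bookkeeping with the opposite category: one must keep straight that $\neg:\CQ\text{-}{\bf Info}\lra(\CQ\text{-}{\bf Info})^{\rm op}$ and its reverse are mutually inverse adjoint isomorphisms, so that the composite adjunction $\CG\dv\CV$ has the right variance. Everything of substance has already been absorbed into Lemma~\ref{V_G_id} and Theorems~\ref{F_U_adjunction} and \ref{T_D_adjunction}.
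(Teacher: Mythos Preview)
Your proposal is correct and follows essentially the same approach as the paper, which simply states that the result is an immediate consequence of Theorem~\ref{F_U_adjunction} and Lemma~\ref{V_G_id}; you have merely spelled out the formal details (the object and morphism checks for $\CV=\CU\circ\neg$, the transport of the adjunction along the isomorphism $\neg$, and the derivation of the final statement via $\CK=\CT\circ\CV$ and Theorem~\ref{T_D_adjunction}). One small point of bookkeeping: in your computation $\phi_*\circ\phi^*=(\neg\phi)^{\da}\circ\neg\circ\neg\circ(\neg\phi)_{\ua}$, the cancellation $\neg\circ\neg=1$ being used is the involutivity of $\neg$ on $\CQ$-distributors (from Proposition~\ref{DistQ_Girard}), not the functorial identity $\neg\circ\neg=1_{\CQ\text{-}{\bf Info}}$; both hold, but it is the former that is relevant at that step.
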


\begin{proof}
This is an immediate consequence of Theorem \ref{F_U_adjunction} and Lemma \ref{V_G_id}.
\end{proof}

\section{Concluding remarks and questions}

Isbell adjunctions and Kan extensions are fundamental constructions in category theory, both of them can be viewed as adjunctions between categories of (contravariant) functors. This paper investigates the functoriality of these constructions in a special setting: categories enriched over a small quantaloid $\CQ$. To this end, infomorphisms (an extension of adjunctions between categories) are introduced to play the role of morphisms between distributors. It is shown that each distributor between categories enriched over a small quantaloid  gives rise to two adjunctions (which are respectively generalizations of Isbell adjunctions and Kan extensions), hence to two monads; and that these two processes are functorial from the category of distributors and  infomorphisms to the category of complete $\CQ$-categories and left adjoints.

This paper is a first step (in a very special setting) to the functoriality of the constructions of Isbell adjunctions and Kan extensions, many things remain to be discovered. We end this paper with two questions.

The definition of infomorphisms is meaningful for distributors between  small categories. The first question is:
Is it possible to establish similar results for distributors between small categories?

The infomorphisms between distributors introduced here can be composed vertically, but not horizontally. So, the second question is:
Is it possible to find a certain kind of morphisms between distributors that can be composed in both directions and behave in a nice way with respect to the construction of Kan extension and Isbell adjunction?

\end{document}